\def\nM{\mathsf{M}}
\def\nN{\mathsf{N}}
\def\nH{\mathsf{H}}
\newcommand{\conn}[1]{\nabla_{#1}}
\newcommand{\cann}[1]{\Delta_{#1}}
\newcommand{\Sl}{\mathfrak{sl}_2}
\newcommand{\subs}[1]{#1_\star}
\newcommand{\mult}[1]{#1}
\newcommand{{\E}}{{\mathsf E}}
\newcommand{\im}{{\rm im\ }}
\newcommand{\0}{\mathbf{0}}
\newcommand{\nv}{\mathsf{v}}
\newcommand{\nw}{\mathsf{w}}
\newtheorem{thm}{Theorem}[section]
\newtheorem{cor}[thm]{Corollary}
\newtheorem{lem}[thm]{Lemma}
\theoremstyle{definition}
\newtheorem{defn}[thm]{Definition}
\newtheorem{rem}[thm]{Remark}
\newtheorem{remark}[thm]{Remark}
\numberwithin{equation}{section}
\newcounter{IssueCounter}
\newtheorem{Issue}[IssueCounter]{Issue}
\newcommand{\issue}[2]{
\begin{Issue}[\textcolor{red}{#1}]{\textcolor{blue}{#2}}\end{Issue}}
\def\be {\begin{equation}}
\def\ee {\end{equation}}
\def\ba {\begin{eqnarray}}
\def\ea {\end{eqnarray}}
\def\bpr {\begin{proof}}
\def\epr {\end{proof}}
\def\bes {\begin{equation*}}
\def\ees {\end{equation*}}
\def\bas {\begin{eqnarray*}}
\def\eas {\end{eqnarray*}}
\begin{document}
\renewcommand {\thefootnote}{\dag}
\renewcommand {\thefootnote}{\ddag}
\renewcommand {\thefootnote}{ }
\title{Normal form for maps with nilpotent linear part}
\author{
Fahimeh Mokhtari\footnote{Email Fahimeh Mokhtari: fahimeh.mokhtari.fm@gmail.com}\\
Ernst Roell\footnote{Email Ernst R\"oell: ernstroell@gmail.com}\\
Jan A. Sanders \footnote{Email Jan Sanders: jan.sanders.a@gmail.com}
}
\maketitle
\begin{abstract}
The normal form for an \(n\)-dimensional map with irreducible nilpotent linear part is determined using \(\Sl\)-representation theory.
We sketch by example how the reducible case can also be treated in an algorithmic manner.
The construction (and proof) of the \(\Sl\)-triple from the nilpotent linear part is more complicated than one would hope for,
but once the abstract \(\Sl\) theory is in place, both the description of the normal form and the computational splitting to compute the generator of the coordinate transformation
can be handled explicitly in terms of the nilpotent linear part without the explicit knowledge of the triple.
If one wishes one can compute the normal form such that it is guaranteed to lie in the kernel of an operator and one can be sure that this is really a
normal form with respect to the nilpotent linear part; one can state that the normal form is in \(\Sl\)-style.

Although at first sight the normal form theory for maps is more complicated than for vector fields in the nilpotent case,
it turns out that the final result is much better. Where in the vector field case one runs into invariant theoretical problems when the dimension gets larger if
one wants to describe the general form of the normal form,
for maps we obtain results without any restrictions on the dimension.

In the literature only the 2-dimensional nilpotent case has been described sofar, as far as we know. 
\end{abstract}
\section{Introduction}
Normal form theory (cf. \cite[Appendix A]{sanders2007averaging} for historical remarks) aims to transform a given system
to some canonical form, called the normal form, in order to analyse the
possible bifurcations when the parameters of the system pass a critical value or to compute approximate orbits. 
If one is lucky, the normal form is simpler or has more symmetry than the original system, but this is not part of the definition.

For vector fields this theory is well
developed, see for instance \cite{kuznetsov2004elements,sanders2007averaging},
and many different viewpoints have been developed over the years, invoking many
different branches of mathematics, such as Lie theory, cohomology theory and
representation theory,
cf. \cite{sanders2007averaging}.

However, for discrete dynamical systems the development of 
normal form theory has received little attention in the literature. 
\begin{rem}
In the literature, maps are often seen as Poincar\'e maps for some vector field and 
and if the linearization is of the type identity plus nilpotent matrix, it is called nilpotent,
since it is related  to the Poincar\'e map of a vector field with nilpotent linear part.
This is not our definition of nilpotent.
\end{rem}
There are few  paper  regarding the study of normal forms for maps,
\cite{gramchev2005normal,MR1735239,wang2008further},
and there seems to be no general and systematic approach to the construction of
normal forms of maps.
However, the recent appearance of \cite{kuznetsov2019numerical}, confirms that
there is indeed a need for the theoretical development of normal form theory
for maps.

One of the main differences  between maps and vector fields lies
in fact that the so-called homological operator is not linear with respect to
the linear part of the map. This plays a role when we want to lift decompositions, like the additive semisimple-nilpotent decomposition,
or when we want to compute Lie brackets.
For instance, for vector fields the additive semisimple-nilpotent splitting for the linear part of the
vector field induces an additive semisimple-nilpotent splitting for the homological operator. 
In the case of maps, the additive semisimple-nilpotent splitting of the linear part does
{\em not} induce an additive semisimple-nilpotent splitting of the homological operator, due to
the lack of linearity.  The multiplicative semisimple-nilpotent splitting does not have this problem and
might be the more natural splitting to consider. But here nilpotent means identity plus nilpotent, so it will not be a trivial application of the theory in this paper.

Similarily, for vector fields with nilpotent linear part, embedding the linear
part into an 
$\mathfrak{sl}_{2}$-triple induces an 
$\mathfrak{sl}_{2}$-triple in the homological operator. 

Nilpotent normal forms for vector fields have been  studied in the
literature and over the years it has become clear that there is intricate
interplay between nilpotent normal forms and the finite dimensional representations of 
$\mathfrak{sl}_{2}$, see
\cite{MR855083,cushman1988normal,cushman1988splitting,gazor2013normal,gazor2013volume,gazor2014normal}.
The representation theory of 
$\Sl$ has also played a role in the normalisation of
nilpotent vector fields depending on parameters. 
For instance, \cite{mokhtari2019versal} has constructed the nilpotent normal form of versal
deformations of nilpotent and nonsemisimple vector fields. 
\\
The construction of the normal form for maps with nilpotent linear part has
received little attention in the literature.   The only  case that we are aware of   is  the two dimensional case, see   \cite{MR1735239}.

In this paper, we construct from the nilpotent linear part an  $\Sl$-action that  can be used to compute the normal form and we give an explicit expression for the normal form in the irreducible case, followed by several reducible cases, which tend to be slightly more complicated to analyze.

The Jacobson-Morozov theorem \cite[Section 12.5]{sanders2007averaging,mokhtari2019versal} guarantees that any nilpotent element of a reductive Lie
algebra can be embedded in an
$\Sl$-triple, which   
consists of three elements
$\langle N,H,M \rangle$ that satisfy the commutator
relations 
\begin{eqnarray*}
[M,N] &=& H,\quad {[}H,N] = -2 N,\quad
{[}H,M] = 2 M.
\end{eqnarray*}
This is isomorphic to the Lie algebra 
$\Sl$, which is spanned by the matrices
\[
\mathbf{n}
= 
\begin{pmatrix}
0 & 1 \\
0 & 0 
\end{pmatrix}
,
\quad 
\mathbf{m}
=
\begin{pmatrix}
0 & 0 \\
1 & 0
\end{pmatrix}
,
\quad
\mathbf{h}
=
\begin{pmatrix}
-1 & 0 \\
0 &1
\end{pmatrix}
.
\]
For further details on the study of Lie algebras and the finite dimensional
representations of 
$\mathfrak{sl}_{2}$, we refer the reader to
\cite{humphreys2012introduction}.
The idea of normal form theory is to transform a map 
$f$ through a conjugation by a near identity transformation to a new map 
$\bar{f}$, in such a way that if we were to repeat the process, \(\bar{f}\) would prove to be stable under it. 
This gives rise to the so-called homological operator (see Definition \ref{def:HomologicalOperator}), which plays a central
role in normal form theory. 
In analogy to normal form theory for vector fields the aim is to remove the
terms in the image of the homological operator. 
Equivalently, the transformed map 
$\bar{f}$ is in normal form when its nonlinear part lies in a complement to the image of the homological
operator. 
This is where the finite dimensional representations of
$\mathfrak{sl}_{2}$ come into play. 

For an 
$\Sl$-triple 
$\langle N,H,M\rangle$ acting on a finite dimensional vector space, the space
decomposes as 
$\im N \oplus \ker M$. 
Therefore, embedding the homological operator into an
$\Sl$-triple provides an explicit
description of the normal form {\em style} (as the kernel of the operator \(M\)) of a map with nilpotent linear part.
Moreover, one can in general compute \(\ker M\) and the \(H\)-eigenvalues of the elements in the kernel.
The dimension of the irreducible representation generated as an \(N\)-orbit of the kernel element is its eigenvalue plus one.
This allows one to check that the computational results are correct, using the Cushman-Sanders test \cite{MR855083,sanders2007averaging}.
We remark that the correctness argument involves two steps: first one shows that the candidate description of \(\ker M\) does not contain any linear relations,
then one computes the generating function to show that the dimensions at all degrees are correct, comparing the result to the known generating functions of general polynomials and polynomial vector fields.

The explicit construction of an 
$\Sl$-triple for general maps with nilpotent linear part is the
main result of this paper. 
In the vector field case, see \cite[Section 9.1]{sanders2007averaging}, the operator is also linear with respect to the linear
part of the vector field, actually a Lie algebra homomorphism. 
An embedding of the linear part therefore induces a 
$\Sl$-triple in the homological operator. 

The challenge for the case of maps is the nonlinearity with respect to the
linear part. 
Therefore, we indeed have to construct 
$\Sl$-triples for each homogeneous degree
separately. 
We are able to generalise this to all degrees such that
the construction only depends on the nilpotency index of the linear part.
The nilpotency index for a nilpotent matrix
$N\in \mathfrak{gl}(\mathbb{R})$ is defined as the least number 
$p$ such that 
$N^{p}=0$, where 
$\mathfrak{gl}_{n}(\mathbb{R})$ is the space of real valued
$n\times n$ matrices. 

The construction of 
$\Sl$-triples is first shown for nilpotent matrices in Jordan form.

Our approach is to first split the homological operator into two commuting operators by considering the map as an element in the tensor product of a space of polynomials and a vector space.
This allows us to embed them in two different 
$\Sl$-triples and from these
two we construct an  
$\Sl$-triple for the homological operator. 
The first operator is a multiplication operator which is a Lie algebra
homomorphism and the second is a substitution operator. 
Below we give a more detailed treatment of the matter. 
Problems arise with the second operator, which only is an antihomomorphism. 
Therefore our construction is specifically tailored to work with homomorphisms
and antihomomorphisms.
This is first done for matrices and then it is shown how the interplay with
homomorphisms allows for the general construction with relative ease.

The paper  is organized as follows. In Section \ref{sec:prelim} we describe the normal form procedure for maps.
In Section \ref{sec:weakinv} we find relations between the nilpotent and its transpose, followed by the construction of an \(\Sl\)-triple in Section \ref{sec:sl2triple}.
In Section \ref{sec:NFirreducible} this triple is used to compute the normal form in  the general irreducible case using transvectants,
with the \(2\)- and \(3\)-dimensional case as explicit example.
In Section \ref{sec:reducible} we treat several reducible cases with two blocks, namely the \((2,3)\)- and \((2,2)\)-case and the \((k_1,k_2)\)-case.
Here we show the effectiveness of the \(\Sl\) representation theory to prove the correctness of the computed normal form.

The results in the Sections \ref{sec:prelim}-\ref{sec:sl2triple} are based on the MSc-thesis \cite{roell2020}.
\section{Preliminaries}\label{sec:prelim}
Consider a smooth map 
$f:\mathbb{R}^{n}\to \mathbb{R}^{n}$, such that 
$f(0)=0$.

Define the vector space 
$P_{k}$ as the span of the set of homogeneous monomials of  degree 
$k$ and let \(\mathcal{P}^n_k=P_{k+1}\otimes \mathbb{R}^n\).

Then set  
$\mathcal{P}^n = \prod_{k\in \mathbb{N}} \mathcal{P}^n_{k}$, the direct product of the homogeneous
spaces (product, since we allow infinite summations). 
With this notation, any smooth map 
$f\in \mathcal{P}^n$ with 
$f(0)=0$ can be written as 
\begin{equation}
\label{eqn:generalmap}
f = Ax +\sum_{i=1}^\infty f_{i}(x) , 
\quad f_{i}\in \mathcal{P}^n_{i},
\end{equation}
where 
$f_{i}$ are the terms of degree 
$i+1$ in the multivariate Taylor expansion of 
$f$.
Without loss of generality we assume that the linear part  
$A:=Df(0)$ is in some sort of desired form, usually real block diagonal form or  
Jordan normal form.

A near identity transformation is a map  
$\varphi\in \mathcal{P}^n$ such that  
$\varphi(0)=0, D\varphi(0)=I$ and any near identity transformation can therefore be written as  
\[
\varphi(x) = x + \varphi_{1}(x)+ \varphi_{2}(x) + \ldots, \quad
\varphi_{i}\in \mathcal{P}^n_{i}.
\]

Let 
$A\in \mathfrak{gl}_n(\mathbb{R})$ and define the map 
$\subs{A}:\mathcal{P}^n\to \mathcal{P}^n$ as 
\ba
\label{eqn:SubsOp}
(\subs{A}\varphi)(x) = \varphi(Ax),
\ea
where 
$\mathcal{P}^n$ is the space of vector polynomials. 
The following definition plays a key role in the theory of normal forms and has
a direct analogue in the vector field case. 
\begin{defn}
\label{def:HomologicalOperator}
The linear operator,
\ba
\conn{A}:P_{k+1}\otimes \mathbb{R}^n\to P_{k+1}\otimes \mathbb{R}^n,\quad \conn{A}=1\otimes \mult{A}-\subs{A}\otimes 1,
\label{eqn:HomologicalOperator}
\ea
is called the homological operator. 
This notation is inspired by the fact that if we consider the tensorproduct \(\mathbb{R}[x_1,\ldots,x_n]\otimes\mathbb{R}^n\),
then this is similar to the notation \(\Delta(X)=X\otimes 1 +1 \otimes X\) that is commonly used in the context of actions on tensor products.
\end{defn}
Consider a near identity transformation
$\varphi\in \mathcal{P}^n$.
Then any map \eqref{eqn:generalmap} can be transformed to a new map 
$\bar{f}$ via 
$\bar{f}=\varphi\circ f \circ \varphi^{-1}$.
By expanding the right hand side of 
$\bar{f}$ and collecting terms of the same degree, we can write the
transformed map as 
\ba
	\label{eqn:TransformedSystem}
\bar{f}(x)
= Ax 
+ f_{1}(x) - (\conn{A}\varphi_{1})(x) 
+ f_{2}(x) - (\conn{A}\varphi_{2})(x) + r_{2}(x) 
+ \ldots, 
\ea
where 
$f_{i} \in \mathcal{P}^n_{i}$ denote the terms in the original map, 
$\conn{A}$ is defined as in \eqref{eqn:HomologicalOperator} and the terms 
$r_{i} \in \mathcal{P}^n_{i}$ are additional terms that depend on  
$\varphi_{k},\ 1 \le k < i$ and 
$f_{k},\ 1\le k < i$. 

Each homogeneous space 
$\mathcal{P}^n_{k}$ of vector monomials can be decomposed as 
$\mathcal{P}^n_{k}=\im\conn{A}\oplus \mathcal{C}_{k}$ where 
$\mathcal{C}_{k}$ is a complement to 
$\im \conn{A}$, determining the {\em style} \(\mathcal{C}\) of the normal form.
Any term in 
$f_{i}\in \mathcal{P}^n_{i}$ that lies in the image of 
$\conn{A}$ can be transformed away  by a choosing 
$\varphi_{i}\in \mathcal{P}^n_{i}$ such that 
$\conn{A}\varphi_{i}=f_{i}$.
Therefore the normal form of a map 
$f:\mathbb{R}^{n}\to \mathbb{R}^{n}$ with respect to the linear part 
$A$ can be defined as follows.

\begin{defn}
\label{def:NormalForm}
The map \eqref{eqn:generalmap} is said to be in normal form (with style \(\mathcal{C}\)) with respect to its linear part 
$A$ if 
$f_{k} \in \mathcal{C}_{k}, \forall k \in \mathbb{N}$ where 
$\mathcal{P}^n_{k} = \im \conn{A} \oplus\mathcal{C}_{k}$ and 
$\conn{A}$ is defined as in \eqref{eqn:HomologicalOperator}. 
\end{defn}
\begin{rem}
The terminology {\em style} has been introduced by James Murdock and tries to convey that it is a matter of choice, at times depending on taste and fashion.
\end{rem}
From \eqref{eqn:TransformedSystem}, the normal form of 
$f$ can be found by recursively solving the equations 
\bas
\conn{A}\varphi_{1} &=& f_{1}   - \bar{f}_{1},\\  
\conn{A}\varphi_{2} &=& f_{2} + r_{2} - \bar{f}_{2},\\  
\conn{A}\varphi_{3}& =& f_{3} + r_{3} - \bar{f}_{3},\\
&\vdots&
\eas
for some general  
$\bar{f}_{k}\in \mathcal{C}_{k}$ and 
$\varphi_{k}\in \mathcal{P}^n_{k}$.
The terms 
$\bar{f}_{k}$ that remain constitute the normal form of 
$f$.

Then \eqref{eqn:HomologicalOperator} can be written as
\ba
\label{eqn:SplitHomOp}
\conn{A} = \mult{A} - \subs{A}. 
\ea

The following lemma provides some properties of 
$\subs{A}$  
as defined by Equation  \eqref{eqn:SubsOp}.
\begin{lem}\label{lem:PropSubsMult}
The  following holds true
for all 
$A,B\in \mathfrak{gl}(\mathbb{R})$.
\renewcommand{\labelenumii}{\Roman{enumii}}
\begin{itemize}
\item [\rm 1]
The maps
$\mult{A}$ and 
$\subs{B}$ commute, or equivalently \( [\mult{A},\subs{B}] = 0\).
\item  [\rm 2]
For any
$k\in \mathbb{N}$,
\(
(\subs{A})^{k} = \subs{(A^k)},
\)
and we can write this as \(\subs{A}^k\).
\item  [\rm 3] If 
$A$ is nilpotent,  then 
$\subs{A}$ 
is nilpotent.
\item  [\rm 4]
The map $A\mapsto \subs{A}$
is a algebra antihomomorphism
\(
\subs{A}\subs{B}= \subs{(BA)}.
\)
\noindent
\end{itemize}
This does not imply it is a Lie algebra antihomomorphism.
\end{lem}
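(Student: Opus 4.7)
The plan is to verify each of the four items by directly unwinding the definitions. Writing $\mathcal{P}^n_k = P_{k+1}\otimes\mathbb{R}^n$, I recall that $\mult{A}=1\otimes A$ acts on the $\mathbb{R}^n$ tensor factor by left multiplication, while $\subs{B}$ acts on the polynomial factor by precomposition: $(\subs{B}\varphi)(x)=\varphi(Bx)$. Items 1 and 4 are the key computational identities; items 2 and 3 follow as corollaries.

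For item 1, since $\mult{A}$ acts only on the $\mathbb{R}^n$ factor and $\subs{B}$ acts only on the polynomial factor, their commutator vanishes on simple tensors $p\otimes v$ and hence on all of $\mathcal{P}^n_k$ by linearity. For item 4 I compute directly
\[
(\subs{A}\subs{B}\varphi)(x)=(\subs{B}\varphi)(Ax)=\varphi(BAx)=(\subs{(BA)}\varphi)(x),
\]
giving the antihomomorphism identity $\subs{A}\subs{B}=\subs{(BA)}$, with the order of $A$ and $B$ reversed exactly because substitution is applied outside-in.

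Item 2 is the special case $A=B$ of item 4, extended by induction on $k$ to $\subs{A}^k=\subs{(A^k)}$. For item 3, if $A^p=0$, then item 2 gives $\subs{A}^p=\subs{(A^p)}=\subs{0}$; but $\subs{0}\varphi(x)=\varphi(0)=0$ for all $\varphi\in P_{k+1}$ with $k\geq 0$, since such polynomials have no constant term, so $\subs{0}$ is the zero operator on every graded piece and $\subs{A}$ is nilpotent.

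None of these steps presents a genuine obstacle; the whole lemma is a direct bookkeeping exercise once the tensor-factor interpretation is in place. The only conceptual nuance, flagged by the closing sentence of the lemma, is that the identity $\subs{A}\subs{B}=\subs{(BA)}$ does not promote to a Lie algebra antihomomorphism, because the map $A\mapsto\subs{A}$ fails to be linear in $A$: substituting $(A+B)x$ into a nonlinear $\varphi$ does not give $\varphi(Ax)+\varphi(Bx)$. This is precisely the nonlinearity in the linear part that the introduction emphasizes as the fundamental obstruction distinguishing the map case from the vector field case, and it explains why the lemma bothers to state the antihomomorphism property multiplicatively rather than in Lie-algebraic terms.
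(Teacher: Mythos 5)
Your proposal is correct and follows essentially the same route as the paper: all four items are proved by directly unwinding the definitions, with item 4 computed exactly as in the paper, item 2 obtained by iteration, and item 3 reduced to $\varphi(A^p x)=\varphi(0)=0$ (your explicit remark that this uses the absence of constant terms is a welcome, if minor, clarification). The only cosmetic difference is that you justify item 1 by the tensor-factor separation of $\mult{A}$ and $\subs{B}$ rather than the paper's one-line evaluation $A\varphi(Bx)-A\varphi(Bx)=0$, which amounts to the same thing.
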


\begin{proof}
For  the  first item we have then
\bas
([\mult{A},\subs{B}]\varphi)(x)
&=&
\mult{A}\subs{B}\varphi(x)-\subs{B}\mult{A}\varphi(x)
= 
A\varphi(Bx) - A\varphi(Bx)=0.
\eas
For the second item 
\bas
(\subs{A})^{k}\varphi(x)
=
(\subs{A}^{k-1}\varphi)(Ax)
=
\cdots
=
\varphi(A^{k}x)
=
\subs{(A^{k})}\varphi(x).
\eas
For the third  one, 
since
$A^{p}=0$, then 
for 
an arbitrary $\varphi\in \mathcal{P}^n$ and  
by using  the  previous   item we obtain  
\bas
(\subs{A}^{p}\varphi)(x)=(\subs{(A^{p})}\varphi)(x)=\varphi(A^{p}x)=\varphi(0)=0.
\eas
As 
$\varphi$ was arbitrary, we find that 
$\subs{A}^{p}$ maps every element to zero, and hence is the
zero operator. 
For the last one  by direct computation  we have 
 then
\bas
\subs{A}\subs{B}\varphi(x)
=
\subs{B}\varphi(Ax)
=
\varphi(BAx)
=
\subs{(BA)}\varphi(x),
\eas
\end{proof}
\begin{cor}\label{cor:nilp}
If \(A^p=0\) then \(\conn{A^{2p}}=0\).
\end{cor}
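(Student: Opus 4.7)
The plan is to read this as a nilpotency statement for $\conn{A}$ (equivalently, the binomial-theorem interpretation $(\conn{A})^{2p}=0$) and exploit item~1 of Lemma~\ref{lem:PropSubsMult}, which tells me that $\mult{A}$ and $\subs{A}$ commute. With commutativity in hand, I can expand
\[
(\conn{A})^{2p} \;=\; (\mult{A}-\subs{A})^{2p} \;=\; \sum_{k=0}^{2p} \binom{2p}{k}(-1)^{2p-k}\,\mult{A}^{k}\,\subs{A}^{2p-k}
\]
by the ordinary binomial theorem, then collapse each factor using $\mult{A}^{k}=\mult{A^{k}}$ (multiplication is an honest algebra homomorphism) and $\subs{A}^{2p-k}=\subs{A^{2p-k}}$ (item~2 of the lemma).

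The decisive combinatorial observation is then immediate: for every $k\in\{0,1,\ldots,2p\}$ at least one of the exponents $k$ and $2p-k$ is at least $p$, simply because $k+(2p-k)=2p$. If $k\geq p$, the hypothesis $A^{p}=0$ forces $A^{k}=0$ and hence $\mult{A^{k}}=0$; if $2p-k\geq p$, the same argument as in item~3 of the lemma shows that $\subs{A^{2p-k}}\varphi(x)=\varphi(A^{2p-k}x)=\varphi(0)=0$ for all $\varphi$, so $\subs{A^{2p-k}}=0$. Either way every summand vanishes, and the sum collapses to zero.

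There is no real obstacle once commutativity is invoked; the one subtlety worth flagging is the apparent tension between item~4 of the lemma (antihomomorphism: $\subs{A}\subs{B}=\subs{BA}$) and item~2 (collapse of powers: $\subs{A}^{j}=\subs{A^{j}}$). These are compatible because when the two arguments coincide the order reversal is invisible, so the antihomomorphism obstruction that would appear for general $A,B$ disappears here and the binomial calculation goes through cleanly. Note also that the literal reading $\conn{A^{2p}}=\mult{A^{2p}}-\subs{A^{2p}}=0$ is a trivial byproduct of the same two ingredients, since $A^{p}=0$ immediately yields $A^{2p}=0$.
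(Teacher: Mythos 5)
Your argument is correct and is exactly the route the paper intends: the corollary is stated without proof as an immediate consequence of Lemma~\ref{lem:PropSubsMult}, using commutativity of $\mult{A}$ and $\subs{A}$ (item~1), the binomial expansion of $(\mult{A}-\subs{A})^{2p}$, the collapse of powers $\subs{A}^{j}=\subs{(A^j)}$ (items~2--3), and the pigeonhole observation that in each term one exponent is at least $p$. Your side remarks (the harmlessness of the antihomomorphism when both arguments coincide, and the triviality of the literal reading $\conn{A^{2p}}=0$ since $\varphi(0)=0$) are also accurate.
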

We state some important properties of 
the homological operator in the following lemma. 

\begin{lem}[{\cite[Lemma 2.1]{gramchev2005normal}}]
Let 
$A,B\in \mathfrak{gl}_{n}(\mathbb{R})$ and let 
$\conn{A}$ be as in \eqref{eqn:HomologicalOperator}.  
Then 
\begin{itemize}
\item[(a)] 
If 
$A$ is nilpotent, then 
$\conn{A}$ is nilpotent (see Corollary \ref{cor:nilp}). 
\item[(b)] If 
$AB=BA$, then 
$\conn{A}
\conn{B}=
\conn{B}
\conn{A}$. 
\end{itemize}
\label{lem:PropertiesHomologicalOperator}
\end{lem}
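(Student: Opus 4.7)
The plan is to reduce both items to algebraic manipulations based on the decomposition $\conn{A}=\mult{A}-\subs{A}$ in \eqref{eqn:SplitHomOp} and the four items of Lemma~\ref{lem:PropSubsMult}. The operator $\mult{}$ is an algebra homomorphism, $\subs{}$ is an algebra antihomomorphism, the two types of operator always commute with each other, and powers of substitution collapse via $(\subs{A})^{j}=\subs{A^{j}}$; both (a) and (b) are then immediate consequences of these facts.

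For part (a), I would first invoke the commutation $[\mult{A},\subs{A}]=0$ from Lemma~\ref{lem:PropSubsMult}.1 to apply the binomial theorem. Using $(\mult{A})^{k-j}=\mult{A^{k-j}}$ (homomorphism property) together with $(\subs{A})^{j}=\subs{A^{j}}$ (Lemma~\ref{lem:PropSubsMult}.2), this yields
\[
(\conn{A})^{k}=\sum_{j=0}^{k}(-1)^{j}\binom{k}{j}\mult{A^{k-j}}\,\subs{A^{j}}.
\]
With $A^{p}=0$, the choice $k=2p-1$ ensures that for every index $j$ either $j\ge p$, in which case $\subs{A^{j}}=\subs{0}$ annihilates every element of $\mathcal{P}^{n}$ (whose members vanish at the origin), or $k-j\ge p$, in which case $\mult{A^{k-j}}=\mult{0}=0$. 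Hence each summand vanishes and $\conn{A}$ is nilpotent; this is the operator-level nilpotency standing behind Corollary~\ref{cor:nilp}, the displayed identity $\conn{A^{2p}}=0$ itself being automatic since $A^{2p}=0$.

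For part (b), with $AB=BA$ the goal is to show that the four operators $\mult{A}$, $\mult{B}$, $\subs{A}$, $\subs{B}$ pairwise commute, after which expanding $\conn{A}\conn{B}=(\mult{A}-\subs{A})(\mult{B}-\subs{B})$ and rearranging term by term recovers $\conn{B}\conn{A}$. The cross-commutators $[\mult{A},\subs{B}]$ and $[\mult{B},\subs{A}]$ vanish by Lemma~\ref{lem:PropSubsMult}.1. Since $\mult{}$ is a homomorphism, $\mult{A}\mult{B}=\mult{AB}=\mult{BA}=\mult{B}\mult{A}$. The only place where the antihomomorphism property of Lemma~\ref{lem:PropSubsMult}.4 could cause trouble is the product $\subs{A}\subs{B}=\subs{BA}$, but combined with $AB=BA$ this reads $\subs{A}\subs{B}=\subs{BA}=\subs{AB}=\subs{B}\subs{A}$, as required.

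The only subtlety worth flagging is this last use of the antihomomorphism: without the hypothesis $AB=BA$ the substitution operators would satisfy only a swapped composition law, which is precisely the structural obstacle (emphasised in the introduction) that distinguishes normal form theory for maps from the vector field case, where the analogous operator is a genuine Lie algebra homomorphism. There is no essential difficulty here beyond careful bookkeeping of the commutations.
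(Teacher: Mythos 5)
Your proof is correct and follows essentially the route the paper itself relies on: the paper does not spell out a proof (it cites Gramchev--Walcher and, for part (a), points to Corollary~\ref{cor:nilp}), but the intended argument is exactly yours, namely the splitting \(\conn{A}=\mult{A}-\subs{A}\) combined with the commutation, power-collapse and antihomomorphism facts of Lemma~\ref{lem:PropSubsMult}. Your binomial expansion even makes explicit that the nilpotency index is at most \(2p-1\), slightly sharper than the bound \(2p\) recorded in Corollary~\ref{cor:nilp}.
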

In principle, one can rely on the Jacobson-Morozov theorem to extend a nilpotent matrix to an \(\Sl\)-triple.
The main difficulty lies in the embedding of 
$\subs{\mathbf{n}}$ into a triple, because the map 
$A\to \subs{A}$ for 
$A\in \mathfrak{gl}_{n}(\mathbb{R})$ is {\em not} a linear map (implicitly assuming it is, has simplified some `proofs' in the literature).
One could for fixed dimension and degree compute the matrix of \(\subs{\mathbf{n}}\) and then apply Jacobson-Morozov,
but we prefer to give a construction for which we can vary the dimension and get explicit formulas for the \(\Sl\)-triple.
This has the advantage of having a uniform definition of the normal form style.

We hasten to point out that this is the main difficulty that needs to be
overcome.
To illustrate that the map 
$A\mapsto \conn{A}$ is not a Lie algebra homomorphism, one computes for 
$A,B\in \mathfrak{gl}_{n}(\mathbb{R})$,
\begin{eqnarray*}
\conn{[A,B]}=
 \mult{[A,B]} - \subs{[A,B]}.
\end{eqnarray*}
On the other hand the bracket of 
$\conn{A}$ and 
$\conn{B}$ expands as
\begin{eqnarray*}
[\conn{A},\conn{B}]&=&(\mult{A}-\subs{A})(\mult{B}-\subs{B})-(\mult{B}-\subs{B})(\mult{A}-\subs{A})
\\&=& \mult{A}\mult{B}-\mult{A}\subs{B}-\subs{A}\mult{B}+\subs{A}\subs{B}-\mult{B}\mult{A}+\mult{B}\subs{A}+\subs{B}\mult{A}-\subs{B}\subs{A}
\\&=&\mult{A}\mult{B}+\subs{A}\subs{B}-\mult{B}\mult{A}-\subs{B}\subs{A}
\\&=&\mult{(AB)}-\mult{(BA)}+[\subs{A},\subs{B}]
\\&=&\mult{[A,B]}+[\subs{A},\subs{B}].
\end{eqnarray*}
The two quantities are not equal because 
$\varphi$ is not a linear map and hence we cannot conclude that
$(\conn{[A,B]}\varphi)(x)$ is in general equal to $ ([\conn{A},\conn{B}]\varphi)(x)$,
since \(\varphi((AB-BA)x)\) is not in general equal to \(\subs{(AB)}\varphi(x)-\subs{(BA)}\varphi(x)\). 
Therefore the map 
$A\mapsto \conn{A}$ is not a Lie algebra homomorphism.

The following definition is used to construct a triple for the nilpotent matrix 
$\mathbf{n}\in \mathfrak{gl}_{n}(\mathbb{R})$.

\begin{defn}
\label{def:m}
Let 
$\mathbf{n}\in \mathfrak{gl}_{n}(\mathbb{R})$ be a nilpotent matrix and let 
$\nN$ be the Jordan normal form of 
$\mathbf{n}$, the block diagonal matrix with nilpotent 
Jordan blocks on the block diagonal. 
Let 
$P\in GL_{n}(\mathbb{R})$ be an invertible matrix such that 
$\mathbf{n}=P^{-1}\nN P$.
Then we define 
$\mathbf{m}=P^{-1}\nM P$, where \(\nM\) is the transpose of \(\nN\).
Notice that \(\mathbf{n}^t=P^t P \mathbf{m} (P^t P )^{-1}\),
so unless \(P\) is orthogonal, \(\mathbf{m}\) is not the transpose of \(\mathbf{n}\).
We call \(\mathbf{m}\) the {\em conjugate transpose} of \(\mathbf{n}\).
\end{defn}

\begin{remark}
\label{rmk:Triples}
The nilpotent linear part of the map
$f\in \mathcal{P}^n$ to be normalised will be denoted by 
$\mathbf{n}$. 
The main goal is to embed the homological operator 
$\conn{\mathbf{n}}$, see Definition \ref{def:HomologicalOperator}, into a triple. 
We mention here that the construction of such an \(\Sl\)-triple is not trivial and the formulas may look cumbersome.
On the bright side, once the triple is in place, almost all our computations can be done with $\conn{\mathbf{n}}$.
The triple then defines the normal form style, but is not needed in the actual computations, since we then rely on the abstract \(\Sl\)-representation theory
and the Clebsch-Gordan formalism.

\end{remark}

First we will provide an explicit construction of 
$\mathfrak{sl}_{2}$-triples for nilpotent matrices.
Then we use the fact that 
$\subs{\mathbf{n}}$ is an antihomomorphism to construct an 
$\mathfrak{sl}_{2}$-triple for 
$\subs{\mathbf{n}}$. 
\section{Relations}\label{sec:weakinv}


By convention, the Jordan normal form of a matrix 
$A\in \mathfrak{gl}_{n}(\mathbb{R})$ is the upper triangular matrix
with the eigenvalues on the main diagonal, either ones or zeros on the super
diagonal and zeros elsewhere.
\begin{lem}
\label{lem:WeakInvPower}
Let 
$\nN\in \mathfrak{gl}_{n}(\mathbb{R})$ be a nilpotent matrix in Jordan form and 
$\nM$ be its transpose. 
Then  \(\nN\nM\nN=\nN\) and \(\nM\nN\nM=\nM\).
\end{lem}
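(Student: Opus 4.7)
The plan is to reduce to the case of a single nilpotent Jordan block and then verify the identities by a direct computation on the standard basis.

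Since $\nN$ is block diagonal, with nilpotent Jordan blocks $J_{k_1},\ldots,J_{k_r}$, its transpose $\nM$ is block diagonal with blocks $J_{k_i}^t$ in the corresponding positions. Matrix products preserve this block structure, so both $\nN\nM\nN$ and $\nM\nN\nM$ are block diagonal with blocks $J_{k_i}J_{k_i}^t J_{k_i}$ and $J_{k_i}^t J_{k_i} J_{k_i}^t$ respectively. It therefore suffices to prove the two identities for a single nilpotent Jordan block $J_k$ of size $k$.

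For such a block, on the standard basis $e_1,\ldots,e_k$ we have $J_k e_1 = 0$, $J_k e_i = e_{i-1}$ for $i\ge 2$, and dually $J_k^t e_k = 0$, $J_k^t e_i = e_{i+1}$ for $i\le k-1$. A direct computation yields
\[
J_k^t J_k\, e_1 = 0, \qquad J_k^t J_k\, e_i = J_k^t e_{i-1} = e_i \text{ for } 2\le i \le k,
\]
so $J_k^t J_k$ is the orthogonal projection onto $\Span(e_2,\ldots,e_k)$. Since $J_k$ already vanishes on $e_1$, applying $J_k$ on the left leaves every basis vector unchanged under this projection, giving $J_k J_k^t J_k\, e_i = J_k e_i$ for all $i$, hence $J_k J_k^t J_k = J_k$.

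The second identity is symmetric: $J_k J_k^t$ is the orthogonal projection onto $\Span(e_1,\ldots,e_{k-1})$, and since $J_k^t$ annihilates $e_k$, composing on the right with $J_k J_k^t$ recovers $J_k^t$, so $J_k^t J_k J_k^t = J_k^t$. There is no real obstacle here; the only point to be careful about is the block-diagonal reduction, which works cleanly because $\nM$ is defined as the transpose of the Jordan form, so the block-diagonal pattern of $\nN$ is inherited by $\nM$ block-by-block.
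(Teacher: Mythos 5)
Your proof is correct and follows essentially the same route as the paper: reduce by block diagonality to a single nilpotent Jordan block and verify the identity by direct computation, the only cosmetic differences being that you compute via the action on basis vectors (identifying $J_k^tJ_k$ and $J_kJ_k^t$ as projections) rather than by multiplying partitioned matrices, and you prove the second identity directly instead of by transposing the first.
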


\begin{proof}
Let
$\nN\in \mathfrak{gl}_{n}(\mathbb{R})$ consist of a single nilpotent Jordan block.
By block diagonality, it is sufficient to prove the result for one
nilpotent Jordan block.

A straightforward computation shows that 
\begin{eqnarray}
\label{eqn:mnnm}
\nN\nM\nN
&=&
\begin{pmatrix}
0_{n-1,1}&I_{n-1} \\
0&  0_{1,n-1}
\end{pmatrix}
\begin{pmatrix}
0_{1,n-1} & 0\\
I_{n-1}& 0_{n-1,1} 
\end{pmatrix}
\begin{pmatrix}
0_{n-1,1}&I_{n-1} \\
0&  0_{1,n-1}
\end{pmatrix}
\\&=&
\begin{pmatrix}
I_{n-1} & 0_{n-1,1}\\
0_{1,n-1}&  0
\end{pmatrix}
\begin{pmatrix}
0_{n-1,1}&I_{n-1} \\
0&  0_{1,n-1}
\end{pmatrix}
=\begin{pmatrix}
0_{n-1,1}&I_{n-1} \\
0&  0_{1,n-1}
\end{pmatrix}
=\nN,
\end{eqnarray}
where 
$I_{k}$ is the 
$k\times k$ identity and 
$0_{l,k}$ is the 
$l\times k$-matrix filled with zeros.  This proves the first identity.
The second follows by transposition.
\end{proof}
\begin{thm}
Let \(\mathbf{n}\) be nilpotent and \(\mathbf{m}\) its conjugate transpose. Then \(\mathbf{n}\mathbf{m}\mathbf{n}=\mathbf{n}\) and \(\mathbf{m}\mathbf{n}\mathbf{m}=\mathbf{m}\).
\end{thm}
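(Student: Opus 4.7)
The plan is to reduce this statement directly to Lemma \ref{lem:WeakInvPower} via the conjugation that defines $\mathbf{m}$. By Definition \ref{def:m}, we have $\mathbf{n} = P^{-1}\nN P$ and $\mathbf{m} = P^{-1}\nM P$, where $\nN$ is the Jordan normal form of $\mathbf{n}$ and $\nM = \nN^{t}$. Thus both $\mathbf{n}$ and $\mathbf{m}$ are obtained from $\nN$ and $\nM$ respectively by the \emph{same} similarity transformation $X \mapsto P^{-1}XP$, which is an algebra homomorphism.

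First, I would simply compute
\[
\mathbf{n}\mathbf{m}\mathbf{n} \;=\; (P^{-1}\nN P)(P^{-1}\nM P)(P^{-1}\nN P) \;=\; P^{-1}(\nN\nM\nN)P.
\]
The intermediate factors $PP^{-1}$ collapse to the identity, and Lemma \ref{lem:WeakInvPower} yields $\nN\nM\nN = \nN$, so the right-hand side equals $P^{-1}\nN P = \mathbf{n}$. The second identity is handled identically: $\mathbf{m}\mathbf{n}\mathbf{m} = P^{-1}(\nM\nN\nM)P = P^{-1}\nM P = \mathbf{m}$ by the second part of Lemma \ref{lem:WeakInvPower}.

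There is essentially no obstacle here; the content of the theorem is that the two relations proved in Jordan coordinates are preserved under the conjugation used to define the conjugate transpose. The only thing worth underlining in the write-up is that this would \emph{not} work if $\mathbf{m}$ had been defined as the ordinary transpose $\mathbf{n}^{t}$, since then one would have $\mathbf{m} = P^{t}\nM (P^{-1})^{t}$ and the intermediate $P$'s would not cancel. This is precisely the motivation for adopting the conjugate transpose in Definition \ref{def:m}.
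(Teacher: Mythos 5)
Your proposal is correct and is essentially identical to the paper's own proof: conjugate by $P$, cancel the intermediate $PP^{-1}$ factors, and invoke Lemma \ref{lem:WeakInvPower} for the Jordan-form identities $\nN\nM\nN=\nN$ and $\nM\nN\nM=\nM$. Your closing remark on why the ordinary transpose would not work is a nice addition but not needed for the argument.
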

\begin{proof}
Let 
$P\in GL_{n}(\mathbb{R})$ be the invertible matrix such that 
$\mathbf{n}=P^{-1}\nN P$.
By Definition \ref{def:m},
$\mathbf{m}=P^{-1}\nM P$.
Then \(\mathbf{n}\mathbf{m}\mathbf{n}=P^{-1}\nN P P^{-1}\nM P P^{-1}\nN P=P^{-1}\nN\nM\nN P=P^{-1}\nN P=\mathbf{n}\)
and \(\mathbf{m}\mathbf{n}\mathbf{m}=P^{-1}\nM P P^{-1}\nN P P^{-1}\nM P=P^{-1}\nM \nN \nM P=P^{-1}\nM P=\mathbf{m}\).
\end{proof}
In the proof of Theorem \ref{thm:SL2Mat}, we will make extensive use of the
following relations. 
\begin{thm}
\label{thm:RelasMN}
Let 
$\nN\in \mathfrak{gl}_{n}(\mathbb{R})$ consist of a single nilpotent Jordan
block and let 
$\nM$ be its transpose.  
Then for any 
$0\leq l \leq k, 1\leq k\in \mathbb{N}$ the following relations hold. 
\begin{itemize}
\item [a)] \label{thm:RelasJN1}
$\nN^l\nM^{k}\nN^{k}=\nM^{(k-l)}\nN^{k}$, 
\item [b)] \label{thm:RelasJN2}
$\nM^{k}\nN^{k}\nM^{l}=\nM^{k}\nN^{k-l}$.
\end{itemize}
\end{thm}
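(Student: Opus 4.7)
The plan is to prove (a) by tracking the action of each side on the standard basis of $\mathbb{R}^n$, and then deduce (b) by transposing (a).

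Since $\nN$ is a single nilpotent Jordan block, on the standard basis $e_1,\ldots,e_n$ we have $\nN e_j = e_{j-1}$ for $j\ge 2$ with $\nN e_1 = 0$, and $\nM e_j = e_{j+1}$ for $j\le n-1$ with $\nM e_n = 0$. Iterating these rules, $\nN^k e_j = e_{j-k}$ when $j>k$ and $0$ otherwise, while $\nM^k e_j = e_{j+k}$ when $j+k\le n$ and $0$ otherwise. To verify (a), I would apply $\nN^l\nM^k\nN^k$ to $e_j$ step by step. If $j\le k$ the innermost $\nN^k$ annihilates $e_j$ and both sides are $0$; if $j>k$ then $\nN^k e_j = e_{j-k}$, next $\nM^k e_{j-k} = e_j$ (which is defined because $j\le n$), and finally $\nN^l e_j = e_{j-l}$, which is defined thanks to the hypothesis $l\le k<j$. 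The right-hand side gives $\nM^{k-l}\nN^k e_j = \nM^{k-l}e_{j-k} = e_{j-l}$, valid because $j-l\le n$. Both sides therefore agree on every basis vector, proving (a).

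For (b), I would transpose (a) using $\nM = \nN^T$. Since transposition reverses products, $(\nN^l\nM^k\nN^k)^T = (\nN^k)^T(\nM^k)^T(\nN^l)^T = \nM^k\nN^k\nM^l$, and similarly $(\nM^{k-l}\nN^k)^T = \nM^k\nN^{k-l}$. Hence (b) is exactly the transpose of (a).

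The only subtle point, which is the main thing to watch, is the role of the hypothesis $0\le l\le k$: it is precisely what guarantees that the intermediate vectors produced in the chain $e_j \xrightarrow{\nN^k} e_{j-k} \xrightarrow{\nM^k} e_j \xrightarrow{\nN^l} e_{j-l}$ never fall outside the range $\{1,\ldots,n\}$ (once $j>k$), so no cancellation spoils the identity. An alternative, slightly more algebraic, route would be to use Lemma~\ref{lem:WeakInvPower} to prove inductively that $\nN\nM^j\nN^k = \nM^{j-1}\nN^k$ for $1\le j\le k$ and then iterate in $l$, but the basis-vector bookkeeping above is the cleanest path.
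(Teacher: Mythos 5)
Your proof is correct. The check on basis vectors is sound: with $\nN e_j=e_{j-1}$, $\nM e_j=e_{j+1}$ (zero at the ends), both sides of (a) send $e_j$ to $0$ when $j\le k$ and to $e_{j-l}$ when $j>k$, and the hypothesis $l\le k$ is exactly what keeps the final shift $\nN^l$ from truncating; (b) then follows by transposition, just as in the paper. Your route is, however, genuinely different in execution from the paper's: there the authors work with explicit block partitions, first computing $\nN^k$ and showing that $\nM^k\nN^k$ is the block-diagonal projection onto the last $n-k$ coordinates, then verifying the case $l=1$ by multiplying the partitioned matrices $\nN\cdot(\nM^k\nN^k)$ and recognizing the result as $\nM^{k-1}\nN^k$, and finally inducting on $l$ (this is essentially the ``alternative route'' you sketch at the end). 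Your basis-vector bookkeeping handles all $l$ in one pass and dispenses with both the induction and the block algebra, so it is shorter and arguably more transparent; what the paper's block computation buys in exchange is the explicit identification of $\nM^k\nN^k$ as a coordinate projection, a fact that is reused later in the paper (for instance in the trace computation of Lemma~\ref{lem:traceP} and in the projection structure underlying Lemma~\ref{lem:kermm}), so their longer calculation is not wasted effort in the larger scheme.
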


\begin{proof} 
For 
$k\ge n$, the left hand side and right hand side are zero in
(a) and it is only necessary to prove the result for 
$k< n$.
Assume that 
$\nN$ is an 
$n\times n$ matrix. 
A computation shows that for 
$k < n $,
\begin{equation}
\nN^k=\begin{pmatrix}
0_{n-k,k} &I_{n-k} \\
O_{k} & O_{n-k}
\end{pmatrix},
\end{equation}
where \(O_k\) is the zeromatrix, 
Denote by 
$J_{k}$ a single 
$k\times k$ nilpotent Jordan block and
$N_{n,k}$ the 
$n\times k$ zero matrix with a one in the
lower left corner. 
In the special case that 
$n=k=1$, we define 
$N_{1,1}=(1)$ and 
$J_{1}=(0)$. 
\begin{eqnarray}
\label{eqn:powers}
\nM^{k}\nN^k
&=&
\begin{pmatrix}
0_{k,n-k} & O_k\\
I_{n-k}& 0_{n-k,k}
\end{pmatrix}
\begin{pmatrix}
0_{n-k,k}&I_{n-k} \\
O_k&  0_{k,n-k}
\end{pmatrix}
=
\begin{pmatrix}
O_{k} & 0_{k,n-k}\\
0_{n-k,k}&  I_{n-k}
\end{pmatrix}.
\end{eqnarray}
For  \(l=0\) the  statement is trivial. Now, we prove  the statement for \(l=1\)
by computing  the left hand side and right hand side
of the equality to show the result. 
Then 
$\nN$ can be partitioned as 
\begin{equation}
\label{eqn:part}
\nN= 
\begin{pmatrix}
J_{k}&  N_{k,n-k}\\
0_{n-k,k}&  J_{n-k}
\end{pmatrix}
.
\end{equation}
Using this partition and \eqref{eqn:powers}, we have
\begin{eqnarray*}
\nN \nM^{k}\nN^k
&= &
\begin{pmatrix}
J_{k}&  N_{k,n-k}\\
0_{n-k,k}&  J_{n-k}
\end{pmatrix}
\begin{pmatrix}
O_{k}& 0_{k,n-k}     \\
0_{n-k,k}& I_{n-k}
\end{pmatrix}
= 
\begin{pmatrix}
O_{k}& N_{k,n-k}\\
0_{n-k,k}& J_{n-k}
\end{pmatrix}
\\&=& 
\begin{pmatrix}
O_{k-1}& 0_{k-1,n-k+1}\\
0_{n-k+1,k-1} &J_{n-k+1}
\end{pmatrix}
=\nM^{k-1} \nN^k.
\end{eqnarray*}
Now the result follows by induction on \(l\).
The   second item of the theorem follows  by transposing the  first item. 
\end{proof}
The above Theorem readily generalizes to the case of a general nilpotent
matrix. 

\begin{cor}\label{cor:rels}
Let 
$\mathbf{n}$ be any nilpotent matrix  and let
$\mathbf{m}$ be its conjugate transpose (as defined in Definition \ref{def:m}). Then
\begin{itemize}
\item [a)] 
$\mathbf{n}^l\mathbf{m}^{k}\mathbf{n}^{k}=\mathbf{m}^{(k-l)}\mathbf{n}^{k}$, 
\item [b)] 
$\mathbf{m}^{k}\mathbf{n}^{k}\mathbf{m}^{l}=\mathbf{m}^{k}\mathbf{n}^{k-l}$.
\end{itemize}
Since we can switch the role of \(\mathbf{n}\) and \(\mathbf{m}\) we also have
\begin{itemize}
\item [c)] 
$\mathbf{m}^l\mathbf{n}^{k}\mathbf{m}^{k}=\mathbf{n}^{(k-l)}\mathbf{m}^{k}$, 
\item [d)] 
$\mathbf{n}^{k}\mathbf{m}^{k}\mathbf{n}^{l}=\mathbf{n}^{k}\mathbf{m}^{k-l}$.
\end{itemize}
Furthermore, taking \(l=k\) we see that \(\mathbf{n}^k\mathbf{m}^k\mathbf{n}^k=\mathbf{n}^k\) and \(\mathbf{m}^k\mathbf{n}^k\mathbf{m}^k=\mathbf{m}^k\).
\end{cor}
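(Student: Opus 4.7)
The plan is to reduce the statement for a general nilpotent matrix $\mathbf{n}$ to the single Jordan block case (Theorem \ref{thm:RelasMN}) in two conceptual steps: first extend the single-block identities to the block-diagonal Jordan form $\nN$, then push the result through conjugation by $P$ to reach $\mathbf{n}$ and $\mathbf{m}$.

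First I would observe that if $\nN = \mathrm{diag}(J_{k_1},\ldots,J_{k_s})$ is the Jordan normal form of $\mathbf{n}$, and $\nM = \nN^t = \mathrm{diag}(J_{k_1}^t,\ldots,J_{k_s}^t)$, then every power $\nN^k$ and $\nM^k$ remains block-diagonal with the same block structure, and so does any product of such powers. Thus the identities a) and b) of Theorem \ref{thm:RelasMN}, proved block-by-block, immediately lift to the full block-diagonal Jordan normal form: $\nN^l \nM^k \nN^k = \nM^{k-l}\nN^k$ and $\nM^k \nN^k \nM^l = \nM^k \nN^{k-l}$.

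Next I would exploit Definition \ref{def:m}. Since $\mathbf{n}=P^{-1}\nN P$ and $\mathbf{m}=P^{-1}\nM P$, for any nonnegative integer $j$ we have $\mathbf{n}^j = P^{-1}\nN^j P$ and $\mathbf{m}^j = P^{-1}\nM^j P$, and in any product the inner $PP^{-1}$ pairs telescope. Hence
\begin{equation*}
\mathbf{n}^l \mathbf{m}^k \mathbf{n}^k = P^{-1}\nN^l \nM^k \nN^k P = P^{-1}\nM^{k-l}\nN^k P = \mathbf{m}^{k-l}\mathbf{n}^k,
\end{equation*}
which gives (a), and (b) follows in exactly the same way from the block-diagonal version of Theorem \ref{thm:RelasMN}(b).

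For (c) and (d), the observation is that at the Jordan level the pair $(\nM,\nN)$ has the same structure as $(\nN,\nM)$: $\nM$ is itself nilpotent and block-diagonal, and its transpose is $\nN$. Therefore, the block-by-block argument of Theorem \ref{thm:RelasMN} applies verbatim after swapping the roles of $\nN$ and $\nM$, yielding $\nM^l \nN^k \nM^k = \nN^{k-l}\nM^k$ and $\nN^k \nM^k \nN^l = \nN^k \nM^{k-l}$; conjugation by $P$ then delivers (c) and (d). The specialization $l=k$ recovers the identities $\mathbf{n}^k\mathbf{m}^k\mathbf{n}^k=\mathbf{n}^k$ and $\mathbf{m}^k\mathbf{n}^k\mathbf{m}^k=\mathbf{m}^k$. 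The only subtle point worth stressing, rather than a real obstacle, is that $\mathbf{m}$ is \emph{not} in general the transpose of $\mathbf{n}$, so (c) and (d) cannot simply be obtained by transposing (a) and (b); the role-swap must be performed at the Jordan level, where the transpose relation does hold, and only then conjugated by $P$.
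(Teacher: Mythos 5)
Your proposal is correct and follows exactly the route the paper intends (and mostly leaves implicit): extend Theorem \ref{thm:RelasMN} block by block to the full Jordan form $\nN$, conjugate by $P$ using $\mathbf{n}^j=P^{-1}\nN^jP$ and $\mathbf{m}^j=P^{-1}\nM^jP$ as in the unnumbered theorem preceding the corollary, and obtain c), d) by the $\nN\leftrightarrow\nM$ role swap at the Jordan level. Your closing caveat --- that c), d) are not mere transposes of a), b) for $\mathbf{n},\mathbf{m}$ since $\mathbf{m}\neq\mathbf{n}^t$ in general, so the swap must happen before conjugating by $P$ --- is a worthwhile clarification of the paper's terse ``since we can switch the role of $\mathbf{n}$ and $\mathbf{m}$''.
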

\begin{lem}
For \(l\geq k\) we have \(\mathbf{n}^l\mathbf{m}^{k}\mathbf{n}^{k}=\mathbf{n}^{l}\) and \(\mathbf{m}^{k}\mathbf{n}^{k}\mathbf{m}^{l}=\mathbf{m}^{l}\).
\end{lem}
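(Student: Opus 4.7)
The plan is to reduce the statement to the two identities already noted at the end of Corollary \ref{cor:rels}, namely \(\mathbf{n}^k\mathbf{m}^k\mathbf{n}^k=\mathbf{n}^k\) and \(\mathbf{m}^k\mathbf{n}^k\mathbf{m}^k=\mathbf{m}^k\). These are the \(l=k\) boundary cases of parts (a) and (b), which are established for \(0\le l\le k\), and they are exactly what is needed to bootstrap to all \(l\ge k\).

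For the first equality, the plan is to split the power \(\mathbf{n}^l\) on the left using \(l\ge k\), writing \(\mathbf{n}^l=\mathbf{n}^{l-k}\mathbf{n}^k\), and then to absorb the middle factor via the idempotent-like identity:
\[
\mathbf{n}^l\mathbf{m}^k\mathbf{n}^k=\mathbf{n}^{l-k}\bigl(\mathbf{n}^k\mathbf{m}^k\mathbf{n}^k\bigr)=\mathbf{n}^{l-k}\mathbf{n}^k=\mathbf{n}^l.
\]
The second equality is completely symmetric: factor \(\mathbf{m}^l=\mathbf{m}^k\mathbf{m}^{l-k}\) on the right and apply \(\mathbf{m}^k\mathbf{n}^k\mathbf{m}^k=\mathbf{m}^k\) to obtain \(\mathbf{m}^k\mathbf{n}^k\mathbf{m}^l=\mathbf{m}^l\).

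There is no real obstacle here; the entire work has already been done in establishing Corollary \ref{cor:rels}. The only thing worth flagging is the hypothesis \(l\ge k\), which is used precisely to guarantee that \(\mathbf{n}^{l-k}\) (respectively \(\mathbf{m}^{l-k}\)) is well-defined as a nonnegative power, so that the factorization makes sense. Thus the new lemma is really a direct corollary of Corollary \ref{cor:rels} and should be presented as such.
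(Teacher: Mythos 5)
Your proof is correct and follows essentially the same route as the paper: factor \(\mathbf{n}^l=\mathbf{n}^{l-k}\mathbf{n}^k\) (respectively \(\mathbf{m}^l=\mathbf{m}^k\mathbf{m}^{l-k}\)) and absorb the middle block using the \(l=k\) identities \(\mathbf{n}^k\mathbf{m}^k\mathbf{n}^k=\mathbf{n}^k\) and \(\mathbf{m}^k\mathbf{n}^k\mathbf{m}^k=\mathbf{m}^k\) from Corollary \ref{cor:rels}. Nothing further is needed.
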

\begin{proof}
Assume \(l\geq k\). Then
\(\mathbf{n}^l\mathbf{m}^{k}\mathbf{n}^{k}=\mathbf{n}^{(l-k)+k}\mathbf{m}^{k}\mathbf{n}^{k}=\mathbf{n}^{(l-k)}\mathbf{n}^{k}=\mathbf{n}^{l}\)
and \(\mathbf{m}^{k}\mathbf{n}^{k}\mathbf{m}^{l}=\mathbf{m}^{k}\mathbf{n}^{k}\mathbf{m}^{k+(l-k)}=\mathbf{m}^{l}\).
\end{proof}
\begin{cor}
\label{cor:allrels}
Let 
$\mathbf{n}$ be any nilpotent matrix  and let
$\mathbf{m}$ be its conjugate transpose and assume \(k,l \in\mathbb{N}, k\geq 1\).
Let \(k\oplus l=\max(k,l)\). Then
\begin{itemize}
\item \(\mathbf{n}^l\mathbf{m}^{k}\mathbf{n}^{k}=\mathbf{m}^{\max(0,k-l)}\mathbf{n}^{\max(k,l)} =\mathbf{m}^{(k\oplus l)-l} \mathbf{n}^{k\oplus l}\),
\item \(\mathbf{m}^{k}\mathbf{n}^{k}\mathbf{m}^{l}=\mathbf{m}^{\max(k,l)}\mathbf{n}^{\max(0,k-l)}=\mathbf{m}^{(k\oplus l)} \mathbf{n}^{(k\oplus l)-l}\).
\item \(\mathbf{m}^l\mathbf{n}^{k}\mathbf{m}^{k}=\mathbf{n}^{\max(0,k-l)}\mathbf{m}^{\max(k,l)} =\mathbf{n}^{(k\oplus l)-l} \mathbf{m}^{k\oplus l}\),
\item \(\mathbf{n}^{k}\mathbf{m}^{k}\mathbf{n}^{l}=\mathbf{n}^{\max(k,l)}\mathbf{m}^{\max(0,k-l)}=\mathbf{n}^{(k\oplus l)} \mathbf{m}^{(k\oplus l)-l}\).
\end{itemize}
\end{cor}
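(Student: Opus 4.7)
The plan is to prove each of the four identities by splitting into the two cases $l \le k$ and $l \ge k$; in each case the right hand side collapses to one of the formulas already established in Corollary \ref{cor:rels} or in the preceding Lemma, so no new computation is required beyond verifying that the $k\oplus l$ bookkeeping matches.

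First I would dispose of the case $l \le k$. Here $k\oplus l = k$, so $(k\oplus l) - l = k-l \ge 0$. The four target identities then read
\[
\mathbf{n}^l\mathbf{m}^{k}\mathbf{n}^{k}=\mathbf{m}^{k-l}\mathbf{n}^{k},\quad
\mathbf{m}^{k}\mathbf{n}^{k}\mathbf{m}^{l}=\mathbf{m}^{k}\mathbf{n}^{k-l},
\]
\[
\mathbf{m}^l\mathbf{n}^{k}\mathbf{m}^{k}=\mathbf{n}^{k-l}\mathbf{m}^{k},\quad
\mathbf{n}^{k}\mathbf{m}^{k}\mathbf{n}^{l}=\mathbf{n}^{k}\mathbf{m}^{k-l},
\]
which are precisely items (a), (b), (c) and (d) of Corollary \ref{cor:rels}.

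Next I would handle the case $l \ge k$. Here $k\oplus l = l$, so $(k\oplus l) - l = 0$ and the four target identities collapse to
\[
\mathbf{n}^l\mathbf{m}^{k}\mathbf{n}^{k}=\mathbf{n}^{l},\quad
\mathbf{m}^{k}\mathbf{n}^{k}\mathbf{m}^{l}=\mathbf{m}^{l},\quad
\mathbf{m}^l\mathbf{n}^{k}\mathbf{m}^{k}=\mathbf{m}^{l},\quad
\mathbf{n}^{k}\mathbf{m}^{k}\mathbf{n}^{l}=\mathbf{n}^{l}.
\]
The first two are exactly the statement of the Lemma immediately preceding this Corollary. The remaining two follow by swapping the roles of $\mathbf{n}$ and $\mathbf{m}$, which is legitimate since Corollary \ref{cor:rels} is symmetric in that swap (items (c),(d) are obtained from (a),(b) by interchange), and the Lemma's proof uses only the identities $\mathbf{n}^k\mathbf{m}^k\mathbf{n}^k=\mathbf{n}^k$ and $\mathbf{m}^k\mathbf{n}^k\mathbf{m}^k=\mathbf{m}^k$, which are themselves symmetric under that swap.

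There is essentially no obstacle here; the only thing to be careful about is the bookkeeping of $\max(0,k-l)$ versus $(k\oplus l)-l$. A one-line check confirms that $\max(0,k-l) = \max(k,l)-l = (k\oplus l)-l$ in both cases, and similarly $\max(k,l) = k\oplus l$, so the two formulations on the right-hand sides of the bullet points agree and match the case analysis above.
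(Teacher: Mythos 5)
Your proposal is correct and matches the paper's (implicit) argument: the corollary is exactly the combination of Corollary \ref{cor:rels} for $l\le k$ and the preceding lemma (with the roles of $\mathbf{n}$ and $\mathbf{m}$ interchangeable) for $l\ge k$, together with the observation that $\max(0,k-l)=(k\oplus l)-l$ and $\max(k,l)=k\oplus l$. Nothing further is needed.
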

\begin{cor}
\label{cor:allrelsstar}
Let
$\mathbf{n}$ be any nilpotent matrix  and let
$\mathbf{m}$ be its conjugate transpose and assume \(k,l \in\mathbb{N}, k\geq 1\).
Let \(k\oplus l=\max(k,l)\). Then
\begin{itemize}
\item \(\subs{\mathbf{n}}^l\subs{\mathbf{m}}^{k}\subs{\mathbf{n}}^{k}=\subs{(\mathbf{n}^{k}\mathbf{m}^{k}\mathbf{n}^l)}=\subs{(\mathbf{n}^{(k\oplus l)} \mathbf{m}^{(k\oplus l)-l})}=\subs{\mathbf{m}^{(k\oplus l)-l}}\subs{\mathbf{n}^{(k\oplus l)} }\)
\item \(\subs{\mathbf{m}}^{k}\subs{\mathbf{n}}^{k}\subs{\mathbf{m}}^{l}=\subs{(\mathbf{m}^l\mathbf{n}^{k}\mathbf{m}^{k})}=\subs{(\mathbf{n}^{(k\oplus l)-l} \mathbf{m}^{k\oplus l})}=\subs{ \mathbf{m}}^{k\oplus l}\subs{\mathbf{n}^{(k\oplus l)-l}}\).
\item \(\subs{\mathbf{m}^l}\subs{\mathbf{n}^{k}}\subs{\mathbf{m}^{k}}=\subs{(\mathbf{m}^{k}\mathbf{n}^{k}\mathbf{m}^{l})}=\subs{(\mathbf{m}^{(k\oplus l)} \mathbf{n}^{(k\oplus l)-l})}=\subs{\mathbf{n}^{(k\oplus l)-l}}\subs{\mathbf{m}^{(k\oplus l)}}\),
\item \(\subs{\mathbf{n}^{k}}\subs{\mathbf{m}^{k}}\subs{\mathbf{n}^{l}}=\subs{(\mathbf{n}^l\mathbf{m}^{k}\mathbf{n}^{k})}=\subs{(\mathbf{m}^{(k\oplus l)-l} \mathbf{n}^{k\oplus l})}=
\subs{ \mathbf{n}^{k\oplus l}}\subs{\mathbf{m}^{(k\oplus l)-l}}\).
\end{itemize}
\end{cor}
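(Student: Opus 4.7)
The plan is to reduce each of the four identities in Corollary \ref{cor:allrelsstar} to the corresponding matrix identity in Corollary \ref{cor:allrels} by repeatedly invoking the antihomomorphism property $\subs{A}\subs{B}=\subs{(BA)}$ from Lemma \ref{lem:PropSubsMult}, item 4, together with $\subs{A}^k=\subs{(A^k)}$ from item 2. The entire proof is essentially a careful bookkeeping exercise: one collapses a product of $\subs{}$-operators into a single $\subs{}$ applied to a matrix product (in the reversed order), simplifies that matrix product via Corollary \ref{cor:allrels}, and then re-expands it as a product of $\subs{}$-operators.

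Concretely, for the first identity I would first rewrite
\[
\subs{\mathbf{n}}^{l}\subs{\mathbf{m}}^{k}\subs{\mathbf{n}}^{k}
=\subs{(\mathbf{n}^{l})}\,\subs{(\mathbf{m}^{k})}\,\subs{(\mathbf{n}^{k})}
\]
using item 2 of Lemma \ref{lem:PropSubsMult}. Then two applications of the antihomomorphism rule (item 4) give
\[
\subs{(\mathbf{n}^{l})}\,\subs{(\mathbf{m}^{k})}\,\subs{(\mathbf{n}^{k})}
=\subs{(\mathbf{n}^{k}\mathbf{m}^{k}\mathbf{n}^{l})}.
\]
Corollary \ref{cor:allrels}, item d, simplifies $\mathbf{n}^{k}\mathbf{m}^{k}\mathbf{n}^{l}=\mathbf{n}^{(k\oplus l)}\mathbf{m}^{(k\oplus l)-l}$, and re-expanding the resulting $\subs{}$ with the antihomomorphism in the opposite direction yields $\subs{\mathbf{m}^{(k\oplus l)-l}}\,\subs{\mathbf{n}^{(k\oplus l)}}$, which is the claim.

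The remaining three identities are proved in exactly the same way, each one matched to one of the four items of Corollary \ref{cor:allrels}: the second identity uses item a (giving $\mathbf{m}^{l}\mathbf{n}^{k}\mathbf{m}^{k}=\mathbf{n}^{(k\oplus l)-l}\mathbf{m}^{k\oplus l}$), the third uses item b, and the fourth uses item c. The only point requiring any attention is the reversal of the order of the matrices caused by the antihomomorphism, which is what dictates which of the four matrix identities of Corollary \ref{cor:allrels} is applied in each case; I would tabulate these correspondences at the start so the reader can verify that the pairing is correct.

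There is no real obstacle here: everything is mechanical once one accepts that $A\mapsto \subs{A}$ is an associative-algebra antihomomorphism. The only mild pitfall is a sign- or order-type bookkeeping error in matching the four cases of Corollary \ref{cor:allrelsstar} to the four cases of Corollary \ref{cor:allrels}, which is why I would present the correspondence explicitly before doing the four nearly identical computations.
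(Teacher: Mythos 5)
Your proposal is correct and is essentially the paper's own argument: the corollary carries no separate proof precisely because its displayed chain of equalities is obtained exactly as you describe, by collapsing the starred product via \(\subs{A}\subs{B}=\subs{(BA)}\) and \(\subs{A}^{k}=\subs{(A^{k})}\) (Lemma \ref{lem:PropSubsMult}, items 2 and 4), simplifying the resulting matrix product with Corollary \ref{cor:allrels}, and re-expanding. One cosmetic slip in your tabulation: the second and fourth identities invoke items (c) and (a) of Corollary \ref{cor:allrels}, respectively (you swapped those two labels), though the formula you actually quote for the second case is the correct one, so the argument itself stands.
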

\begin{lem}\label{lem:kermmm}
Let
$\mathbf{n}$ be any nilpotent matrix  and let
$\mathbf{m}$ be its conjugate transpose and assume \(i,l \in\mathbb{N}, i\geq 1\).
Let \(\pi_i^l=\mathbf{n}^{l}\mathbf{m}^{i}\mathbf{n}^{i-l}\).
Then \(\pi_i^l\) is a projection operator.
\end{lem}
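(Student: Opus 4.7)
The plan is to verify that $(\pi_i^l)^2 = \pi_i^l$ by direct expansion, reducing the computation to the key identity $\mathbf{m}^i\mathbf{n}^i\mathbf{m}^i=\mathbf{m}^i$ which was established at the end of Corollary \ref{cor:rels}.

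First I would write out
\begin{equation*}
(\pi_i^l)^2 = \bigl(\mathbf{n}^l\mathbf{m}^i\mathbf{n}^{i-l}\bigr)\bigl(\mathbf{n}^l\mathbf{m}^i\mathbf{n}^{i-l}\bigr)
= \mathbf{n}^l\mathbf{m}^i\,\mathbf{n}^{i-l}\mathbf{n}^l\,\mathbf{m}^i\mathbf{n}^{i-l}.
\end{equation*}
Since the powers of $\mathbf{n}$ in the middle combine as $\mathbf{n}^{i-l}\mathbf{n}^l=\mathbf{n}^i$ (this is where the implicit assumption $0\le l\le i$ is used, so that $i-l\ge 0$ and we really have a product of non-negative powers), this simplifies to
\begin{equation*}
(\pi_i^l)^2 = \mathbf{n}^l\,\bigl(\mathbf{m}^i\mathbf{n}^i\mathbf{m}^i\bigr)\,\mathbf{n}^{i-l}.
\end{equation*}

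Next I would invoke the identity $\mathbf{m}^i\mathbf{n}^i\mathbf{m}^i=\mathbf{m}^i$ which appears at the bottom of Corollary \ref{cor:rels} as the special case $l=k$ of item (b) (or equivalently follows from item (c) applied to the conjugate transpose setup). Substituting this collapses the middle factor and yields
\begin{equation*}
(\pi_i^l)^2 = \mathbf{n}^l\mathbf{m}^i\mathbf{n}^{i-l} = \pi_i^l,
\end{equation*}
which is exactly the idempotency relation defining a projection operator.

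I do not expect any serious obstacle here: the whole argument rests on one algebraic cancellation, namely the triple-product identity $\mathbf{m}^i\mathbf{n}^i\mathbf{m}^i=\mathbf{m}^i$ which has already been proven. The only subtle point worth remarking on is the implicit range $0\le l\le i$ needed for $\mathbf{n}^{i-l}$ to make sense as a nonnegative power; under that standing assumption, no case analysis on $\max(i-l,l)$ or use of the $\oplus$-notation from Corollary \ref{cor:allrels} is necessary.
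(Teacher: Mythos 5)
Your proof is correct and follows essentially the same route as the paper: combine the middle powers to get $\mathbf{n}^{i-l}\mathbf{n}^{l}=\mathbf{n}^{i}$ and then collapse $\mathbf{m}^{i}\mathbf{n}^{i}\mathbf{m}^{i}=\mathbf{m}^{i}$ from Corollary \ref{cor:rels}. Your explicit remark about the implicit range $0\le l\le i$ is a sensible addition but does not change the argument.
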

\begin{proof}
\bas
\pi_i^l\cdot \pi_i^l&=& \mathbf{n}^{l}\mathbf{m}^{i}\mathbf{n}^{i-l}\mathbf{n}^{l}\mathbf{m}^{i}\mathbf{n}^{i-l}
\\&=& \mathbf{n}^{l}\mathbf{m}^{i}\mathbf{n}^{i}\mathbf{m}^{i}\mathbf{n}^{i-l}
\\&=& \mathbf{n}^{l}\mathbf{m}^{i}\mathbf{n}^{i-l}
\\&=&\pi_i^l.
\eas
\end{proof}
We need the following Lemma in the proof of Lemma \ref{lem:proj}.
\begin{lem}\label{lem:kermm}
Let \(\nN\) be in Jordan normal form and let \(P_i^l=\nN^{l}\nM^{i}\nN^{i-l}, p>i>l\geq 0\).
Then \(P_i^l\) is a diagonal projection operator and it projects on the \(p-i\)-dimensional space spanned by \(\langle e_{i-l+1},\cdots,e_{p-l}\rangle\).
In particular, \(\varpi_{l}=P_{p-1}^{p-l}=\nN^{p-l}\nM^{p-1}\nN^{l-1}\) projects on \(e_{l}\) and the \(\varpi_{l}, l=1,\cdots,p\) are complete: \[
\sum_{l=1}^{p} \varpi_{l}=id_p.\]
Furthermore, \(P_i^l=\sum_{j=i-l+1}^{p-l} \varpi_j\)
and \(\mathcal{E}_p:=1+\sum_{i=2}^{p-1} \sum_{l=0}^{i-1} P_i^l=\varpi_{1}+\sum_{j=2}^{p} (p-j+1) (j-1) \varpi_j\).
\end{lem}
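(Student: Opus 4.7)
The plan is to reduce everything to bookkeeping in the Jordan basis. In that basis $\nN$ and $\nM$ are the standard shift operators: $\nN e_j = e_{j-1}$ for $j\geq 2$, $\nN e_1 = 0$, while $\nM e_j = e_{j+1}$ for $j\leq p-1$, $\nM e_p = 0$. First I would chase the basis vector $e_j$ through the composition $P_i^l = \nN^{l}\nM^{i}\nN^{i-l}$: applying $\nN^{i-l}$ gives $e_{j-(i-l)}$ provided $j\geq i-l+1$ (otherwise zero); then $\nM^i$ shifts up to $e_{j+l}$ provided $j+l\leq p$; finally $\nN^{l}$ sends this back to $e_j$. Combining the two nontriviality conditions, $P_i^l e_j = e_j$ if $i-l+1\leq j\leq p-l$ and $P_i^l e_j = 0$ otherwise. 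This simultaneously shows that $P_i^l$ is represented by a diagonal $0/1$ matrix (hence a projection, consistent with Lemma \ref{lem:kermmm}) and identifies its image as the $(p-i)$-dimensional span of $\{e_{i-l+1},\ldots,e_{p-l}\}$.

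From this characterization the remaining claims are immediate. Setting $i=p-1$ and replacing $l$ by $p-l$ collapses the index range to the single value $j=l$, so $\varpi_l = P_{p-1}^{p-l}$ is the rank-one projection onto $e_l$. Since the $\varpi_l$ project onto distinct basis vectors spanning $\mathbb{R}^p$, completeness $\sum_{l=1}^p \varpi_l = \operatorname{id}_p$ follows at once, and the resolution $P_i^l = \sum_{j=i-l+1}^{p-l}\varpi_j$ is the reassembly of the image description in terms of the rank-one pieces.

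For the formula for $\mathcal{E}_p$ I would substitute the decomposition of each $P_i^l$ and collect the coefficient of $\varpi_j$:
\[
\mathcal{E}_p \;=\; \sum_{j=1}^{p}\varpi_j \;+\; \sum_{i=2}^{p-1}\sum_{l=0}^{i-1}\sum_{j=i-l+1}^{p-l}\varpi_j \;=\; \sum_{j=1}^p c_j\,\varpi_j,
\]
where $c_j = 1 + N_j$ and $N_j$ counts pairs $(i,l)$ with $2\leq i\leq p-1$, $0\leq l\leq i-1$, $i-l+1\leq j\leq p-l$. The hard step is this combinatorial count. I would change variables to $a:=i-l\geq 1$, which converts the three inequalities on $(i,l)$ into the clean constraint $1\leq a\leq j-1$ together with $\max(0,2-a)\leq l\leq \min(p-1-a,\,p-j)$. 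Splitting on whether $a=1$ or $a\geq 2$ removes all the $\max$/$\min$ signs: $a=1$ contributes $p-j$ values of $l$, and each of $a=2,\ldots,j-1$ contributes $p-j+1$ values. Summing yields $N_j = (p-j)+(j-2)(p-j+1) = (j-1)(p-j+1)-1$ for $j\geq 2$ (and $N_1 = 0$), whence $c_j = (j-1)(p-j+1)$ for $j\geq 2$ and $c_1=1$. This is exactly the asserted identity.

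The only genuine obstacle is the case split in the last paragraph — I would expect the direct evaluation of the triple sum, without the substitution $a=i-l$, to be error-prone because the $\max$ and $\min$ enter the range of $l$ asymmetrically in $j$; introducing $a$ makes the dependence on $j$ enter only through the upper endpoints, and the answer then falls out as a product.
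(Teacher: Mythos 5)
Your proof is correct and follows essentially the same route as the paper: both arguments reduce everything to the rank-one projections \(\varpi_j\) and compute the coefficient of each \(\varpi_j\) in \(\mathcal{E}_p\), your substitution \(a=i-l\) with the split \(a=1\) versus \(a\ge 2\) being just a reorganization of the paper's interchange of the triple summation (the paper absorbs the \(i=1\) term into the leading \(1\) to avoid the case split). The only difference is explicitness: you verify the diagonal \(0/1\) action and the image \(\langle e_{i-l+1},\ldots,e_{p-l}\rangle\) by chasing basis vectors, where the paper invokes Lemma \ref{lem:kermmm} for idempotency and states that diagonality follows from the Jordan normal form.
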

\begin{proof}
The projection part follows from Lemma \ref{lem:kermmm}.
Diagonality follows from the Jordan normal form of \(\nN\) and \(\nM\).
Then, assuming \(\nN\) is irreducible,
\bas
\mathcal{E}_p&=&1+\sum_{i=2}^{p-1} \sum_{l=0}^{i-1} P_i^l 
=
\sum_{j=1}^{p} \varpi_{j}+\sum_{i=2}^{p-1} \sum_{l=0}^{i-1} \sum_{j=i-l+1}^{p-l} \varpi_j
=
\varpi_{1}+\sum_{i=1}^{p-1} \sum_{l=0}^{i-1} \sum_{j=i-l+1}^{p-l} \varpi_j=
\\&=&
\varpi_{1}+\sum_{l=0}^{p-2} \sum_{i=l+1}^{p-1} \sum_{j=i-l+1}^{p-l} \varpi_j
=
\varpi_{1}+\sum_{l=0}^{p-2} \sum_{j=2}^{p-l} \sum_{i=l+1}^{j+l-1} \varpi_j
=
\varpi_{1}+\sum_{l=0}^{p-2} \sum_{j=2}^{p-l} (j-1) \varpi_j=
\\&=&
\varpi_{1}+\sum_{j=2}^{p} \sum_{l=0}^{p-j} (j-1) \varpi_j
=
\varpi_{1}+\sum_{j=2}^{p} (p-j+1) (j-1) \varpi_j,
\eas
with trace 
\(1+\frac{1}{6}p(p+1)(p-1)\). 
\end{proof}
\begin{lem}\label{lem:traceP}
\(\mathrm{Tr} P_i^l=p-i\).
\end{lem}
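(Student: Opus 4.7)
The statement is a direct corollary of the structure of \(P_i^l\) already extracted in Lemma~\ref{lem:kermm}, so the plan is short. The key observation is that the trace of a projection operator equals the dimension of its image; hence I would avoid any matrix computation and instead read off the rank of \(P_i^l\) from the spanning description given in Lemma~\ref{lem:kermm}.

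Concretely, by Lemma~\ref{lem:kermm} the operator \(P_i^l=\nN^{l}\nM^{i}\nN^{i-l}\) is a diagonal projection onto the subspace \(\langle e_{i-l+1},\ldots,e_{p-l}\rangle\). I would then just count:
\begin{equation*}
\dim\langle e_{i-l+1},\ldots,e_{p-l}\rangle=(p-l)-(i-l+1)+1=p-i.
\end{equation*}
Since \(P_i^l\) is idempotent, \(\mathrm{Tr}\,P_i^l=\mathrm{rank}\,P_i^l=p-i\), which is the claim.

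As a sanity check I would verify this via the alternative decomposition also established in Lemma~\ref{lem:kermm}, namely \(P_i^l=\sum_{j=i-l+1}^{p-l}\varpi_j\), where each \(\varpi_j\) is a rank-one projector onto \(e_j\) with \(\mathrm{Tr}\,\varpi_j=1\). Summing gives \(\mathrm{Tr}\,P_i^l=(p-l)-(i-l+1)+1=p-i\), matching the count above.

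There is no real obstacle here; everything has already been done in Lemma~\ref{lem:kermm}. The only thing to be careful about is the index arithmetic in the cardinality of \(\{i-l+1,\ldots,p-l\}\), and implicitly the assumption \(p>i>l\geq 0\) inherited from Lemma~\ref{lem:kermm}, which ensures the index range is nonempty and the formula \(p-i\) is nonnegative.
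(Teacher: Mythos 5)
Your proof is correct, but it takes a slightly different route than the paper. The paper's own argument never touches the fine structure of \(P_i^l\) established in Lemma \ref{lem:kermm}: it simply uses the cyclic invariance of the trace, \(\mathrm{Tr}(\nN^{l}\nM^{i}\nN^{i-l})=\mathrm{Tr}(\nM^{i}\nN^{i-l}\nN^{l})=\mathrm{Tr}(\nM^{i}\nN^{i})\), and then reads off \(p-i\) from the earlier explicit computation showing that \(\nM^{i}\nN^{i}\) is the diagonal projection onto the last \(p-i\) coordinates. You instead invoke the full statement of Lemma \ref{lem:kermm} (that \(P_i^l\) is a diagonal projection onto \(\langle e_{i-l+1},\ldots,e_{p-l}\rangle\), equivalently \(P_i^l=\sum_{j=i-l+1}^{p-l}\varpi_j\)) and count the rank, using that an idempotent's trace equals its rank. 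Both arguments are sound and equally short; the paper's version has the mild advantage of needing only the projection property of \(\nM^{i}\nN^{i}\) (and so does not depend on the identification of the precise image of \(P_i^l\), which is the more delicate part of Lemma \ref{lem:kermm}), while yours makes the geometric content — trace equals dimension of the image — explicit and doubles as a consistency check on the index bookkeeping in Lemma \ref{lem:kermm}. Your attention to the standing hypothesis \(p>i>l\geq 0\) is appropriate, since both proofs implicitly work with a single Jordan block of size \(p\).
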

\begin{proof}
Since \(\nM^{i}\nN^{i}\) projects on the last \(p-i\) coordinates, one finds
\bas
\mathrm{Tr}(P_i^l)&=&\mathrm{Tr}(\nN^{l}\nM^{i}\nN^{i-l})=\mathrm{Tr}(\nM^{i}\nN^{i})=p-i.
\eas
\end{proof}

\section{An $\mathfrak{sl}_{2}$-triple for a nilpotent matrix}\label{sec:sl2triple}
In the following Theorem \ref{thm:SL2Mat} an explicit construction of an 
$\mathfrak{sl}_{2}$-triple for a nilpotent matrix 
$\mathbf{n}\in \mathfrak{gl}_{n}(\mathbb{R})$ is given, using its conjugate transpose \(\mathbf{m}\) as defined in Definition \ref{def:m}.
It follows from Lemmas \ref{lem:kermm}
and \ref{lem:proj}
that this construction coincides with the triples
constructed in \cite[Section 2.5]{murdock2006normal} for nilpotent matrices in  Jordan form.

Before we can prove the theorem, we need the technical Lemma \ref{lem:cases}.
In the proof of Theorem \ref{thm:SL2Mat}, the computation of the relation 
$[\overline{\mathbf{h}},\overline{\mathbf{m}}]=2\overline{\mathbf{m}}$ is done
by showing that this relation holds for the generators of 
$\mathbf{m}$. 
The generators are given by the expression 
$\mathbf{n}^{l}\mathbf{m}^{i}\mathbf{n}^{i-l-1}$ and the Lie bracket to be
computed is given by 
\begin{equation}
\left[
\left[ 
\mathbf{m}^{k},
\mathbf{n}^{k}
\right],
\mathbf{n}^{l}\mathbf{m}^{i}\mathbf{n}^{i-l-1}
\right].
\label{eqn:bracket}
\end{equation}
Here 
$\mathbf{n}\in \mathfrak{gl}_{n}(\mathbb{R})$ is a nilpotent matrix with nilpotency index 
$p\in \mathbb{N}$ and 
$\mathbf{m}$ is as in Definition \ref{def:m}. 
The powers of 
$\mathbf{n}$ and 
$\mathbf{m}$ satisfy the relations
$1\le i\le p$,
$0\le l \le i$ and 
$1\le k\le p$.
The following Lemma computes the Lie bracket \eqref{eqn:bracket}.

\begin{lem}
\label{lem:cases}
Let 
$\mathbf{n}\in  \mathfrak{gl}_{n}(\mathbb{R})$ be a nilpotent matrix with nilpotency
index 
$p$ and 
$\mathbf{m}$ be as in Definition \ref{def:m}. 
Fix 
$i,l\in \mathbb{N}$ such that 
$0\le i,l \le p$.
Then for any natural number  
$1 \le k \le p$ the Lie bracket \eqref{eqn:bracket} equals either one
of the following.  
\begin{itemize}
\item[(a)] If 
$k\le \min\left\{ l,i-l-1 \right\}$ then 
\[
\left[ 
[\mathbf{m}^{k},\mathbf{n}^{k}], 
\mathbf{n}^{l}\mathbf{m}^{i}\mathbf{n}^{i-l-1}
\right]
= 0.
\]
\item[(b)] If 
$l<k\le i-l-1$, then 
\[
\left[ 
[\mathbf{m}^{k},\mathbf{n}^{k}], 
\mathbf{n}^{l}\mathbf{m}^{i}\mathbf{n}^{i-l-1}
\right]
= 
\mathbf{n}^{k-1}\mathbf{m}^{k+i-l-1}\mathbf{n}^{i-l-1}
-
\mathbf{n}^{k}\mathbf{m}^{k+i-l}\mathbf{n}^{i-l-1}.  
\]
\item[(c)]
If 
$i-l-1 < k \le l$, then 
\[
\left[ 
[\mathbf{m}^{k},\mathbf{n}^{k}], 
\mathbf{n}^{l}\mathbf{m}^{i}\mathbf{n}^{i-l-1}
\right]
= 
\mathbf{n}^{l}\mathbf{m}^{k+l}\mathbf{n}^{k-1} - 
\mathbf{n}^{l}\mathbf{m}^{k+l+1}\mathbf{n}^{k}.
\]
\item[(d)] 
If 
$ k > \max \left\{ l,i-l-1 \right\}$, then 
\[
\left[ 
[\mathbf{m}^{k},\mathbf{n}^{k}], 
\mathbf{n}^{l}\mathbf{m}^{i}\mathbf{n}^{i-l-1}
\right]
= 
\mathbf{n}^{k-1}\mathbf{m}^{k+i-l-1}\mathbf{n}^{i-l-1}
-
\mathbf{n}^{k}\mathbf{m}^{k+i-l}\mathbf{n}^{i-l-1}\\
+
\mathbf{n}^{l}\mathbf{m}^{k+l}\mathbf{n}^{k-1} - 
\mathbf{n}^{l}\mathbf{m}^{k+l+1}\mathbf{n}^{k}.
\]
\end{itemize}
\end{lem}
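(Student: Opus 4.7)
My plan is to exploit the factorisation $X := \mathbf{n}^l\mathbf{m}^i\mathbf{n}^{i-l-1} = R_l\,\mathbf{m}\,Q_b$, where $b:=i-l-1$ and I abbreviate $R_k := \mathbf{n}^k\mathbf{m}^k$, $Q_k := \mathbf{m}^k\mathbf{n}^k$; these are the projections of Lemma \ref{lem:kermmm} (corresponding to $\pi_k^k$ and $\pi_k^0$). The factorisation is just the identity $\mathbf{m}^i=\mathbf{m}^l\cdot\mathbf{m}\cdot\mathbf{m}^{b}$ with $l+1+b=i$. Since $[\mathbf{m}^k,\mathbf{n}^k]=Q_k-R_k$, the bracket to compute equals $[Q_k,X]-[R_k,X]$, so it suffices to compute those two pieces.

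For this, I would record three elementary facts: (i) $Q_k\mathbf{m}=\mathbf{m}\,Q_{k-1}$ and $\mathbf{m}\,R_k = R_{k-1}\mathbf{m}$ for $k\ge 1$, both immediate from Cor.~\ref{cor:allrels} with the parameter $l=1$; (ii) the projections multiply as $R_jR_k=R_{\max(j,k)}$ and $Q_jQ_k=Q_{\max(j,k)}$, and $R_k$ commutes with every $Q_l$ --- transparent in the Jordan basis of Definition \ref{def:m}, where the $R_k, Q_l$ are diagonal projections onto standard coordinate subspaces, preserved under conjugation. Substituting these into the direct expansion yields
\[
[Q_k,X] = R_l\,\mathbf{m}\bigl(Q_{\max(k-1,b)}-Q_{\max(k,b)}\bigr),\qquad
[R_k,X] = \bigl(R_{\max(k,l)}-R_{\max(k-1,l)}\bigr)\mathbf{m}\,Q_b.
\]
Reading off the maxima, $[Q_k,X]$ vanishes when $k\le b$ and otherwise telescopes to $R_l\mathbf{m}(Q_{k-1}-Q_k)=\mathbf{n}^l\mathbf{m}^{k+l}\mathbf{n}^{k-1}-\mathbf{n}^l\mathbf{m}^{k+l+1}\mathbf{n}^k$; similarly $[R_k,X]$ vanishes when $k\le l$ and otherwise equals $(R_k-R_{k-1})\mathbf{m}\,Q_b=\mathbf{n}^k\mathbf{m}^{k+b+1}\mathbf{n}^b-\mathbf{n}^{k-1}\mathbf{m}^{k+b}\mathbf{n}^b$.

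The four cases (a)--(d) of the lemma are then precisely the four vanishing patterns: in (a) both commutators vanish; in (b) only $[R_k,X]$ survives, contributing with a minus sign; in (c) only $[Q_k,X]$ survives; in (d) neither vanishes and one obtains the sum of all four monomials. Substituting $b=i-l-1$ in each case reproduces the stated formulas verbatim. The main obstacle is bookkeeping at the edges $k=b+1$ or $k=l+1$, where one of the maxima equals the other's predecessor; fortunately, both the edge and the interior subcase produce the same telescoping difference, so no separate treatment is required. If one prefers to avoid passing to the Jordan basis to get projection commutativity $R_lQ_k=Q_kR_l$, one can argue algebraically by squaring either side and invoking $\mathbf{m}^{l+k}\mathbf{n}^{l+k}\mathbf{m}^{l+k}=\mathbf{m}^{l+k}$ from Cor.~\ref{cor:rels} to conclude that both sides are idempotents with identical image.
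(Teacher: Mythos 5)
Your argument is correct, and it organizes the computation differently from the paper. The paper's proof expands the double bracket into the four monomials $\mathbf{m}^k\mathbf{n}^{k+l}\mathbf{m}^i\mathbf{n}^{i-l-1}$, $\mathbf{n}^k\mathbf{m}^k\mathbf{n}^l\mathbf{m}^i\mathbf{n}^{i-l-1}$, $\mathbf{n}^l\mathbf{m}^i\mathbf{n}^{i-l-1}\mathbf{m}^k\mathbf{n}^k$, $\mathbf{n}^l\mathbf{m}^i\mathbf{n}^{i-l-1+k}\mathbf{m}^k$ and reduces each one separately with Corollary \ref{cor:allrels}, discussing the hypotheses term by term; you instead factor the generator as $X=R_l\,\mathbf{m}\,Q_b$ with $R_j=\mathbf{n}^j\mathbf{m}^j$, $Q_j=\mathbf{m}^j\mathbf{n}^j$, $b=i-l-1$, and compute $[Q_k,X]$ and $[R_k,X]$ in closed form as differences indexed by maxima, so that the four cases of the lemma are exactly the vanishing patterns of these two commutators. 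Your intertwining and semigroup identities ($Q_k\mathbf{m}=\mathbf{m}Q_{k-1}$, $\mathbf{m}R_k=R_{k-1}\mathbf{m}$, $R_jR_k=R_{\max(j,k)}$, $Q_jQ_k=Q_{\max(j,k)}$) do follow from Corollaries \ref{cor:rels} and \ref{cor:allrels} as you indicate, and your one extra ingredient, $R_jQ_k=Q_kR_j$, is not needed in the paper's proof but is legitimate: your $R_k,Q_k$ are the $\pi_k^k,\pi_k^0$ of Lemma \ref{lem:kermmm}, in the Jordan basis they are diagonal $0$--$1$ projections (as in Lemma \ref{lem:kermm}), and commutativity survives the simultaneous conjugation by $P$ of Definition \ref{def:m}. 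What your route buys is uniformity: the edge values $k=l+1$ and $k=b+1$ require no separate treatment, and the sign bookkeeping reduces to which of the two commutators survives, whereas the paper tracks, for each of its four terms, whether it collapses to $\mathbf{n}^l\mathbf{m}^i\mathbf{n}^{i-l-1}$ or to one of the surviving monomials; the price is the additional commutation fact, which forces a brief detour through the Jordan basis. One caveat: your closing aside is not sound as stated, since two idempotents with the same image need not be equal (they must also share the same kernel), so squaring and comparing images does not by itself yield $R_lQ_k=Q_kR_l$; this does not affect your proof, because the diagonality argument you give first already settles the commutation.
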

\begin{proof}
The proof proceeds by expanding \eqref{eqn:bracket} for each of the four
cases and then we use Corollary \ref{cor:rels} to rewrite each of the terms
to obtain the identities.
We first expand \eqref{eqn:bracket} as 
\begin{equation}
\mathbf{m}^{k}\mathbf{n}^{k+l}
\mathbf{m}^{i}\mathbf{n}^{i-l-1}
-
\mathbf{n}^{k}\mathbf{m}^{k} 
\mathbf{n}^{l}\mathbf{m}^{i}\mathbf{n}^{i-l-1}
-
\mathbf{n}^{l}\mathbf{m}^{i}\mathbf{n}^{i-l-1}
\mathbf{m}^{k}\mathbf{n}^{k}
+
\mathbf{n}^{l}\mathbf{m}^{i}\mathbf{n}^{i-l-1+k}
\mathbf{m}^{k}.
\label{eqn:bracket2}
\end{equation}
For the first term in \eqref{eqn:bracket2} we find 
\begin{eqnarray*}
\underline{\mathbf{m}^{k}\mathbf{n}^{k+l}} \mathbf{m}^{i}\mathbf{n}^{i-l-1}
&= &
\mathbf{n}^{l}\underline{\mathbf{m}^{k+l}\mathbf{n}^{k+l}\mathbf{m}^{i}}\mathbf{n}^{i-l-1}\\
&= &
\mathbf{n}^{l}\mathbf{m}^{(k+l)\oplus i}\mathbf{n}^{((k+l)\oplus i)-i}\mathbf{n}^{i-l-1}\\
&= &
\mathbf{n}^{l}\mathbf{m}^{(k+l)\oplus i}\mathbf{n}^{((k+l)\oplus i)-l-1}\\
&= &
\mathbf{n}^{l}\mathbf{m}^{(k+l)\oplus i}\mathbf{n}^{(k-1)\oplus (i-l-1)}
\end{eqnarray*}
where we underline terms that we combine to apply Corollary \ref{cor:allrels}.
If we assume that $k\le i-l-1 $, this reduces to \(\mathbf{n}^{l}\mathbf{m}^{i}\mathbf{n}^{i-l-1}\).
If we assume that $k> i-l-1 $, this reduces to \(\mathbf{n}^{l}\mathbf{m}^{k+l}\mathbf{n}^{k-1}\).

For the second term in \eqref{eqn:bracket2}  we find 
\begin{eqnarray*}
\underline{\mathbf{n}^{k}\mathbf{m}^{k}\mathbf{n}^{l}}\mathbf{m}^{i}\mathbf{n}^{i-l-1}
&= &
\mathbf{n}^{(k\oplus l)} \mathbf{m}^{(k\oplus l)-l+i}\mathbf{n}^{i-l-1}\\
&= &
\mathbf{n}^{k\oplus l} \mathbf{m}^{(k-l+i)\oplus i}\mathbf{n}^{i-l-1}\\
\end{eqnarray*}
If we assume that $k\le l $, this reduces to \(\mathbf{n}^{l}\mathbf{m}^{i}\mathbf{n}^{i-l-1}\).
\newline
If we assume that $k> l $, this reduces to \(\mathbf{n}^{k} \mathbf{m}^{k-l+i}\mathbf{n}^{i-l-1}\).

For the third term in \eqref{eqn:bracket2} we find 
\begin{eqnarray*}
\mathbf{n}^{l}\mathbf{m}^{i}\underline{\mathbf{n}^{i-l-1}\mathbf{m}^{k}\mathbf{n}^{k}}
&=& 
\mathbf{n}^{l}\mathbf{m}^{(k\oplus (i-l-1))+l+1} \mathbf{n}^{k\oplus (i-l-1)}
\\&=& 
\mathbf{n}^{l}\mathbf{m}^{(k+l+1)\oplus i} \mathbf{n}^{k\oplus (i-l-1)}
\end{eqnarray*}
If we assume that $k\le i-l-1 $, this reduces to \(\mathbf{n}^{l}\mathbf{m}^{i}\mathbf{n}^{i-l-1}\).
\newline
If we assume that $k> i-l-1 $, this reduces to \(\mathbf{n}^{l}\mathbf{m}^{k+l+1} \mathbf{n}^{k}\).

For the fourth term in \eqref{eqn:bracket2} we calculate
\begin{eqnarray*}
\mathbf{n}^{l}\mathbf{m}^{i}\mathbf{n}^{i-l-1+k}\mathbf{m}^{k}
&=&
\mathbf{n}^{l}\mathbf{m}^{i}\underline{\mathbf{n}^{i-l-1+k}\mathbf{m}^{k}}
\\&=&
\mathbf{n}^{l}\underline{\mathbf{m}^{i}\mathbf{n}^{k+i-l-1}\mathbf{m}^{k+i-l-1}}\mathbf{n}^{i-l-1}
\\&=&
\mathbf{n}^{l+\max(k-l-1,0)}\mathbf{m}^{i+\max(k-l-1,0)}\mathbf{n}^{i-l-1}
\end{eqnarray*}
If we assume that $k\le l $, this reduces to \(\mathbf{n}^{l}\mathbf{m}^{i}\mathbf{n}^{i-l-1}\).
But if \(l<k\), it reduces to \(\mathbf{n}^{k-1}\mathbf{m}^{i+k-l-1}\mathbf{n}^{i-l-1}\).

By adding the four terms with the appropriate signs and keeping track of the inequalities, we see that we have proved the Lemma.
\end{proof}

\begin{thm}
\label{thm:SL2Mat}
Let 
$\mathbf{n}\in \mathfrak{gl}_{n}(\mathbb{R})$ be a nilpotent matrix  with nilpotency index 
$p\ge 2$ and let 
$\mathbf{m}$ its conjugate transpose, as defined in Definition \ref{def:m}.
Then the triple 
$\langle \bar{\mathbf{n}},\bar{\mathbf{h}}, \bar{\mathbf{m}} \rangle$,
where  
\ba
\bar{\mathbf{n}}
:= 
\mathbf{n},
\quad
\bar{\mathbf{m}} 
:= 
\sum_{i=1}^{p-1} 
\sum_{l=0}^{i-1}
\mathbf{n}^{l}\mathbf{m}^{i}\mathbf{n}^{i-l-1},
\quad 
\bar{\mathbf{h}} 
:=
\sum_{i=1}^{p-1}
\left[
\mathbf{m}^{i},
\mathbf{n}^{i}
\right],
\label{eqn:SL2Mat}
\ea
is an 
$\mathfrak{sl}_{2}$-triple.
\end{thm}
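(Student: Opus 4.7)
My plan is to verify the three defining $\mathfrak{sl}_2$ relations in turn, namely
\[
[\bar{\mathbf{m}},\bar{\mathbf{n}}]=\bar{\mathbf{h}},\qquad [\bar{\mathbf{h}},\bar{\mathbf{n}}]=-2\bar{\mathbf{n}},\qquad [\bar{\mathbf{h}},\bar{\mathbf{m}}]=2\bar{\mathbf{m}},
\]
each by a telescoping computation, with Lemma~\ref{lem:cases} supplying the algebra needed for the hardest identity.

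For the first relation I would simply expand
\[
[\bar{\mathbf{m}},\bar{\mathbf{n}}]=\sum_{i=1}^{p-1}\sum_{l=0}^{i-1}\bigl(\mathbf{n}^{l}\mathbf{m}^{i}\mathbf{n}^{i-l}-\mathbf{n}^{l+1}\mathbf{m}^{i}\mathbf{n}^{i-l-1}\bigr).
\]
For each fixed $i$ the inner sum telescopes in $l$, collapsing to $\mathbf{m}^{i}\mathbf{n}^{i}-\mathbf{n}^{i}\mathbf{m}^{i}=[\mathbf{m}^{i},\mathbf{n}^{i}]$. Summing over $i$ produces $\bar{\mathbf{h}}$ on the nose.

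For the second relation I would compute, term by term in $i$,
\[
[[\mathbf{m}^{i},\mathbf{n}^{i}],\mathbf{n}]=\mathbf{m}^{i}\mathbf{n}^{i+1}-\mathbf{n}\,\mathbf{m}^{i}\mathbf{n}^{i}-\mathbf{n}^{i}\mathbf{m}^{i}\mathbf{n}+\mathbf{n}^{i+1}\mathbf{m}^{i},
\]
and apply Corollary~\ref{cor:allrels} (specifically $\mathbf{n}\mathbf{m}^{i}\mathbf{n}^{i}=\mathbf{m}^{i-1}\mathbf{n}^{i}$ and $\mathbf{n}^{i}\mathbf{m}^{i}\mathbf{n}=\mathbf{n}^{i}\mathbf{m}^{i-1}$) to rewrite this as
\[
\bigl(\mathbf{m}^{i}\mathbf{n}^{i+1}+\mathbf{n}^{i+1}\mathbf{m}^{i}\bigr)-\bigl(\mathbf{m}^{i-1}\mathbf{n}^{i}+\mathbf{n}^{i}\mathbf{m}^{i-1}\bigr).
\]
The sum over $i=1,\dots,p-1$ then telescopes, leaving only the top term (which vanishes because $\mathbf{n}^{p}=0$) and the bottom term $-(\mathbf{n}+\mathbf{n})=-2\mathbf{n}$.

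The main obstacle is the third relation, and this is where Lemma~\ref{lem:cases} earns its keep. For each fixed pair $(i,l)$ with $0\le l\le i-1$ I would sum the bracket $[[\mathbf{m}^{k},\mathbf{n}^{k}],\mathbf{n}^{l}\mathbf{m}^{i}\mathbf{n}^{i-l-1}]$ over $k=1,\dots,p-1$ using the four-case formula of Lemma~\ref{lem:cases}. The pattern is that the expression $\mathbf{n}^{k-1}\mathbf{m}^{k+i-l-1}\mathbf{n}^{i-l-1}-\mathbf{n}^{k}\mathbf{m}^{k+i-l}\mathbf{n}^{i-l-1}$ contributes precisely when $k>l$ (cases (b) and (d)), while $\mathbf{n}^{l}\mathbf{m}^{k+l}\mathbf{n}^{k-1}-\mathbf{n}^{l}\mathbf{m}^{k+l+1}\mathbf{n}^{k}$ contributes precisely when $k>i-l-1$ (cases (c) and (d)). Each of these two partial sums telescopes in $k$; the boundary terms at $k=p-1$ involve $\mathbf{m}^{p+\text{(nonneg)}}$, which vanish since $\mathbf{m}^{p}=0$, and at $k=l+1$ respectively $k=i-l$ each sum collapses to $\mathbf{n}^{l}\mathbf{m}^{i}\mathbf{n}^{i-l-1}$. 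Adding the two gives $2\,\mathbf{n}^{l}\mathbf{m}^{i}\mathbf{n}^{i-l-1}$, and summing over the pairs $(i,l)$ produces $2\bar{\mathbf{m}}$, as required. The delicate point to check carefully is the bookkeeping on the index ranges so that the two telescopings really start at $k=l+1$ and $k=i-l$ respectively, and that the upper boundary contribution is killed by nilpotency of $\mathbf{m}$.
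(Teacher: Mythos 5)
Your proposal is correct and follows essentially the same route as the paper: verify the three bracket relations, handling the first two by telescoping with the relations of Corollary \ref{cor:rels}, and reducing the third by linearity to the generators $\mathbf{n}^{l}\mathbf{m}^{i}\mathbf{n}^{i-l-1}$, evaluated via Lemma \ref{lem:cases} with the boundary terms killed by nilpotency. The only difference is organizational: where the paper splits into the three cases $2l<i-1$, $i-1<2l$, $2l=i-1$ before telescoping, you observe directly that the two pairs of terms occur exactly for $k>l$ and $k>i-l-1$ respectively, giving two telescoping sums starting at $k=l+1$ and $k=i-l$ — the same computation with slightly more uniform bookkeeping.
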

\begin{rem}
Notice that one might change the \(p-1\) upper bound to \(\infty\); this does not change the definition, it does make it look more universal,
but obviously it adds a small worry on first reading.
\end{rem}
\begin{proof}
We verify that 
$\overline{\mathbf{n}}$, 
$\overline{\mathbf{m}}$ and 
$\overline{\mathbf{h}}$ satisfy the following relations
\[
[\overline{\mathbf{m}},\overline{\mathbf{n}} ] =
\overline{\mathbf{h}},
\quad
[\overline{\mathbf{h}},\overline{\mathbf{n}}] =
-2\overline{\mathbf{n}},
\quad 
[\overline{\mathbf{h}},\overline{\mathbf{m}}]=2\overline{\mathbf{m}}.
\label{eqn:TripleRelaMat}
\]
By a straightforward calculation one has 
\begin{eqnarray*}
[\overline{\mathbf{m}},\overline{\mathbf{n}}] 
&= &
\left[
\sum_{i=1}^{p-1}
\sum_{l=0}^{i-1}
\mathbf{n}^{l}\mathbf{m}^{i}\mathbf{n}^{i-1-l},
\mathbf{n}
\right] \\
&=& 
\sum_{i=1}^{p-1}
\sum_{l=0}^{i-1}
\mathbf{n}^{l}\mathbf{m}^{i}\mathbf{n}^{i-1-l}
\mathbf{n}
-
\sum_{i=1}^{p-1}
\sum_{l=0}^{i-1}
\mathbf{n}
\mathbf{n}^{l}\mathbf{m}^{i}\mathbf{n}^{i-1-l}\\
&= &
\sum_{i=1}^{p-1}
\left(
\sum_{l=0}^{i-1}
\mathbf{n}^{l}\mathbf{m}^{i}\mathbf{n}^{i-l}
-
\sum_{l=0}^{i-1}
\mathbf{n}^{l+1}\mathbf{m}^{i}\mathbf{n}^{i-1-l}
\right)\\
&=& 
[\mathbf{m},\mathbf{n}]
+
\sum_{i=2}^{p-1}
\left(
\mathbf{m}^{i}\mathbf{n}^{i}
+
\sum_{l=1}^{i-1}
\mathbf{n}^{l}\mathbf{m}^{i}\mathbf{n}^{i-l}
-
\sum_{l=0}^{i-2}
\mathbf{n}^{l+1}\mathbf{m}^{i}\mathbf{n}^{i-1-l} 
-
\mathbf{n}^{i}\mathbf{m}^{i} 
\right)\\
&=& 
[\mathbf{m},\mathbf{n}]
+
\sum_{i=2}^{p-1}
\left(
\mathbf{m}^{i}\mathbf{n}^{i}
+
\sum_{l=0}^{i-2}
\mathbf{n}^{l+1}\mathbf{m}^{i}\mathbf{n}^{i-1-l}
-
\sum_{l=0}^{i-2}
\mathbf{n}^{l+1}\mathbf{m}^{i}\mathbf{n}^{i-1-l} 
-
\mathbf{n}^{i}\mathbf{m}^{i} 
\right)\\
&= &
[\mathbf{m},\mathbf{n}]
+
\sum_{i=2}^{p-1}
\left[
\mathbf{m}^{i},\mathbf{n}^{i} 
\right]\\
&= &
\overline{\mathbf{h}}.
\end{eqnarray*}
For the second commutator relation in \eqref{eqn:TripleRelaMat}, we find
\begin{eqnarray*}
[\overline{\mathbf{h}},\overline{\mathbf{n}}]
&= &
\left[
\sum_{k=1}^{p-1} 
\left[ 
\mathbf{m}^{k},\mathbf{n}^{k} 
\right],
\mathbf{n}
\right]
= 
\sum_{k=1}^{p-1} 
\left(
\mathbf{m}^{k}\mathbf{n}^{k} 
-
\mathbf{n}^{k}\mathbf{m}^{k}
\right)
\mathbf{n}
-
\sum_{k=1}^{p-1} 
\mathbf{n}
\left(
\mathbf{m}^{k}\mathbf{n}^{k} 
-
\mathbf{n}^{k}\mathbf{m}^{k}
\right)\\
&= &
\sum_{k=1}^{p-1} 
\left(
\mathbf{m}^{k}\mathbf{n}^{k+1}
-
\mathbf{n}^{k}\mathbf{m}^{k}\mathbf{n}
-
\mathbf{n}\mathbf{m}^{k}\mathbf{n}^{k}
+
\mathbf{n}^{k+1}\mathbf{m}^{k}
\right)\\
&= &
\sum_{k=1}^{p-1} 
\left(
\mathbf{m}^{k}\mathbf{n}^{k+1}
-
\mathbf{n}^{k}\mathbf{m}^{k-1}
-
\mathbf{m}^{k-1}\mathbf{n}^{k}
+
\mathbf{n}^{k+1}\mathbf{m}^{k}
\right).
\end{eqnarray*}
The equality between the second-last and last step follows by applying
Corollary \ref{cor:rels} for \(l=1\). 
Notice that in the last expression the first and the third form a telescoping series as well as the
second and fourth term. 
The terms are rearranged and summation over 
$k$ yields
\begin{eqnarray*}
\left[
\,\overline{\mathbf{h}},
\overline{\mathbf{n}}
\right] 
&= &
\sum_{k=1}^{p-1} 
\left(
\mathbf{m}^{k}\mathbf{n}^{k+1}
-
\mathbf{n}^{k}\mathbf{m}^{k-1}
-
\mathbf{m}^{k-1}\mathbf{n}^{k}
+
\mathbf{n}^{k+1}\mathbf{m}^{k}
\right)\\
&= &
\sum_{k=1}^{p-1} 
\left(
\mathbf{m}^{k}\mathbf{n}^{k+1}
-
\mathbf{m}^{k-1}\mathbf{n}^{k}
-
\mathbf{n}^{k}\mathbf{m}^{k-1}
+
\mathbf{n}^{k+1}\mathbf{m}^{k}
\right)\\
&= &
\mathbf{m}^{p-1}\mathbf{n}^{p}
-
\mathbf{n}
+
\mathbf{n}^{p}\mathbf{m}^{p-1}
-
\mathbf{n}
= 
-2\mathbf{n}
= -2 \overline{\mathbf{n}},
\end{eqnarray*}
where the nilpotency of 
$\mathbf{n}$ implies
$\mathbf{n}^{p}=0$.\\

To prove the last equation in \eqref{eqn:TripleRelaMat},  it is sufficient by linearity, to prove that
\ba
\label{eqn:BracketRelation}
\left[
\,\overline{\mathbf{h}},
\mathbf{n}^{l}\mathbf{m}^{i}\mathbf{n}^{i-l-1}
\right]
= 
2\mathbf{n}^{l}\mathbf{m}^{i}\mathbf{n}^{i-l-1}
\ea
holds for all
$0\le l \le i-1$ and 
$0\le i <p$. 
To prove that 
\eqref{eqn:BracketRelation} holds, we consider three distinct cases. 
\begin{description}
\item[Case I] 
First we assume that 
$2l < i-1$ and expand the bracket as,
\begin{eqnarray}
\label{eqn:CommutatorHM}
\left[
\,\overline{\mathbf{h}},
\mathbf{n}^{l}\mathbf{m}^{i}\mathbf{n}^{i-l-1}
\right]
&=& 
\sum_{k=1}^{p-1}
\left[
\left[ 
\mathbf{m}^{k},\mathbf{n}^{k}
\right],
\mathbf{n}^{l}\mathbf{m}^{i}\mathbf{n}^{i-l-1}
\right]\\
&=& 
\sum_{k=1}^{l} 
\left[ 
\left[ 
\mathbf{m}^{k},\mathbf{n}^{k}
\right],
\mathbf{n}^{l}\mathbf{m}^{i}\mathbf{n}^{i-l-1}
\right]
+
\sum_{k=l+1}^{i-l-1} 
\left[ 
\left[ 
\mathbf{m}^{k},\mathbf{n}^{k}
\right],
\mathbf{n}^{l}\mathbf{m}^{i}\mathbf{n}^{i-l-1}
\right]\nonumber\\
&+& 
\sum_{k=i-l}^{p-1}
\left[ 
\left[ 
\mathbf{m}^{k},\mathbf{n}^{k}
\right],
\mathbf{n}^{l}\mathbf{m}^{i}\mathbf{n}^{i-l-1}
\right].\nonumber
\end{eqnarray}

We notice that the first summation in 
\eqref{eqn:CommutatorHM} 
satisfies the condition of 
Lemma \ref{lem:cases}(a). 
The second summation satisfies the condition of 
Lemma \ref{lem:cases}(b)
and the third summation satisfies the condition of 
Lemma \ref{lem:cases}(d). 
Using Lemma \ref{lem:cases}, we compute the Lie product of 
\eqref{eqn:CommutatorHM} 
as 
\begin{eqnarray*}
\left[
\bar{\mathbf{h}},
\mathbf{n}^{l}\mathbf{m}^{i}\mathbf{n}^{i-l-1}
\right]
&= &
\sum_{k=l+1}^{i-l-1} 
\left(
\mathbf{n}^{k-1}\mathbf{m}^{k+i-l-1}\mathbf{n}^{i-l-1}
-
\mathbf{n}^{k}\mathbf{m}^{k+i-l}\mathbf{n}^{i-l-1}
\right)\\
&&+
\sum_{k=i-l}^{p-1}
\left(
\mathbf{n}^{l}\mathbf{m}^{k+l}\mathbf{n}^{k-1}
-
\mathbf{n}^{k}\mathbf{m}^{k+i-l}\mathbf{n}^{i-l-1}
\right)\\
&&+ 
\sum_{k=i-l}^{p-1}
\left(
\mathbf{n}^{k-1}\mathbf{m}^{i+k-l-1}\mathbf{n}^{i-l-1}
-
\mathbf{n}^{l}\mathbf{m}^{k+l+1}\mathbf{n}^{k}
\right)\\
&=& 
\sum_{k=l+1}^{p-1} 
\left(
\mathbf{n}^{k-1}\mathbf{m}^{k+i-l-1}\mathbf{n}^{i-l-1}
-
\mathbf{n}^{k}\mathbf{m}^{k+i-l}\mathbf{n}^{i-l-1}
\right)\\
&&+
\sum_{k=i-l}^{p-1}
\left(
\mathbf{n}^{l}\mathbf{m}^{k+l}\mathbf{n}^{k-1}
-
\mathbf{n}^{l}\mathbf{m}^{k+l+1}\mathbf{n}^{k}
\right)\\
&=&
\mathbf{n}^{l}\mathbf{m}^{i}\mathbf{n}^{i-l-1}
-
\mathbf{n}^{p-1}\mathbf{m}^{p+i-l-1}\mathbf{n}^{i-l-1}
+
\mathbf{n}^{l}\mathbf{m}^{i}\mathbf{n}^{i-l-1}
-
\mathbf{n}^{l}\mathbf{m}^{p+l}\mathbf{n}^{p-1}\\
&=& 
2\mathbf{n}^{l}\mathbf{m}^{i}\mathbf{n}^{i-l-1}.
\end{eqnarray*}
The last step uses the nilpotency of 
$\mathbf{n}$ and the fact that 
$l\ge 0$ and 
$i-l-1\ge 0$. 
\item[Case II]
Secondly we assume that 
$i-1 < 2l$ and expand the Lie product as 
\begin{eqnarray}
\label{eqn:CommutatorHM2}
[\bar{\mathbf{h}},\mathbf{n}^{l}\mathbf{m}^{i}\mathbf{n}^{i-l-1}]
&=&
\sum_{k=1}^{p-1}
\left[ 
\left[
\mathbf{m}^{k},\mathbf{n}^{k} 
\right],
\mathbf{n}^{l}\mathbf{m}^{i}\mathbf{n}^{i-l-1}
\right]\\
&=& 
\sum_{k=1}^{i-l-1} 
\left[ 
\left[
\mathbf{m}^{k},\mathbf{n}^{k} 
\right],
\mathbf{n}^{l}\mathbf{m}^{i}\mathbf{n}^{i-l-1}
\right]
+
\sum_{k=i-l}^{l} 
\left[ 
\left[
\mathbf{m}^{k},\mathbf{n}^{k} 
\right],
\mathbf{n}^{l}\mathbf{m}^{i}\mathbf{n}^{i-l-1}
\right]\nonumber\\
&+&
\sum_{k=l+1}^{p-1}
\left[ 
\left[
\mathbf{m}^{k},\mathbf{n}^{k} 
\right],
\mathbf{n}^{l}\mathbf{m}^{i}\mathbf{n}^{i-l-1}
\right]\nonumber.
\end{eqnarray}
The first summation in 
\eqref{eqn:CommutatorHM2} 
satisfies the condition of 
Lemma \ref{lem:cases}(a), the second satisfies the condition of 
Lemma \ref{lem:cases}(c), and lastly the third summation satisfies the
condition of  
Lemma \ref{lem:cases}(d). 
Therefore, 
\eqref{eqn:CommutatorHM2} 
can be computed as 
\begin{eqnarray*}
[\bar{\mathbf{h}},\mathbf{n}^{l}\mathbf{m}^{i}\mathbf{n}^{i-l-1}]
&=&
\sum_{k=i-l}^{l} 
\left( 
\mathbf{n}^{l}\mathbf{m}^{k+l}\mathbf{n}^{k-1} 
-
\mathbf{n}^{l}\mathbf{m}^{k+l+1}\mathbf{n}^{k} 
\right)\\
&&+
\sum_{k=l+1}^{p-1}
\left( 
\mathbf{n}^{l}\mathbf{m}^{k+l}\mathbf{n}^{k-1}
-
\mathbf{n}^{k}\mathbf{m}^{k+i-l}\mathbf{n}^{i-l-1}
\right)\\
&&+
\sum_{k=l+1}^{p-1}
\left( 
\mathbf{n}^{k-1}\mathbf{m}^{i+k-l-1}\mathbf{n}^{i-l-1}
-
\mathbf{n}^{l}\mathbf{m}^{k+l+1}\mathbf{n}^{k}
\right)\\
&&= 
\sum_{k=i-l}^{p-1} 
\left( 
\mathbf{n}^{l}\mathbf{m}^{k+l}\mathbf{n}^{k-1} 
-
\mathbf{n}^{l}\mathbf{m}^{k+l+1}\mathbf{n}^{k} 
\right)\\
&&+
\sum_{k=l+1}^{p-1}
\left( 
\mathbf{n}^{k-1}\mathbf{m}^{i+k-l-1}\mathbf{n}^{i-l-1}
-
\mathbf{n}^{k}\mathbf{m}^{k+i-l}\mathbf{n}^{i-l-1}
\right)\\
&=&
\mathbf{n}^{l}\mathbf{m}^{i}\mathbf{n}^{i-l-1} 
-
\mathbf{n}^{l}\mathbf{m}^{p+l}\mathbf{n}^{p-1}
+ 
\mathbf{n}^{l}\mathbf{m}^{i}\mathbf{n}^{i-l-1}
-
\mathbf{n}^{p-1}\mathbf{m}^{p+i-l-1}\mathbf{n}^{i-l-1}\\
&= &
2\mathbf{n}^{l}\mathbf{m}^{i}\mathbf{n}^{i-l-1}.
\end{eqnarray*}
Here the last step follows because of nilpotency of 
$\mathbf{n}$ and 
$\mathbf{m}$.
\item[Case III] 
The last case we treat is when 
$2l=i-1$. 
The Lie product is expanded as 
\begin{eqnarray}
\label{eqn:CommutatorHM3}
[\bar{\mathbf{h}},\bar{\mathbf{m}}]
&=&
\sum_{k=1}^{p-1}
[ 
\mathbf{n}^{k}\mathbf{m}^{k}
- 
\mathbf{m}^{k}\mathbf{n}^{k},
\mathbf{n}^{l}\mathbf{m}^{i}\mathbf{n}^{i-l-1}
]\\
&=& 
\sum_{k=1}^{l} 
[ 
\mathbf{n}^{k}\mathbf{m}^{k}
- 
\mathbf{m}^{k}\mathbf{n}^{k}, 
\mathbf{n}^{l}\mathbf{m}^{i}\mathbf{n}^{i-l-1}
]
+
\sum_{k=l+1}^{p-1} 
[ 
\mathbf{n}^{k}\mathbf{m}^{k}
- 
\mathbf{m}^{k}\mathbf{n}^{k},
\mathbf{n}^{l}\mathbf{m}^{i}\mathbf{n}^{i-l-1}
]\nonumber
\end{eqnarray}
The first summation in 
\eqref{eqn:CommutatorHM3} 
satisfies the conditions of Lemma
\ref{lem:cases}(a) and the second summation satisfies the
conditions of  Lemma
\ref{lem:cases}(d). 
Therefore 
\eqref{eqn:CommutatorHM3} 
can be computed as 
\begin{eqnarray*}
[\bar{\mathbf{h}},\bar{\mathbf{m}}]
&= &
\sum_{k=l+1}^{p-1}
\left( 
\mathbf{n}^{l}\mathbf{m}^{k+l}\mathbf{n}^{k-1}
-
\mathbf{n}^{k}\mathbf{m}^{k+i-l}\mathbf{n}^{i-l-1}
-
\mathbf{n}^{l}\mathbf{m}^{k+l+1}\mathbf{n}^{k}
+ 
\mathbf{n}^{k-1}\mathbf{m}^{i+k-l-1}\mathbf{n}^{i-l-1}
\right)\\
&=& 
\mathbf{n}^{l}\mathbf{m}^{i}\mathbf{n}^{i-l-1} 
-
\mathbf{n}^{l}\mathbf{m}^{pl}\mathbf{n}^{p-1}
-
\mathbf{n}^{p-1}\mathbf{m}^{p+i-l-1}\mathbf{n}^{i-l-1}
+ 
\mathbf{n}^{l}\mathbf{m}^{i}\mathbf{n}^{i-l-1}\\
&=& 
2\mathbf{n}^{l}\mathbf{m}^{i}\mathbf{n}^{i-l-1}.
\end{eqnarray*}
Here the last step follows because of nilpotency of 
$\mathbf{n}$ and 
$\mathbf{m}$.
\end{description}

This concludes the proof.
\end{proof}
\begin{cor}
\label{cor:SL2Mat}
Let
$\mathbf{n}\in \mathfrak{gl}_{n}(\mathbb{R})$ be a nilpotent matrix  with nilpotency index
$p\ge 2$ and let
$\mathbf{m}$ be defined as in Definition \ref{def:m}.
Then the triple
$\langle \subs{\bar{\mathbf{n}}}, \subs{\bar{\mathbf{m}}},\subs{\bar{\mathbf{h}}} \rangle$,
where
\[
\subs{\bar{\mathbf{n}}}
:=
\subs{\mathbf{n}},
\quad
\subs{\bar{\mathbf{m}}}
:=
\sum_{i=1}^{p-1}
\sum_{l=0}^{i-1}
\subs{\mathbf{n}}^{l}\subs{\mathbf{m}}^{i}\subs{\mathbf{n}^{i-l-1}},
\quad
\subs{\bar{\mathbf{h}}}
:=
\sum_{i=1}^{p-1}
\left[
\subs{\mathbf{m}^{i}},
\subs{\mathbf{n}^{i}}
\right],
\]
is an
$\mathfrak{sl}_{2}$-triple.
This follows immediately from Corollary \ref{cor:allrelsstar}.
\end{cor}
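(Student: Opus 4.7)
The plan is to lift the proof of Theorem \ref{thm:SL2Mat} to the starred setting by a direct substitution, exploiting the fact that Corollary \ref{cor:allrelsstar} gives, for the operators $\subs{\mathbf{n}}$ and $\subs{\mathbf{m}}$, identities of exactly the same algebraic shape as those that Corollary \ref{cor:allrels} provides for $\mathbf{n}$ and $\mathbf{m}$. Writing $\tilde n := \subs{\mathbf{n}}$ and $\tilde m := \subs{\mathbf{m}}$ for brevity, the four identities of Corollary \ref{cor:allrelsstar} read
\[
\tilde n^{l}\tilde m^{k}\tilde n^{k} = \tilde m^{(k\oplus l)-l}\tilde n^{k\oplus l},\quad
\tilde m^{k}\tilde n^{k}\tilde m^{l} = \tilde m^{k\oplus l}\tilde n^{(k\oplus l)-l},
\]
and the two companion relations obtained by swapping the roles of $\tilde n$ and $\tilde m$. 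These are formally identical to the statements of Corollary \ref{cor:allrels}, so the proof of Theorem \ref{thm:SL2Mat} (which nowhere uses anything about $\mathbf{n},\mathbf{m}$ beyond nilpotency and the identities of that corollary) goes through verbatim after the substitution $\mathbf{n}\mapsto\tilde n$, $\mathbf{m}\mapsto\tilde m$.

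Concretely, I would first re-derive the analogue of Lemma \ref{lem:cases} with $\mathbf{n},\mathbf{m}$ replaced by $\tilde n,\tilde m$; since that proof only manipulates products of powers of $\mathbf{n}$ and $\mathbf{m}$ using Corollary \ref{cor:allrels}, each step is replaced by the corresponding identity from Corollary \ref{cor:allrelsstar}, giving the four cases (a)--(d) with $\mathbf{n}$ and $\mathbf{m}$ replaced by $\tilde n$ and $\tilde m$. Observing that, in terms of $\tilde n$ and $\tilde m$, the elements $\subs{\bar{\mathbf n}},\,\subs{\bar{\mathbf m}},\,\subs{\bar{\mathbf h}}$ have precisely the forms
\[
\tilde n,\qquad \sum_{i=1}^{p-1}\sum_{l=0}^{i-1}\tilde n^{l}\tilde m^{i}\tilde n^{i-l-1},\qquad \sum_{i=1}^{p-1}[\tilde m^{i},\tilde n^{i}]
\]
that appeared in Theorem \ref{thm:SL2Mat}, the three commutator identities $[\subs{\bar{\mathbf m}},\subs{\bar{\mathbf n}}]=\subs{\bar{\mathbf h}}$, $[\subs{\bar{\mathbf h}},\subs{\bar{\mathbf n}}]=-2\subs{\bar{\mathbf n}}$ and $[\subs{\bar{\mathbf h}},\subs{\bar{\mathbf m}}]=2\subs{\bar{\mathbf m}}$ follow by the same three telescoping/case-split computations performed in Theorem \ref{thm:SL2Mat}.

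The only point that requires a brief verification is the termination of the telescoping sums, which in the original proof relied on $\mathbf{n}^{p}=0$. Here we need $\tilde n^{p}=0$, but this is immediate from Lemma \ref{lem:PropSubsMult}(2)--(3): $\subs{\mathbf{n}}^{p}=\subs{(\mathbf{n}^{p})}=\subs{0}=0$, and analogously $\tilde m^{p}=0$. No additional difficulty is expected; the whole content is to recognise that Corollary \ref{cor:allrelsstar} was engineered so that the algebra of $\tilde n$ and $\tilde m$ is literally a copy of the algebra of $\mathbf{n}$ and $\mathbf{m}$, so the only real risk is a bookkeeping slip when tracking the order of factors, which is controlled entirely by the antihomomorphism identity $\subs{A}\subs{B}=\subs{(BA)}$ of Lemma \ref{lem:PropSubsMult}(4).
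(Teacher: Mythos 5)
Your proposal is correct and is exactly the argument the paper intends: the proof there is the one--line remark that the result ``follows immediately from Corollary \ref{cor:allrelsstar}'', i.e.\ that $\subs{\mathbf{n}},\subs{\mathbf{m}}$ satisfy relations formally identical to those of Corollary \ref{cor:allrels}, so Lemma \ref{lem:cases} and the computations of Theorem \ref{thm:SL2Mat} carry over verbatim. Your added check that $\subs{\mathbf{n}}^{p}=\subs{(\mathbf{n}^{p})}=0$ (Lemma \ref{lem:PropSubsMult}) is a sensible explicit filling-in of a detail the paper leaves implicit.
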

\begin{remark}
Notice that \(\subs{\bar{\mathbf{m}}}\) denotes the bar of \(\subs{\mathbf{m}}\), not the star of \(\bar{\mathbf{m}}\).
\end{remark}
\begin{lem}\label{lem:proj}
\(\ker{\nM}=\ker{\bar{\nM}}\)
and
\(\ker\subs{\nM}=\ker\subs{\bar{\nM}}\).
\end{lem}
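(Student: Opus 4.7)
The plan is to establish each equality by a forward and reverse inclusion: the forward inclusion \(\ker \mathbf{m} \subseteq \ker \bar{\mathbf{m}}\) directly from the algebraic identities in Corollary \ref{cor:rels}, and the reverse via a dimension count using the \(\Sl\)-triple from Theorem \ref{thm:SL2Mat}.

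For the forward inclusion I would take any \(v \in \ker \mathbf{m}\) and show term by term that \(\bar{\mathbf{m}} v = 0\). The key trick is to rewrite each summand \(\mathbf{m}^{i}\mathbf{n}^{i-l-1}\) (with \(0 \leq l \leq i-1\)) using Corollary \ref{cor:rels}(b): with \(k = i\) and parameter \(l+1\), which lies in the valid range \([0,i]\), one gets \(\mathbf{m}^{i}\mathbf{n}^{i}\mathbf{m}^{l+1} = \mathbf{m}^{i}\mathbf{n}^{i-l-1}\). Applied to \(v\), the left-hand side vanishes since \(\mathbf{m}^{l+1}v = \mathbf{m}^{l}(\mathbf{m} v) = 0\), so \(\mathbf{m}^{i}\mathbf{n}^{i-l-1}v = 0\). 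Multiplying on the outside by \(\mathbf{n}^{l}\) preserves this, so every summand of \(\bar{\mathbf{m}}v\) is zero.

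For the reverse inclusion I would argue by dimension count. Since \(\langle \bar{\mathbf{n}}, \bar{\mathbf{h}}, \bar{\mathbf{m}} \rangle\) is an \(\Sl\)-triple by Theorem \ref{thm:SL2Mat}, decomposing \(\mathbb{R}^{n}\) into irreducibles shows that \(\dim \ker \bar{\mathbf{m}}\) (the span of the highest-weight vectors) equals the number of irreducible summands, and the same holds for \(\dim \ker \bar{\mathbf{n}}\). Because \(\bar{\mathbf{n}} = \mathbf{n}\), this gives \(\dim \ker \bar{\mathbf{m}} = \dim \ker \mathbf{n}\). By Definition \ref{def:m}, \(\mathbf{m}\) is similar to \(\nM = \nN^{t}\), so \(\mathrm{rank}(\mathbf{m}) = \mathrm{rank}(\mathbf{n})\) and hence \(\dim \ker \mathbf{m} = \dim \ker \mathbf{n}\). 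Combining, \(\dim \ker \mathbf{m} = \dim \ker \bar{\mathbf{m}}\), and together with the already established inclusion this forces equality.

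The argument for the starred statement runs in exact parallel. The identity \(\subs{\mathbf{m}^{i}}\subs{\mathbf{n}^{i}}\subs{\mathbf{m}^{l+1}} = \subs{\mathbf{m}^{i}}\subs{\mathbf{n}^{i-l-1}}\) supplied by Corollary \ref{cor:allrelsstar} (valid since \(l+1 \leq i\)), together with \(\subs{\mathbf{m}^{l+1}}\varphi = (\subs{\mathbf{m}})^{l}\subs{\mathbf{m}}\varphi = 0\) for \(\varphi \in \ker \subs{\mathbf{m}}\), gives \(\ker \subs{\mathbf{m}} \subseteq \ker \subs{\bar{\mathbf{m}}}\) by the same term-by-term computation. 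The \(\Sl\)-triple of Corollary \ref{cor:SL2Mat} then yields \(\dim \ker \subs{\bar{\mathbf{m}}} = \dim \ker \subs{\mathbf{n}}\). The only extra observation needed is that \(\dim \ker \subs{\mathbf{m}} = \dim \ker \subs{\mathbf{n}}\); this holds because \(\ker \subs{A}\) consists of those vector polynomials vanishing on \(\mathrm{im}(A)\), a subspace whose dimension depends only on \(\mathrm{rank}(A)\), and \(\mathrm{rank}(\mathbf{m}) = \mathrm{rank}(\mathbf{n})\) from the previous paragraph. No real obstacle is expected: the only place requiring care is the bookkeeping that \(l+1\) stays within the allowed range of the identity in Corollary \ref{cor:rels}(b) (respectively Corollary \ref{cor:allrelsstar}); the rest is a routine application of the already-proved \(\Sl\)-representation theory.
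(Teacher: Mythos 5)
Your proof is correct, but it takes a genuinely different route from the paper for the harder half. Your forward inclusion is essentially the paper's opening observation (the paper writes \(\nN^{l}\nM^{i}\nN^{i-l-1}=\nN^{l}\nM^{i}\nN^{i-l}\nM\), exhibiting \(\nM\) as a right factor, which is the same use of Corollary \ref{cor:rels}(b) that you make term by term). For the reverse inclusion the paper does not count dimensions at all: it pushes the factorization further and writes \(\bar{\nM}=\bigl(1+\sum_{i,l}P_i^l\bigr)\nM=\mathcal{E}_p\nM\), where the \(P_i^l=\nN^{l}\nM^{i}\nN^{i-l}\) are the projections of Lemma \ref{lem:kermm}, and then observes that \(\mathcal{E}_p\) is an invertible diagonal matrix, so both kernels coincide immediately; the starred statement is handled by choosing a monomial basis in which \(\subs{\nN}\) is itself in Jordan form with \(\subs{\nM}\) its transpose, reducing to the matrix case. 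You instead invoke the \(\Sl\)-triples of Theorem \ref{thm:SL2Mat} and Corollary \ref{cor:SL2Mat}: complete reducibility gives \(\dim\ker\bar{\mathbf{m}}=\dim\ker\bar{\mathbf{n}}=\dim\ker\mathbf{n}\), and the rank equalities (\(\mathrm{rank}\,\mathbf{m}=\mathrm{rank}\,\mathbf{n}\), respectively the fact that \(\ker\subs{A}\) is the space of polynomials vanishing on \(\mathrm{im}\,A\), whose graded dimension depends only on \(\mathrm{rank}\,A\)) close the count. Both arguments are sound; the paper's buys the explicit invertible factor \(\mathcal{E}_p\), which it reuses later (Remark \ref{rem:Epp}), while yours is softer and avoids Lemma \ref{lem:kermm} and the basis construction in the starred case. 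One small point you should make explicit: \(\mathcal{P}^n\) is infinite dimensional, so your dimension count for the starred statement must be carried out on each homogeneous component \(\mathcal{P}^n_k\) separately — this is legitimate because all the operators involved preserve degree, so the \(\Sl\)-triple of Corollary \ref{cor:SL2Mat} restricts to each finite-dimensional graded piece, but as written the comparison of dimensions of infinite-dimensional kernels is not meaningful without that remark.
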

\begin{proof}
First we notice that \(\nN^{l}\nM^{i}\nN^{i-l-1} = \nN^{l}\nM^{i}\nN^{i-l}\nM\), using the relations. 
Then
\bas
\bar{\nM}&=&\nM+\sum_{i=2}^{p-1} \sum_{l=0}^{i-1}  \nN^{l}\nM^{i}\nN^{i-l-1}
\\&=&\nM+\sum_{i=2}^{p-1} \sum_{l=0}^{i-1}  \nN^{l}\nM^{i}\nN^{i-l}\nM
\\&=&(1+\sum_{i=2}^{p-1} \sum_{l=0}^{i-1}  P_i^l)\nM
\\&=&\mathcal{E}_p \nM,
\eas
where the \(P_i^l\) are as in Lemma \ref{lem:kermm}.
The matrix \(\mathcal{E}_p\) is diagonal, with strictly positive integer entries on the diagonal (since the \(P_i^l\), being projection operators, only contribute zeros or ones on the diagonal), so it is invertible.
It follows that \(\ker \bar{\nM} = \ker \nM\).
To lift this proof to the starred case, we construct a monomial basis \(\langle e_i\rangle_i\) of the space of polynomials in such a way
that \(\subs{\nN}e_i\) is either \(e_{i+1}\) or zero,
and \(\subs{\nN}\) is in Jordan normal form and \(\subs{\nM}\) is its transpose.
\end{proof}
\begin{cor}
\label{cor:proj}
\(\ker{\mathbf{m}}=\ker{\bar{\mathbf{m}}}\)
and
\(\ker\subs{\mathbf{m}}=\ker\subs{\bar{\mathbf{m}}}\).
\end{cor}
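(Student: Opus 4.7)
The plan is to deduce Corollary \ref{cor:proj} from Lemma \ref{lem:proj} by a straightforward conjugation argument, since the only difference between the matrix setting of Lemma \ref{lem:proj} (involving $\nN$ and $\nM$ in Jordan normal form) and the setting of the corollary (involving a general nilpotent $\mathbf{n}$ and its conjugate transpose $\mathbf{m}$) is the change of basis $P$ from Definition \ref{def:m}.

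First I would observe that, by Definition \ref{def:m}, $\mathbf{n}=P^{-1}\nN P$ and $\mathbf{m}=P^{-1}\nM P$ for some $P\in GL_n(\mathbb{R})$. Since the defining formula
\[
\bar{\mathbf{m}}=\sum_{i=1}^{p-1}\sum_{l=0}^{i-1}\mathbf{n}^{l}\mathbf{m}^{i}\mathbf{n}^{i-l-1}
\]
is a polynomial expression in $\mathbf{n}$ and $\mathbf{m}$, conjugation by $P$ is compatible with it, and hence
\[
\bar{\mathbf{m}}=P^{-1}\bar{\nM}P.
\]
Consequently $\ker\bar{\mathbf{m}}=P^{-1}(\ker\bar{\nM})$ and $\ker\mathbf{m}=P^{-1}(\ker\nM)$. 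The first equality of the corollary then reduces directly to the identity $\ker\bar{\nM}=\ker\nM$, which is exactly Lemma \ref{lem:proj}.

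For the starred version I would use the construction already indicated at the end of the proof of Lemma \ref{lem:proj}: on each homogeneous component $\mathcal{P}^n_k$ one can choose a monomial basis in which $\subs{\mathbf{n}}$ is in Jordan normal form and $\subs{\mathbf{m}}$ is its transpose. In this basis the whole argument of Lemma \ref{lem:proj} applies verbatim to $\subs{\mathbf{n}}$ and $\subs{\mathbf{m}}$, so $\ker\subs{\mathbf{m}}=\ker\subs{\bar{\mathbf{m}}}$ follows. Alternatively, one may appeal to Corollary \ref{cor:allrelsstar} to see that $\subs{\bar{\mathbf{m}}}$ is obtained from $\subs{\mathbf{m}}$ by the same formal polynomial combination as in the unstarred case, so that the diagonal projection argument expressing $\bar{\nM}=\mathcal{E}_p\,\nM$ lifts to give $\subs{\bar{\mathbf{m}}}=\mathcal{E}\,\subs{\mathbf{m}}$ for an invertible diagonal $\mathcal{E}$ in the chosen monomial basis, which at once yields equality of the kernels.

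The argument is essentially routine; the only point that requires a little care is checking that the formal manipulations used to obtain $\bar{\nM}=\mathcal{E}_p\nM$ in the proof of Lemma \ref{lem:proj} only use the relations from Corollary \ref{cor:rels} (which are preserved under conjugation and under the antihomomorphism $A\mapsto\subs{A}$, as recorded in Corollary \ref{cor:allrelsstar}). Once this is noted, both statements follow without any further computation.
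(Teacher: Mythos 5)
Your argument is correct and matches the paper's own proof in substance: the paper also reduces the corollary to Lemma \ref{lem:proj} via the change of basis $P$, phrasing it as the observation that $\bar{\mathbf{m}}=(P^{-1}\mathcal{E}_p P)\,\mathbf{m}$ with the $P$-conjugate of $\mathcal{E}_p$ invertible, which is the same conjugation argument you give (and likewise, via $\subs{P}$, for the starred operators). The only cosmetic caveat is that for a general nilpotent $\mathbf{n}$ the diagonalizing basis for $\subs{\mathbf{n}}$ consists of monomials in the $P$-transformed coordinates rather than the original ones, but this does not affect the argument.
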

\begin{proof}
The \(P\)-conjugate of \(\mathcal{E}_p\) is also invertible.
\end{proof}
\begin{rem}\label{rem:Epp}
The diagonal matrix \(\mathcal{E}_p\) has, in the irreducible case, is given by
\[(1,1\cdot (p-1), 2\cdot (p-2) ,\cdots, (p-1)\cdot 1),\]
 as one would expect from \(\Sl\)-representation theory and as has been shown in Lemma \ref{lem:kermm}.
In the reducible case it consists of blocks of this type.
\end{rem}
\begin{rem}\label{rem:Eppp}
While working on Lemma \ref{lem:kermm}, we found the following interesting identity:
\bas
\nM&=&\bar{\nM}
+\sum_{i=1}^{p-1}(-1)^{i+1}
\sum_{l=1}^{i}
\frac{\binom{i-1}{l-1}\binom{i}{l-1}}{ i! i! l}
 \bar{\nN}^{l-1} 
\bar{\nM}^i 
\bar{\nN}^{i-l}
\\&=&
\bar{\nM}
+\sum_{i=1}^{p-1}(-1)^{i+1}
\sum_{l=1}^{i}
\frac{1}{il}
\binom{i}{l-1}
 \frac{\bar{\nN}^{l-1} }{(l-1)!}
\frac{\bar{\nM}^i }{i!}
\frac{\bar{\nN}^{i-l}}{(i-l)!},
\eas
a formula that we did not prove, but checked in the irreducible case (and some reducible cases) for \(p< 10\).
Since \(i>i-l-1\), \(\bar{\nM}\) can be moved to the right using the \(\Sl\) bracket relations, showing that  \(\ker\bar{\nM}\subset\ker\nM\).
\end{rem}
\begin{thm}
Let \(\cann{\bar{\mathbf{h}}}=\mult{\bar{\mathbf{h}}}+\subs{\bar{\mathbf{h}}}\). Then \(\langle \conn{\bar{\mathbf{m}}},\cann{\bar{\mathbf{h}}},\conn{\bar{\mathbf{n}}}\rangle\)
is an
$\mathfrak{sl}_{2}$-triple.
\end{thm}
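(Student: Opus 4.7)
The approach is a direct bilinear verification of the three \(\Sl\)-bracket relations
\[[\conn{\bar{\mathbf{m}}},\conn{\bar{\mathbf{n}}}]=\cann{\bar{\mathbf{h}}},\qquad [\cann{\bar{\mathbf{h}}},\conn{\bar{\mathbf{n}}}]=-2\conn{\bar{\mathbf{n}}},\qquad [\cann{\bar{\mathbf{h}}},\conn{\bar{\mathbf{m}}}]=2\conn{\bar{\mathbf{m}}},\]
using nothing beyond the identities \(\conn{A}=\mult{A}-\subs{A}\) and \(\cann{A}=\mult{A}+\subs{A}\) together with facts already in place. Three ingredients do all the work: (i) item~1 of Lemma~\ref{lem:PropSubsMult}, which gives \([\mult{A},\subs{B}]=0\) for all \(A,B\); (ii) the fact that \(A\mapsto\mult{A}\) is a Lie algebra homomorphism, so \([\mult{A},\mult{B}]=\mult{[A,B]}\); and (iii) the two parallel \(\Sl\)-triple results already proved, namely Theorem~\ref{thm:SL2Mat} for \(\langle\bar{\mathbf{n}},\bar{\mathbf{h}},\bar{\mathbf{m}}\rangle\) and Corollary~\ref{cor:SL2Mat} for \(\langle\subs{\bar{\mathbf{n}}},\subs{\bar{\mathbf{m}}},\subs{\bar{\mathbf{h}}}\rangle\). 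The important point about (iii) is that both triples satisfy the \emph{same} sign conventions: in particular \([\subs{\bar{\mathbf{m}}},\subs{\bar{\mathbf{n}}}]=\subs{\bar{\mathbf{h}}}\), \([\subs{\bar{\mathbf{h}}},\subs{\bar{\mathbf{n}}}]=-2\subs{\bar{\mathbf{n}}}\) and \([\subs{\bar{\mathbf{h}}},\subs{\bar{\mathbf{m}}}]=2\subs{\bar{\mathbf{m}}}\), despite the fact that \(A\mapsto\subs{A}\) is an antihomomorphism; this sign flip has been absorbed into the definition of \(\subs{\bar{\mathbf{m}}}\) in the corollary.

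By (i), every mixed bracket \([\mult{X},\subs{Y}]\) vanishes, so each of the three outer commutators splits cleanly into a pure \(\mult\) piece and a pure \(\subs\) piece. For the first relation this gives
\[[\conn{\bar{\mathbf{m}}},\conn{\bar{\mathbf{n}}}]=[\mult{\bar{\mathbf{m}}},\mult{\bar{\mathbf{n}}}]+[\subs{\bar{\mathbf{m}}},\subs{\bar{\mathbf{n}}}]=\mult{\bar{\mathbf{h}}}+\subs{\bar{\mathbf{h}}}=\cann{\bar{\mathbf{h}}},\]
where the two plus signs come from \((-)(-)=+\) in the bilinear expansion. For the second relation the opposite signs of \(\subs\) in \(\cann{}\) versus \(\conn{}\) produce a minus between the two pure pieces, yielding \([\cann{\bar{\mathbf{h}}},\conn{\bar{\mathbf{n}}}]=[\mult{\bar{\mathbf{h}}},\mult{\bar{\mathbf{n}}}]-[\subs{\bar{\mathbf{h}}},\subs{\bar{\mathbf{n}}}]=-2\mult{\bar{\mathbf{n}}}+2\subs{\bar{\mathbf{n}}}=-2\conn{\bar{\mathbf{n}}}\), and the third relation is entirely analogous.

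There is no serious technical obstacle; given the two previous triple results, the entire argument reduces to tracking signs in a short bilinear expansion. The one conceptual point worth underlining is \emph{why} the \(H\) of this new triple must be \(\cann{\bar{\mathbf{h}}}=\mult{\bar{\mathbf{h}}}+\subs{\bar{\mathbf{h}}}\) rather than \(\conn{\bar{\mathbf{h}}}\): the first relation forces the sign, since once the cross terms cancel both the \(\mult\)-part and the \(\subs\)-part contribute to \([\conn{\bar{\mathbf{m}}},\conn{\bar{\mathbf{n}}}]\) with the same sign. The remaining two relations then automatically produce the factors \(-2\conn{\bar{\mathbf{n}}}\) and \(2\conn{\bar{\mathbf{m}}}\) precisely because \(\cann{}\) and \(\conn{}\) differ only in the sign of \(\subs\).
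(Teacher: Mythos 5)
Your proposal is correct and follows essentially the same route as the paper's own proof: expand each bracket bilinearly, cancel the mixed terms via \([\mult{A},\subs{B}]=0\) (Lemma \ref{lem:PropSubsMult}), and then invoke the homomorphism property of \(A\mapsto\mult{A}\) together with the already established triples of Theorem \ref{thm:SL2Mat} and Corollary \ref{cor:SL2Mat}. Your remark that the antihomomorphism's sign reversal is harmless because \(\subs{\bar{\mathbf{m}}}\), \(\subs{\bar{\mathbf{h}}}\) are defined as the bar of the starred operators (so the starred triple obeys the same sign conventions) is exactly the point the paper relies on.
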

\begin{rem}
This has as a consequence that \(\im\conn{\mathbf{n}}\) has  \(\ker\conn{\bar{\mathbf{m}}}\) as a natural complement.
\end{rem}
\begin{proof}
We compute
\bas
\left[ \conn{\bar{\mathbf{m}}},\conn{\bar{\mathbf{n}}}\right]&=& \conn{\bar{\mathbf{m}}}\conn{\bar{\mathbf{n}}}-\conn{\bar{\mathbf{n}}}\conn{\bar{\mathbf{m}}}
\\&=& (\mult{\bar{\mathbf{m}}}-\subs{\bar{\mathbf{m}}})(\mult{\bar{\mathbf{n}}}-\subs{\bar{\mathbf{n}}})-(\mult{\bar{\mathbf{n}}}-\subs{\bar{\mathbf{n}}})(\mult{\bar{\mathbf{m}}}-\subs{\bar{\mathbf{m}}})
\\&=&\mult{\bar{\mathbf{m}}} \mult{\bar{\mathbf{n}}} -\subs{\bar{\mathbf{m}}}\mult{\bar{\mathbf{n}}} -\mult{\bar{\mathbf{m}}}\subs{\bar{\mathbf{n}}}+\subs{\bar{\mathbf{m}}}\subs{\bar{\mathbf{n}}}
\\&+&\mult{\bar{\mathbf{n}}}\subs{\bar{\mathbf{m}}}-\mult{\bar{\mathbf{n}}}\mult{\bar{\mathbf{m}}}+\subs{\bar{\mathbf{n}}}\mult{\bar{\mathbf{m}}}-\subs{\bar{\mathbf{n}}}\subs{\bar{\mathbf{m}}}
\\&=&\left[\mult{\bar{\mathbf{m}}} , \mult{\bar{\mathbf{n}}}\right]+\left[\subs{\bar{\mathbf{m}}},\subs{\bar{\mathbf{n}}}\right]
\\&=&\mult{\bar{\mathbf{h}}}+\subs{\bar{\mathbf{h}}} =\cann{\bar{\mathbf{h}}},
\eas
\bas
\left[ \cann{\bar{\mathbf{h}}},\conn{\bar{\mathbf{m}}}\right]&=&
\cann{\bar{\mathbf{h}}}\conn{\bar{\mathbf{m}}}-\conn{\bar{\mathbf{m}}}\cann{\bar{\mathbf{h}}}
\\&=&(\mult{\bar{\mathbf{h}}}+\subs{\bar{\mathbf{h}}})(\mult{\bar{\mathbf{m}}}-\subs{\bar{\mathbf{m}}})-(\mult{\bar{\mathbf{m}}}-\subs{\bar{\mathbf{m}}})(\mult{\bar{\mathbf{h}}}+\subs{\bar{\mathbf{h}}})
\\&=&\mult{\bar{\mathbf{h}}} \mult{\bar{\mathbf{m}}} -\mult{\bar{\mathbf{h}}}\subs{\bar{\mathbf{m}}}+\subs{\bar{\mathbf{h}}}\mult{\bar{\mathbf{m}}}-\subs{\bar{\mathbf{h}}}\subs{\bar{\mathbf{m}}}
\\&&-\mult{\bar{\mathbf{m}}}\mult{\bar{\mathbf{h}}}-\mult{\bar{\mathbf{m}}}\subs{\bar{\mathbf{h}}}+\subs{\bar{\mathbf{m}}}\mult{\bar{\mathbf{h}}}+\subs{\bar{\mathbf{m}}}\subs{\bar{\mathbf{h}}}
\\&=&\left[\mult{\bar{\mathbf{h}}},\mult{\bar{\mathbf{m}}}\right]-\left[\subs{\bar{\mathbf{h}}},\subs{\bar{\mathbf{m}}}\right]=2\mult{\bar{\mathbf{m}}}-2\subs{\bar{\mathbf{m}}}=2\conn{\bar{\mathbf{m}}},
\eas
\bas
\left[ \cann{\bar{\mathbf{h}}},\conn{\bar{\mathbf{n}}}\right]&=&
\cann{\bar{\mathbf{h}}}\conn{\bar{\mathbf{n}}}-\conn{\bar{\mathbf{n}}}\cann{\bar{\mathbf{h}}}
\\&=&(\mult{\bar{\mathbf{h}}}+\subs{\bar{\mathbf{h}}})(\mult{\bar{\mathbf{n}}}-\subs{\bar{\mathbf{n}}})-(\mult{\bar{\mathbf{n}}}-\subs{\bar{\mathbf{n}}})(\mult{\bar{\mathbf{h}}}+\subs{\bar{\mathbf{h}}})
\\&=&\mult{\bar{\mathbf{h}}} \mult{\bar{\mathbf{n}}} -\mult{\bar{\mathbf{h}}}\subs{\bar{\mathbf{n}}}+\subs{\bar{\mathbf{h}}}\mult{\bar{\mathbf{n}}}-\subs{\bar{\mathbf{h}}}\subs{\bar{\mathbf{n}}}
\\&&-\mult{\bar{\mathbf{n}}}\mult{\bar{\mathbf{h}}}-\mult{\bar{\mathbf{n}}}\subs{\bar{\mathbf{h}}}+\subs{\bar{\mathbf{n}}}\mult{\bar{\mathbf{h}}}+\subs{\bar{\mathbf{n}}}\subs{\bar{\mathbf{h}}}
\\&=&\left[\mult{\bar{\mathbf{h}}},\mult{\bar{\mathbf{n}}}\right]-\left[\subs{\bar{\mathbf{h}}},\subs{\bar{\mathbf{n}}}\right]=-2\mult{\bar{\mathbf{n}}}+2\subs{\bar{\mathbf{n}}}=-2\conn{\bar{\mathbf{n}}}.
\eas
This proves the main result for this paper.
\end{proof}
The remaining sections will apply this result to a number of cases.
\section{The normal form in the irreducible nilpotent case}\label{sec:NFirreducible}
Assume \(\nN\) is an irreducible nilpotent matrix in Jordan normal form in \(\mathbb{R}^n\) and let \(\nM\) be its transpose.
Then \(\subs{\nN}\) shifts the indices in a monomial up by one, and
 \(\subs{\nM}\) shifts the indices in a monomial down by one.
We write \(|k,l|\) (with \(k\le l\)) for an element \(x_1^{i_1} x_2^{i_2} \cdots x_n^{i_n}\) with \(i_k,i_l>0\) and \(i_1,\ldots,i_{k-1},i_{l+1},\ldots,i_n\) are zero.
That is, we mention only the lowest and the highest index occuring in the monomial.
In other words, it is a monomial of the form \(x_kx_l F(x_k,\ldots,x_l)\).
\begin{rem}
From the eigenvalue point of view it would be more natural to use \(y_{n-1},\cdots,y_0\) instead of \(x_1,\cdots,x_n\) as coordinates.
\end{rem}
Then we compute as follows:
\(\subs{\nN}|k,l|=|\nN(k,l)|=|k+1,l+1|\) if \(l<n\) and  \(0\) if \(l=n\).
The kernel of \(\subs{\nM} \) (and of \(\subs{\bar{\nM}} \)) is spanned by \(|1,k|\), \(k=1,\ldots,n\).
\bas
\ker\subs{\nM}&=&
\bigoplus_{l=0}^{n-2}u^{l} x_1 x_{n-l} \mathbb{R}[u^{n-1} x_1,\cdots,u^{l} x_{n-l}]
\oplus u^{n-1} x_1 \mathbb{R}[u^{n-1} x_1].
\eas
The \(\subs{\bar{\nH}}\)-eigenvalue of \(|1,k|\) is \(n-k\).
Thus we have a decomposition \[\mathcal{P}^n=(V_{n-1} \oplus V_{n-2} \oplus\ldots\oplus V_0)\otimes V_{n-1},\]
generated by \(|1,1||n\rangle,|1,2||n\rangle,\cdots,|1,n||n\rangle\).
Since \(V_{k-1}\otimes V_{n-1}=\bigoplus_{l=n-k}^{n +k-2}V_l\) by Clebsch-Gordan, we have
\[
\mathcal{P}^n=\bigoplus_{k=1}^n V_{k-1}\otimes V_{n-1}= \bigoplus_{k=1}^n \bigoplus_{l=n-k}^{n +k-2}V_l,
\]
where the elements in \(V_l\) have \(\cann{\nH}\)-eigenvalue \(l\).
 \begin{defn}
We define the \(p\)th {\em transvectant} of \(\nw_{k-1} \otimes \nv_{n-1}\), where  \(\nw_{k-1}\) is a top weight vector with eigenvalue \(k-1\) and \(\nv_{n-1}\) a top weight vector with eigenvalue \(n-1\),
(think of \(|1,n-k+1|\) and \(|n\rangle\), respectively), \[\Join_{k+n-2p-2}: W_{k-1}\otimes V_{n-1} \rightarrow U_{k+n-2p-2}\cap \ker \conn{\bar{\nM}},\] as
\bas
\Join_{k+n-2p-2}\nw_{k-1}\otimes\nv_{n-1} =
\sum_{i+j=p}  \binom{p}{i}  \frac{\nw^{(j)}_{k-1}}{\binom{k-1}{j}}\otimes \frac{\nv^{(i)}_{n-1}}{\binom{n-1}{i}},\quad k=1,\ldots,n, p=0,\ldots,\min(k-1,n-1),
\eas
where \(\nw^{(j)}_{k-1}=\frac{1}{j!}\subs{\nN}^j \nw_{k-1}\) and \(\nv^{(i)}_{n-1}=\frac{1}{i!}\mult{\nN}^i \nv_{n-1}\), \(0\le i\leq n-1\) and \(0\le j\le k-1\).
\end{defn}
\begin{rem}
The transvectant was one of the main tools of Classical Invariant Theory to compute covariants, that is, irreducible \(\Sl\)-representations.
In the modern theory it is replaced by the much more general Young-Weyl tableaux to cover more general Lie algebras.
But in normal form theory there is no need for such generality (yet), and we can enjoy working with explicit formulas to implement the Clebsch-Gordan decomposition.
\end{rem}
\begin{rem}
In the usual transvectant definition there is always a factor \((-1)^j\); here we do not have this sign appearing because it is present in the definition of \(\conn{\nN}\).
\end{rem}
In our example 
\[\nw^{(j)}_{k-1}=\frac{1}{j!}\subs{\nN}^j |1,n-k+1|=\frac{1}{j!}|j+1,n-k+j+1|\]
and 
\[\nv^{(i)}_{n-1}=\frac{1}{i!}\mult{\nN}^i |n\rangle=\frac{1}{i!}|n-i\rangle.\] 
The transvectant now reads
\ba\label{eq:transvectant}
\Join_{k+n-2p-2}|1,n-k+1||n\rangle =
\sum_{i+j=p} \frac{1}{i!j!}\frac{ \binom{p}{i} }{\binom{n-1}{i} \binom{k-1}{j}} |j+1,n-k+j+1||n-i\rangle.
\ea
Here we can replace \(|i+1,n-k+i+1|\) by \(x_{i+1}x_{n-k+i+1}F_{n-k+1}^p(x_{i+1},\cdots,x_{n-k+i+1})\) 
where \(F_{n-k+1}^{p}\) is an arbitrary polynomial and
\(|n-i\rangle\) by \(e_{n-i}\) to obtain the corresponding term in the general normal form formula.

\begin{defn}
Define the Clebsch-Gordan Coefficient, also known as \(3j\)-symbol , by (cf. \cite{mokhtari2019equivariant})
\[\begin{pmatrix} m& n& m+n-2p\\i&j& k\end{pmatrix}=\sum_{r+q=k} (-1)^{i-k+r} \frac{\binom{p}{i-q}\binom{i}{q} \binom{j}{r}}{\binom{m}{i-q}\binom{n}{j-r}}, \quad i+j=k+p,\]
with \(j_1=m, j_2=n\) and \(j_3=n+m-2p\) as the three \(j\)'s. 
Here all the \(n\) and \(k\) are arbitrary and not determined by their previous meaning.
\end{defn}
\begin{rem}
	We consider the special case \(k=0\):
\[\begin{pmatrix} m& n& m+n-2p\\i&j& 0\end{pmatrix}= \frac{\binom{p}{i} }{\binom{m}{i}\binom{n}{j}}, \quad i+j=p.\]	
\end{rem}
Now recall the inversion formula for the Clebsch-Gordan coefficients \cite{mokhtari2019equivariant}:
\bas
\nw^{(j)}_n\otimes \nv^{(i)}_m&=&\sum_{p+k=i+j}\begin{pmatrix} m&n&m+n-2p\\i&j&k\end{pmatrix}\frac{ \binom{m}{i}\binom{n}{j}\binom{m}{p}\binom{n}{p}}{\binom{m+n-2p}{k}\binom{m+n-p+1}{p}}
\Join_{m+n-2p}^{(k)}\nw_n\otimes \nv_m,
\eas
with the proviso that \(p\le \min(m,n)\).
This allows us to project an arbitrary expression onto \(\ker \conn{\bar{\nM}}\) by taking \(k=0\):
\bas
\pi_{\ker \conn{\bar{\nM}}}\nw^{(j)}_n\otimes\nv^{(i)}_m&=&\sum_{p=i+j}\begin{pmatrix} m&n&m+n-2p\\i&j&0\end{pmatrix}\frac{ \binom{m}{i}\binom{n}{j}\binom{m}{p}\binom{n}{p}}{\binom{m+n-p+1}{p}}
\Join_{m+n-2p}\nw_n\otimes \nv_m
\\
&=&\sum_{p=i+j}\frac{ \binom{p}{i}\binom{m}{p}\binom{n}{p}}{\binom{m+n-p+1}{p}}
\Join_{m+n-2p}\nw_n\otimes\nv_m
\\
&=&\frac{ \binom{i+j}{i}\binom{m}{i+j}\binom{n}{i+j}}{\binom{m+n-(i+j)+1}{i+j}}
\Join_{m+n-2(i+j)}\nw_n\otimes\nv_m.
\eas
\begin{rem}
	Notice that the projection on \(\ker \conn{\bar{\nM}}\) is done without using \(\bar{\nM}\): only \(\nN\) is used.
	The Clebsch-Gordan inversion formula not only allows us to project on \(\ker\conn{\bar{\nM}}\),
	but also to find the preimage of \(\ker\conn{\bar{\nN}}\) by lowering \(k\) to \(k-1\) and dividing by \(k\).
	Again,  only \(\nN\) is used.
\end{rem}
In our example \[\nv^{(i)}_{n-1}=\frac{1}{i!}\mult{\nN}^i |n\rangle=\frac{1}{i!}|n-i\rangle\] and \[\nw^{(j)}_{k-1}=\frac{1}{j!}\subs{\nN}^j |1,n-k+1|=\frac{1}{j!}|j+1,n-k+j+1|.\]
It follows that
\bas
\pi_{\ker \conn{\bar{\nM}}}|j+1,n-k+j+1||n-i\rangle&=&
\frac{ (i+j)!\binom{{n-1}}{i+j}\binom{{k-1}}{i+j}}{\binom{{n}+{k}-(i+j)-1}{i+j}}
\Join_{n+k-2-2(i+j)} |1,n-k+1||n\rangle.
\eas
or
\ba
\label{eq:projection}
\pi_{\ker \conn{\bar{\nM}}} |I,J| |K\rangle&=&\frac{(n-1)!}{(K-I)!}
\frac{\binom{n-J+I-1}{n-K+I-1}}{\binom{n-J+K}{n-K+I-1}}
\Join_{2K-J-I} |1,J-I+1||n\rangle.
\ea
The transvectant now reads
\[
\Join_{k+n-2p-2}|1,n-k+1||n\rangle =
\sum_{i+j=p} \frac{1}{i!j!}\frac{ \binom{p}{i} }{\binom{n-1}{i} \binom{k-1}{j}} |j+1,n-k+j+1||n-i\rangle.
\]
This allows us to compute the general form of the normal form (the description problem) and to do the actual computations for any dimension, only using \(\nN\)
and the knowledge of \(\ker{\nM} \) and \(\ker\subs{{\nM}}\) (thanks to Lemma \ref{lem:kermm}).
\subsection{The normal form of the two dimensional nilpotent map}
The transvectant in \eqref{eq:transvectant} now reads, with \(n=2\),
\[
\Join_{k-2p}|1,3-k||2\rangle =
\sum_{i+j=p} \frac{1}{i!j!}\frac{ \binom{p}{i} }{\binom{1}{i} \binom{k-1}{j}} |j+1,3-k+j||2-i\rangle,\quad k=1,2, p=0,k-1.
\]
Here we can replace \(|j+1,3-k+j|\) by \(x_{j+1}x_{3-k+j}F_{3-k}^p(x_{j+1},\cdots,x_{3-k+j})\) 
where \(F_{3-k}^{p}\) is an arbitrary polynomial and
\(|2-i\rangle\) by \(e_{2-i}\) to obtain the corresponding term in the general normal form formula.

It follows from the Clebsch-Gordan formula that \(0\le p\le k-1\).
\newline
For \(k=1\) and \(p=0\) we have
\[
\Join_{1}|1,2||2\rangle =
|1,2||2\rangle,
\]
corresponding to
\[
\begin{pmatrix} 0\\ x_1 x_2 F_{2}^0 (x_1,x_2)\end{pmatrix}.
\]
For \(k=2\) and \(p=0\) we have
\[
\Join_{2}|1,1||2\rangle =
|1,1||2\rangle,
\]
corresponding to
\[
\begin{pmatrix} 0\\ x_1 F_{1}^0 (x_1)\end{pmatrix}.
\]
For \(k=2\) and \(p=1\) we have
\[
\Join_{0}|1,1||2\rangle =
|j+1,1+j||2-i\rangle=|2,2||2\rangle
+ |1,1||1\rangle,
\]
corresponding to
\[
\begin{pmatrix} x_1 F_1^1(x_1) \\ x_2 F_{1}^1 (x_2)\end{pmatrix}.
\]
The general normal form map in \(\ker \conn{\bar{\nM}}\)-style is now
\[
F(x)
=
\begin{pmatrix}
0 & 1 \\
0 & 0
\end{pmatrix}
\begin{pmatrix}
x_{1}\\
x_{2}
\end{pmatrix}
+
\begin{pmatrix} 0\\ x_1 x_2 F_{2}^0 (x_1,x_2)\end{pmatrix}
+
\begin{pmatrix} 0\\ x_1 F_{1}^0 (x_1)\end{pmatrix}
+
\begin{pmatrix} x_1 F_1^1(x_1) \\ x_2 F_{1}^1 (x_2)\end{pmatrix}.
\]
Here we can combine the \( F_{2}^0\) and \(F_{1}^0 \) to
\[
\begin{pmatrix} 0\\ x_1 p (x_1,x_2)\end{pmatrix}
\]
to get a shorter expression (but which is less natural with respect to \(\Sl\)).
Notice that the versal deformation of the linear part is given by
\bas
\begin{pmatrix}
\alpha_1^1 & 1 \\
\alpha_1^0 & \alpha_1^1
\end{pmatrix},
\eas
where the indices of \(\alpha\) correspond to those of \(F\). This is the same as in the vector field case since the map is linear.
\begin{rem}
In \cite[Example 2.4]{MR1735239}, the authors compute the normal form of this
map, without the use of
$\mathfrak{sl}_{2}$ theory: 
\[
\widetilde{f}(x)
=
\begin{pmatrix}
0 & 1\\
0 & 0
\end{pmatrix}
\begin{pmatrix}
x_{1}\\
x_{2}
\end{pmatrix}
+
\sum_{n=2}^{\infty}
\sum_{k=0}^{n}
\alpha_{n,k}
\begin{pmatrix}
0 \\
x_{1}^{k}x_{2}^{n-k}
\end{pmatrix}
.
\]
One could call this the normal form in \(\ker \mult{\nM}\)-style, since the normal form terms are in \(\mathbb{R}[x_1,x_2]\otimes \ker{\nM}\).
Is this style in general equivalent to the \(\ker \conn{\bar{\nM}}\)-style? If so, it would certainly simplify the description problem of the general nilpotent normal form.
We will try to shed some light on the answer to this question in the following.
\end{rem}
If we apply the projection formula \eqref{eq:projection} on \(\ker \conn{\bar{\nM}}\) to the term \(\begin{pmatrix}
0 \\
x_2 F_1^1(x_{2})
\end{pmatrix}=|2,2||2\rangle
\)
we obtain
\bas
\pi_{\ker \conn{\bar{\nM}}} |2,2| |2\rangle&=&
\frac{1}{2}
\Join_{0} |1,1||2\rangle
\\&=&
\frac{1}{2} (|1,1||1\rangle+|2,2|2\rangle)=\frac{1}{2}\begin{pmatrix}
x_1 F_1^1(x_1 ) \\
x_2 F_1^1(x_2)
\end{pmatrix}
.
\eas
This leads us to the conclusion that for \(n=2\) we can identify \(\ker \conn{\bar{\nM}}\) with \(\mathbb{R}[x_1,x_2]\otimes\ker{\nM}\).
This last formulation of the normal form style has an interesting computational aspect: in order to find a normal form transformation it is enough to solve 
\(\varphi\) from the equation
\[
\mult{\nM}(\conn{\nN}\varphi(x)-F(x))=0
\]
at some fixed degree,
which looks a lot better than solving from
\[
\conn{\bar{\nM}}(\conn{\nN}\varphi(x)-F(x))=0,
\]
which  is the usual computational method.

We may conjecture that this holds in general, i.e. \(\ker \conn{\bar{\nM}}\equiv  \mathbb{R}[x_1,\ldots,x_n]\otimes  \ker{\nM}\).
Indeed, if we project it on \(\ker\mult{\nM}\) we find
\[
\pi_{\ker\mult{\nM}}\Join_{2n-2I+1-J}|1,J||n\rangle =
 \frac{(n-J-I+1)!}{(n-J)!} |I,I+J-1||n\rangle, \quad J=1,\ldots,n.
\]
This shows that \(\ker\mult{\nM}\equiv \ker \conn{\bar{\nM}}\) for irreducible \(\nN\).
We will show in Section \ref{sec:reducible} that this no longer holds in general for the reducible case, cf. Remark \ref{rem:kerm}.
\subsubsection{Example of a normal form calculation}
We give a very simple example, just to give the reader an impression how this might work in general.
The reader should keep in mind that this is more suitable for an automated calculation and not very convincing as a hand calculation.
Consider the map
\bas
 \begin{pmatrix} x_1\\ x_2\end{pmatrix}\mapsto \begin{pmatrix} x_2 + a_{11} x_1^2 +2 a_{12} x_1 x_2 +a_{22} x_2^2\\
b_{11} x_1^2 +2 b_{12} x_1 x_2 +b_{22} x_2^2
\end{pmatrix}.
\eas
We see that \(\nN\) acts as \(\nN |1\rangle=0\) and \(\nN |2\rangle=|1\rangle\).
And \(\nM\) acts as \(\nM |1\rangle=|2\rangle\) and \(\nM |2\rangle=0\).
Thus \(\ker\nM\) is spanned by \(|2\rangle\).
Furthermore \(\subs{\nN} |1,1|=|2,2|\), \(\subs{\nN} |1,2|=0\) and \(\subs{\nN} |2,2|=0\).
And \(\subs{\nM} |1,1|=0\), \(\subs{\nM} |1,2|=0\) and \(\subs{\nM} |2,2|=|1,1|\).
Thus \(\ker\subs{\nM}\) is spanned by \(|1,1|,|1,2|\).
In our example 
\bas
\nv^{(0)}_{1}&=&|2\rangle,\quad
\nv^{(1)}_{1}=\mult{\nN} |2\rangle=|1\rangle.
\eas
and 
\bas
\nw^{(0)}_{0}&=& |1,2|,\quad
\nw^{(0)}_{1}= |1,1|,\quad
\nw^{(1)}_{1}=\subs{\nN} |1,1|=|2,2|.
\eas
Now recall the inversion formula for the Clebsch-Gordan coefficients \cite{mokhtari2019equivariant}:
\bas
|1,2||2\rangle=\nw^{(0)}_{0}\otimes\nv^{(0)}_{1}&=&\begin{pmatrix} 1&0&1\\0&0&0\end{pmatrix}
\Join_{1}^{(0)}\nw_0\otimes\nv_1=\nw_0\otimes\nv_1,
\\
|1,2||1\rangle=\nw^{(0)}_{0}\otimes\nv^{(1)}_{1}&=&
\begin{pmatrix} 1&0&1\\1&0&1\end{pmatrix} \Join_{1}^{(1)}\nw_0\otimes\nv_1=\Join_{1}^{(1)}\nw_0\otimes\nv_1=\conn{\nN}\nw_0\otimes\nv_1,
\\
|1,1||2\rangle=\nw^{(0)}_{1}\otimes\nv^{(0)}_{1}&=&
\begin{pmatrix} 1&1&2\\0&0&0\end{pmatrix}
\Join_{2}^{(0)}\nw_1\otimes\nv_1
=\nw_1\otimes \nv_1,
\\
|1,1||1\rangle=\nw^{(0)}_{1}\otimes \nv^{(1)}_{1}&=&
\frac{1}{2}\begin{pmatrix} 1&1&2\\1&0&1\end{pmatrix} \Join_{2}^{(1)}\nv_1\otimes \nw_1\otimes\nv_1
+\frac{1}{2}\begin{pmatrix} 1&1&0\\1&0&0\end{pmatrix} \Join_{0}^{(0)}\nw_1\otimes\nv_1
\\&=&
\frac{1}{2} \conn{\nN}\nw_1\otimes\nv_1
-\frac{1}{2} \Join_{0}^{(0)}\nw_1\otimes\nv_1,
\\
|2,2||2\rangle=\nw^{(1)}_{1}\otimes\nv^{(0)}_{1}&=&
\frac{1}{2}\begin{pmatrix} 1&1&2\\0&1&1\end{pmatrix} \Join_{2}^{(1)} \nw_1\otimes\nv_1
+\begin{pmatrix} 1&1&0\\0&1&0\end{pmatrix} \Join_{0}^{(0)}\nw_1\otimes\nv_1
\\&=&
\frac{1}{2}\conn{\nN} \nw_1\otimes\nv_1
+ \Join_{0}^{(0)}\nw_1\otimes\nv_1,
\\
|2,2||1\rangle=\nw^{(1)}_{1}\otimes\nv^{(1)}_{1}&=&
\begin{pmatrix} 1&1&2\\1&1&2\end{pmatrix}\Join_{2}^{(2)}\nw_1\otimes\nv_1
=\frac{1}{2}\conn{\nN} \Join_{2}^{(1)}\nw_1\otimes\nv_1.
\eas
It follows that
\bas
&&+a_{11} |1,1||1\rangle
+ \frac{1}{2} a_{12} |1,2||1\rangle
+a_{22} |2,2||1\rangle 
+
b_{11} |1,1||2\rangle
+ \frac{1}{2} b_{12} |1,2||2\rangle
+b_{22} |2,2||2\rangle 
\\&=&
-\frac{1}{2}a_{11}  \Join_{0}\nw_1\otimes\nv_1
+b_{11} \nw_1\otimes\nv_1
+ \frac{1}{2} b_{12} \nw_0\otimes\nv_1
+b_{22} \Join_{0}\nw_1\otimes\nv_1
\\&&
+\frac{1}{2}\conn{\nN}(a_{11}  \nw_1\otimes\nv_1
+  a_{12} \nw_0\otimes\nv_1
+ a_{22}  \Join_{2}^{(1)}\nw_1\otimes\nv_1
+b_{22} \nw_1\otimes\nv_1).
\eas
The first order normal form is now
\bas
 \begin{pmatrix} x_1\\ x_2\end{pmatrix}\mapsto \begin{pmatrix} x_2\\0\end{pmatrix} +(b_{22} - \frac{1}{2}  a_{11})  \begin{pmatrix} x_1^2 \\x_2^2\end{pmatrix}
+b_{11}  \begin{pmatrix} 0\\x_1^2\end{pmatrix} 
+\frac{1}{2} b_{12}  \begin{pmatrix} 0\\x_1 x_2 \end{pmatrix}.
\eas
We see that the number of terms is half of that of the original map. In the \(n\)-dimensional case one would expect \({1}/{n}\) terms in the normal form, at least in the irreducible case.
\subsection{The three dimensional nilpotent normal form}
The transvectant in \eqref{eq:transvectant} now reads, with \(n=3\),
\[
\Join_{k-2p+1}|1,4-k||3\rangle =
\sum_{i+j=p} \frac{1}{i!j!}\frac{ \binom{p}{i} }{\binom{2}{i} \binom{k-1}{j}} |j+1,4-k+j||3-i\rangle, k=1,2,3, p=0,\ldots,k-1.
\]
We now do \(k=1,\ldots,3\) and \(p=0,\ldots,k-1\) (playing computer):
\newline
For \(k=1\) and \(p=0\) we have
\[
\Join_{2}|1,3||3\rangle =
 |1,3||3\rangle,
\]
corresponding to
\[
\begin{pmatrix} 0\\0\\x_1 x_2 x_3 F_3^0(x_1,x_2,x_3) \end{pmatrix}.
\]
For \(k=2\) and \(p=0\) we have
\[
\Join_{3}|1,2||3\rangle =
 |1,2||3\rangle,
\]
corresponding to
\[
\begin{pmatrix} 0\\0\\x_1 x_2  F_2^0(x_1,x_2) \end{pmatrix}.
\]
For \(k=3\) and \(p=0\) we have
\[
\Join_{4}|1,1||3\rangle =
 |1,1||3\rangle,
\]
corresponding to
\[
\begin{pmatrix} 0\\0\\x_1  F_1^0(x_1) \end{pmatrix}.
\]
For \(k=2\) and \(p=1\) we have
\[
\Join_{1}|1,2||3\rangle =
\sum_{i+j=1} \frac{ 1 }{\binom{2}{i} } |j+1,4-k+j||3-i\rangle= |2,3||3\rangle
+\frac{ 1 }{2 } |1,2||2\rangle,
\]
corresponding to
\[
\begin{pmatrix} 0\\\frac{1}{2} x_1 x_2 F_2^1(x_1,x_2) \\x_2 x_3 F_2^1(x_2,x_3) \end{pmatrix}.
\]
For \(k=3\) and \(p=1\) we have
\[
\Join_{2}|1,1||3\rangle =
\sum_{i+j=1} \frac{ 1}{\binom{2}{i} \binom{2}{j}} |j+1,4-k+j||3-i\rangle= \frac{1}{2} |2,2||3\rangle+\frac{ 1}{2}  |1,1||2\rangle,
\]
corresponding to
\[
\frac{1}{2} \begin{pmatrix} 0\\ x_1 F_1^1(x_1) \\x_2 F_1^1(x_2) \end{pmatrix}.
\]
For \(k=3\) and \(p=2\) we have
\[
\Join_{0}|1,1||3\rangle =
\sum_{i+j=2} \frac{1}{i!j!}\frac{ 1 }{ \binom{2}{j}} |j+1,4-k+j||3-i\rangle=
\frac{1}{2} |3,3||3\rangle
+\frac{ 1 }{ 2} |2,2||2\rangle
+\frac{1}{2} |1,1||1\rangle,
\]
corresponding to
\[
\frac{1}{2} \begin{pmatrix} x_1 F_1^2(x_1)\\ x_2 F_1^2(x_2) \\x_3 F_1^2(x_3) \end{pmatrix}.
\]
The normal form is now (where we ignore the superfluous \(\frac{1}{2}\)s):
\bas
F(x)
&= &
\begin{pmatrix}
0 & 1 & 0\\
0 & 0 & 1\\
0 & 0 & 0 
\end{pmatrix}
\begin{pmatrix}
x_{1}\\
x_{2}\\
x_{3}
\end{pmatrix}
+
\begin{pmatrix} 0\\0\\x_1 x_2 x_3 F_3^0(x_1,x_2,x_3) \end{pmatrix}
+
\begin{pmatrix} 0\\0\\x_1 x_2  F_2^0(x_1,x_2) \end{pmatrix}
\\&+&
\begin{pmatrix} 0\\0\\x_1  F_1^0(x_1) \end{pmatrix}
+
\begin{pmatrix} 0\\\frac{1}{2} x_1 x_2 F_2^1(x_1,x_2) \\x_2 x_3 F_2^1(x_2,x_3) \end{pmatrix}
+
\begin{pmatrix} 0\\ x_1 F_1^1(x_1) \\x_2 F_1^1(x_2) \end{pmatrix}
+
\begin{pmatrix} x_1 F_1^2(x_1)\\ x_2 F_1^2(x_2) \\x_3 F_1^2(x_3) \end{pmatrix}.
\eas
Notice that the versal deformation of the linear part is given by
\bas
\begin{pmatrix}
\alpha_1^2 & 1 & 0\\
\alpha_1^1 & \alpha_1^2 & 1\\
\alpha_1^0 & \alpha_1^1 & \alpha_1^2 
\end{pmatrix},
\eas
where the indices of \(\alpha\) correspond to those of \(F\). This is the same as in the vector field case since the map is linear.
\section{The reducible case}\label{sec:reducible}
The main tool we have been using so far, is the Clebsch-Gordan formula and its inversion formula.
This will work equally well in the reducible case, but the bookkeeping will get more complicated.
We therefore restrict ourself to case with two blocks but first give several examples and formulate a conjecture in Remark \ref{rem:kerm}, before proving the conjecture in the case of two blocks in Section \ref{sec:2blocks}.


We take here a direct approach to the computation of the normal form. 
In \cite[Chapter 12]{sanders2007averaging} it is shown how nilpotent normal forms of vector fields can be `added'. 
This may also be a viable approach for maps: the tensor products involved are much simpler here.
The main issue is to avoid relations in the normal form expression and that is exactly what the Clebsch-Gordan formula delivers.
\subsection{The $(2,3)$-reducible case}
In Theorem \ref{thm:reductive23} we give the explicit matrix \(\nN\).

In order to construct a general formula for the elements in  \(\ker\subs{\nM}\) we proceed as follows.
First we collect the generators of the kernel: \( u x_1,u^2 x_3,x_1x_2,x_1x_5, x_2x_3,u x_3x_4,x_3x_5\). These we multiply with \(\mathbb{R}[u x_1,x_2,u^2 x_3,u x_4,x_5]\).
Then we look for linear dependencies and if necessary, adapt the multiplying polynomial ring.
This corresponds 
to a decomposition of \(\ker\subs{\nM}\)
of type (with the \(u\) powers according to the \(\subs{\mathbf{h}}\)-eigenvalues)
\bas
\ker\subs{\nM}&=&x_1 x_2 \mathbb{R}[u x_1,x_2,u^2 x_3,u x_4,x_5]
\oplus
x_1 x_5 \mathbb{R}[u x_1,u^2x_3,ux_4,x_5]
\\&\oplus&
x_2 x_3 \mathbb{R}[x_2,u^2 x_3,u x_4,x_5]
\oplus
x_3 x_5 \mathbb{R}[u^2 x_3,u x_4,x_5]
\\&\oplus&u(x_1 \mathbb{R}[u x_1,u^2 x_3,u x_4]
\oplus x_3 x_4\mathbb{R}[u^2 x_3,u x_4])
\oplus u^2 x_3 \mathbb{R}[u^2 x_3].
\eas
This decomposition is computed as follows: we start with \(x_1 x_2 \mathbb{R}[ux_1,x_2,u^2 x_3,ux_4,x_5]\) and
define the set \(\mathcal{S}_1=\left\{ x_1 x_2, x_1 x_2 x_3,x_1 x_2 x_4,x_1 x_2 x_5\right\}\).
Then we consider \(x_1 x_5 \mathbb{R}[u x_1,x_2,u^2 x_3,u x_4,x_5]\) and construct the set \(\mathcal{E}_1=\left\{ x_1 x_5, x_1 x_2 x_5, x_1 x_3 x_5 ,x_1 x_4 x_5 x_4\right\}\).
We then take the intersection of \(\mathcal{S}_1\) and \(\mathcal{E}_1\), that is \(\mathcal{I}_1=\left\{ x_1 x_2 x_5\right\}\).
If this intersection is nonempty, we remove the variable that causes this from the list. In this case the problem is caused by \(x_2\) and so we continue with
\(x_1 x_5 \mathbb{R}[u x_1,u^2 x_3,u x_4,x_5]\) and put \(\mathcal{S}_2=\mathcal{S}_1\bigcup (\mathcal{E}_1\setminus \mathcal{I}_1)\).
	We then proceed with the next term \(x_2x_3\), \&c.

\begin{rem}
An alternative would be to compute the tensor product of \(\ker\subs{\nM}|\mathbb{R}[u x_1,x_2]\) and \(\ker\subs{\nM}|\mathbb{R}[u^2 x_3,u x_4,x_5]\).
This is a systematic procedure, but it leads to too many terms, which then have to be recombined to compactify the result,
a problem that is familiar from the vector field case, cf. \cite{herzog2009compute,murdock2015block,murdock2016box}.
\end{rem}
At the other side in the tensor product we have
(again, with the \(u\) powers according to the \(\mult{\mathbf{h}}\)-eigenvalues)
\bas
\ker\mult{\nM}&=&u|2\rangle \oplus u^2|5\rangle.
\eas
We now need to compute:
\bas
  \Join_0 |1,1||2\rangle&=&|2,2||2\rangle+|1,1||1\rangle, \\
\Join_1 |3,4||2\rangle&=&\frac{1}{2}|4,5||2\rangle+|3,4||1\rangle,\\
\Join_1 |1,1||5\rangle&=&|2,2||5\rangle+\frac{1}{2}|1,1||4\rangle,\\
\Join_2 |3,4||5\rangle&=&\frac{1}{2}|4,5||5\rangle+\frac{1}{2}|3,4||4\rangle,\\
\Join_1 |3,3||2\rangle&=&\frac{1}{2}|3,3||1\rangle+|4,4||2\rangle,\\
\Join_2 |3,3||5\rangle&=&\frac{1}{2}|3,3||4\rangle+\frac{1}{2}|4,4||5\rangle,\\
\Join_0 |3,3||5\rangle&=&\frac{1}{2}|5,5||5\rangle+\frac{1}{2}|4,4||4\rangle+\frac{1}{2}|3,3||3\rangle.
\eas
\begin{thm}\label{thm:reductive23}
The \((2,3)\)-reducible nilpotent has the following normal form:
\bas
F(x)&=&\begin{pmatrix} 0&1&0&0&0\\0&0&0&0&0\\0&0&0&1&0\\0&0&0&0&1\\0&0&0&0&0\end{pmatrix}
\begin{pmatrix}x_1\\x_2\\x_3\\x_4\\x_5\end{pmatrix}
+\begin{pmatrix}0\\x_1x_2 F_{1,2}^0 (x_1,x_2,x_3,x_4,x_5)\\0\\0\\x_1x_2 G_{1,2}^0 (x_1,x_2,x_3,x_4,x_5) \end{pmatrix}
\\&+&\begin{pmatrix}0\\x_1x_5 F_{1,5}^0 (x_1,x_3,x_4,x_5)\\0\\0\\x_1x_5 G_{1,5}^0 (x_1,x_3,x_4,x_5) \end{pmatrix}
+\begin{pmatrix}0\\x_2x_3 F_{2,3}^0 (x_2,x_3,x_4,x_5)\\0\\0\\x_2x_3 G_{2,3}^0 (x_2,x_3,x_4,x_5) \end{pmatrix}
+\begin{pmatrix}0\\x_3x_5 F_{3,5}^0 (x_3,x_4,x_5)\\0\\0\\x_3x_5 G_{3,5}^0 (x_3,x_4,x_5) \end{pmatrix}
\\&+&\begin{pmatrix}0\\x_1 F_{1}^0 (x_1,x_3,x_4)\\0\\0\\ x_1 G_{1}^0 (x_1,x_3,x_4)\end{pmatrix}
+\begin{pmatrix}x_1 F_{1}^1 (x_1,x_3,x_4)\\x_2 F_{1}^1 (x_2,x_4,x_5)\\0\\\frac{1}{2} x_1 G_{1}^1 (x_1,x_3,x_4)\\ x_2 G_{1}^1 (x_2,x_4,x_5)\end{pmatrix}
+\begin{pmatrix}0\\x_3 x_4 F_{4}^0 (x_3,x_4)\\0\\0\\ x_3 x_4 G_{4}^0 (x_3,x_4)\end{pmatrix}
\\&+&\begin{pmatrix}x_3 x_4 F_{4}^1 (x_3,x_4)\\\frac{1}{2}x_4 x_5 F_{4}^1 (x_4,x_5)\\0\\\frac{1}{2} x_3 x_4 G_{4}^1 (x_1,x_3,x_4)\\\frac{1}{2} x_4 x_5 G_{4}^1 (x_2,x_4,x_5)\end{pmatrix}
+\begin{pmatrix}0\\x_3 F_{3}^0(x_3)\\0\\0\\x_3 G_{3}^0 (x_3)\end{pmatrix}
+\begin{pmatrix}\frac{1}{2}x_3 F_{3}^1(x_3)\\x_4 F_{3}^1(x_4)\\0\\\frac{1}{2}x_3G_{3}^1(x_3)\\\frac{1}{2}x_4 G_{3}^1 (x_4)\end{pmatrix}
+\begin{pmatrix}0\\0\\\frac{1}{2}x_3G_{3}^2(x_3)\\\frac{1}{2}x_4 G_{3}^2 (x_4)\\\frac{1}{2}x_5G_3^2(x_5)\end{pmatrix}.
\eas
\end{thm}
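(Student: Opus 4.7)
The plan is to read off the right-hand side of the theorem from the Clebsch-Gordan decomposition of $\mathcal{P}^5$ under the two commuting $\Sl$-actions established in Section \ref{sec:sl2triple}: the multiplication action of $\langle \mult{\bar{\mathbf{n}}}, \mult{\bar{\mathbf{h}}}, \mult{\bar{\mathbf{m}}}\rangle$ on the vector factor and the substitution action of $\langle \subs{\bar{\mathbf{n}}}, \subs{\bar{\mathbf{h}}}, \subs{\bar{\mathbf{m}}}\rangle$ on the polynomial factor. The combined triple with $\cann{\bar{\mathbf{h}}} = \mult{\bar{\mathbf{h}}} + \subs{\bar{\mathbf{h}}}$ gives $\ker \conn{\bar{\mathbf{m}}}$ as a natural complement to $\im \conn{\mathbf{n}}$, and every irreducible summand of $\ker \conn{\bar{\mathbf{m}}}$ appears as the $\cann{\bar{\mathbf{h}}}$-orbit of a transvectant of top-weight vectors drawn from $\ker \subs{\mathbf{m}}$ and $\ker \mult{\mathbf{m}}$ (using Corollary \ref{cor:proj} to identify $\ker \subs{\bar{\mathbf{m}}}$ with $\ker \subs{\mathbf{m}}$ and similarly for the multiplication factor). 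The theorem is the enumeration of these summands.

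First I would verify the decomposition of $\ker \subs{\mathbf{m}}$ displayed immediately before the theorem. Seven generators $x_1x_2$, $x_1x_5$, $x_2x_3$, $x_3x_5$, $x_1$, $x_3x_4$, $x_3$ are each multiplied by an explicitly restricted polynomial ring; directness is enforced by the iterative subtraction procedure (compute the candidate monomial set $\mathcal{E}_j$, intersect with the already accepted set $\mathcal{S}_j$, and drop from the ring whichever variable causes a non-empty overlap). Completeness is a short monomial-by-monomial check: a monomial $x_1^{a_1}\cdots x_5^{a_5}$ lies in $\ker \subs{\mathbf{m}}$ precisely when no $x_2$ appears unaccompanied by $x_1$ and no $x_4,x_5$ appears unaccompanied by $x_3$; the seven pieces exhaust these possibilities by assigning each such monomial to the unique generator determined by its lowest-index factors. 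On the vector side, $\ker \mult{\mathbf{m}} = \mathbb{R}\langle |2\rangle, |5\rangle\rangle$ is immediate from the block structure of $\mathbf{m}$.

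Second, for each pair $(\nw,\nv)$ with $\nw$ a polynomial generator of $\subs{\bar{\mathbf{h}}}$-weight $k-1$ and $\nv \in \{|2\rangle,|5\rangle\}$ of $\mult{\bar{\mathbf{h}}}$-weight $n-1 \in \{1,2\}$, Clebsch-Gordan decomposes $\nw \otimes \nv$ into irreducibles with top-weight vectors $\Join_{k+n-2-2p}\,\nw\otimes\nv$ for $p=0,1,\ldots,\min(k-1,n-1)$; the nontrivial ones are exactly the seven transvectant identities listed just before the theorem, while the $p=0$ cases recover the bare tensor $\nw\otimes\nv$. Each such top-weight vector is then multiplied on the polynomial factor by an arbitrary element of the ring permitted by the $\ker\subs{\mathbf{m}}$-decomposition to sweep out the full isotypic component. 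Translating from the bra-ket notation, where $|I,J|$ encodes a monomial $x_I x_J F(x_I,\ldots,x_J)$ and $|K\rangle$ encodes $e_K$, reproduces each labeled block of the formula in the theorem statement.

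The main obstacle I foresee is bookkeeping. For every transvectant the arbitrary coefficient polynomial $F_\ast^\ast$ or $G_\ast^\ast$ must involve precisely the variables allowed by the $\ker \subs{\mathbf{m}}$-decomposition, or else the expression acquires hidden linear relations. A uniform safeguard is the Cushman-Sanders test cited in the introduction: compute the Poincar\'e series of the right-hand side in the polynomial degree grading and compare with the Poincar\'e series of $\ker \conn{\bar{\mathbf{m}}}$ derived from the Clebsch-Gordan decomposition of $\ker \subs{\mathbf{m}} \otimes \ker \mult{\mathbf{m}}$, whose irreducible dimensions are read off from the $\subs{\bar{\mathbf{h}}}$- and $\mult{\bar{\mathbf{h}}}$-eigenvalues of the generating top-weight vectors. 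Agreement at every degree establishes both completeness and the absence of linear dependencies, finishing the proof.
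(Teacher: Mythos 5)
Your overall route is the paper's: decompose $\ker\subs{\nM}$ into generators times restricted polynomial rings, tensor with $\ker\mult{\nM}=\Span\{e_2,e_5\}$, enumerate the irreducible summands via the listed transvectants, and then certify the bookkeeping with a generating-function (Cushman--Sanders) argument. However, the step you actually rely on is carried out with a wrong description of $\ker\subs{\nM}$. Since $(\subs{\nM}\varphi)(x)=\varphi(\nM x)$ and $\nM x=(0,x_1,0,x_3,x_4)$, a polynomial lies in $\ker\subs{\nM}$ exactly when it vanishes on $\im\nM=\Span\{e_2,e_4,e_5\}$, i.e.\ a monomial is in the kernel if and only if it is divisible by $x_1$ or by $x_3$. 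Your criterion (``no $x_2$ unaccompanied by $x_1$ and no $x_4,x_5$ unaccompanied by $x_3$'') is the kind of condition one writes for a derivation-type (vector-field) lowering operator, not for the substitution operator; it contradicts both the true kernel and your own generator list: $x_2x_3$, $x_1x_5$, $x_1x_4^k$ all lie in $\ker\subs{\nM}$ (and in the displayed pieces $x_2x_3\,\mathbb{R}[x_2,x_3,x_4,x_5]$, $x_1x_5\,\mathbb{R}[x_1,x_3,x_4,x_5]$, $x_1\,\mathbb{R}[x_1,x_3,x_4]$) yet fail your test. So the ``short monomial-by-monomial completeness check'' as you state it does not verify the decomposition on which the whole enumeration rests.

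The second problem is that your final safeguard cannot catch such an error. You propose to compare the Poincar\'e series of the right-hand side with a series for $\ker\conn{\bar{\nM}}$ that you derive by Clebsch--Gordan from the very same decomposition data; both sides are then computed from the claimed description of $\ker\subs{\nM}$, so the comparison is close to circular and only tests internal bookkeeping (variable restrictions and weights), not completeness. The paper's Cushman--Sanders test is aimed at an external benchmark: each kernel family of $\cann{\bar{\nH}}$-weight $l$ is weighted by $l+1$, the dimension of the irreducible it generates under $\conn{\bar{\nN}}$ --- concretely one forms $\frac{\partial}{\partial u}\,u\,G_{\conn{\bar{\nM}}}(u,t)\big|_{u=1}$ --- and the result must equal the independently known series $\frac{5}{(1-t)^5}$ of all of $\mathcal{P}^5$. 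Agreement with that benchmark, together with ``in the kernel by construction'' and the built-in absence of relations, is what proves the theorem; it would also have flagged any miscount coming from a faulty kernel description. To repair your argument, replace your membership criterion by ``divisible by $x_1$ or $x_3$'', redo the completeness check of the seven pieces against it, and run the weighted generating-function test against $5/(1-t)^5$ rather than against a kernel series derived from the same data.
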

\begin{rem}\label{rem:kerm}
This contradicts the \(\ker \mult{\nM}\) conjecture: no \(x_5^n e_2\) term generated.
The generating function of the kernel of \(\ker\conn{\bar{\nM}}\) is \(\frac{2}{(1-t)^5}-\frac{t}{1-t}\), the generating function of \(\ker \mult{\nM}\) is \(\frac{2}{(1-t)^5}\).
Apparently, the \(\frac{t}{1-t}\) stands for the missing powers of \(x_5\).
One might conjecture that in the \((n_1,\cdots,n_b)\)-reducible case the generating function of the kernel will be
\[
G_{\conn{\bar{\nM}}}(1,t)=\frac{b}{(1-t)^n}-\sum_{n_j<n_i} \sum_{k=1}^{n_i-n_j}\frac{t}{(1-t)^{k}},
\]
 or something along these lines.
The consequence of this last conjecture would be that the \(\ker\nM\)-conjecture would still hold when the blocks in the nilpotent are of equal size.
We will put this to the test in Section \ref{sec:22}.
\end{rem}
\begin{proof}
To check for computational errors, we apply the Cushman-Sanders test, see \cite{MR855083}:
\bas
G_{\conn{\bar{\nM}}}(u,t)&=& u+u^2 +\frac{(u+u^2)t^2}{(1-t)^5} 
+2\frac{(u+u^2)t^2}{(1-t)^4}
+\frac{(u+u^2)t^2}{(1-t)^3}
+\frac{(u^3+u^2+u+1)t}{(1-t)^3}
\\&+&\frac{(u^3+u^2+u+1)t^2}{(1-t)^2}
+\frac{(u^4+u^3+u^2+u+1)t}{(1-t)}
\eas
and it follows that
\bas
\frac{\partial}{\partial u} uG_{\conn{\bar{\nM}}}(u,t)|_{u=1}&=&5(1
+\frac{t^2}{(1-t)^5}
+\frac{2t^2}{(1-t)^4}
+\frac{t^2}{(1-t)^3}
+\frac{2t}{(1-t)^3}
+\frac{2t^2}{(1-t)^2}
+\frac{3t}{(1-t)}
)
\\&=& \frac{ 5}{(1-t)^5}.
\eas
In other words, the normal form passed the Cushman-Sanders test, its terms are linearly independent by construction and they lie in the kernel, also by construction.
This proves it is the \((2,3)\)-normal form in \(\ker\conn{\bar{\nM}}\)-style.
\end{proof}
\begin{cor}
The versal deformation is given by
\bas
\begin{pmatrix}
\alpha_1^1&1&\frac{1}{2}\alpha_3^1&{0}&0\\
\alpha_1^0&\alpha_1^1&\alpha_3^0&\alpha_3^1&{0}\\
0&0&\frac{1}{2}\beta_3^2&1&0\\
\frac{1}{2}\beta_1^1&0&\frac{1}{2}\beta_3^1&\frac{1}{2}\beta_3^2&1\\
\beta_1^0&\beta_1^1&\frac{1}{2}\beta_3^0&\frac{1}{2}\beta_3^1&\frac{1}{2}\beta_3^2\\
\end{pmatrix},
\eas
where the numbering of the \(10\) parameters \(\alpha\) and \(\beta\) corresponds to that of the \(F\) and \(G\), respectively.
\end{cor}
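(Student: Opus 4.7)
The plan is to read off the versal deformation directly from the normal form of Theorem \ref{thm:reductive23}, as is done in the vector field case and mentioned in the last sentence of each of the previous normal form sections. The key conceptual point (which I would state explicitly before computing) is that the versal deformation of the linear part of a map in normal form is given by the linear (degree-one) component of that normal form: unfolding the nilpotent linear part \(\nN\) by adding linear terms that lie in \(\ker \conn{\bar{\nM}}\) is equivalent, by the same homological argument used in the proof of Theorem \ref{thm:reductive23}, to choosing the degree-zero parts of the arbitrary polynomials \(F_\bullet^\bullet\) and \(G_\bullet^\bullet\). Since the homological operator acts trivially on constants and the transformation by a near-identity map is of the form \(\varphi\circ f\circ \varphi^{-1}\) with \(\varphi = \operatorname{id}+O(\|x\|^2)\), the linear part is preserved up to conjugation, so the versal deformation parametrisation is unique modulo such conjugation.

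Second, I would go through each summand of Theorem \ref{thm:reductive23} and extract the term obtained by replacing each polynomial \(F_\bullet^\bullet\), respectively \(G_\bullet^\bullet\), by the scalar parameter \(\alpha_\bullet^\bullet\), respectively \(\beta_\bullet^\bullet\). Only those summands whose monomial prefactor is of degree one contribute: concretely \(x_1 F_1^0\), \(x_1 F_1^1\) and \(x_2 F_1^1\), \(x_3 F_3^0\), \(x_3 F_3^1\) and \(x_4 F_3^1\), and analogously the \(G\)-block summands including \(x_3 G_3^2\), \(x_4 G_3^2\), \(x_5 G_3^2\). All quadratic prefactors (\(x_1x_2\), \(x_1 x_5\), \(x_2 x_3\), \(x_3 x_5\), \(x_3 x_4\)) automatically yield degree \(\ge 2\) and therefore do not contribute to the linear unfolding. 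Writing each contribution as a rank-one matrix \(c \cdot e_i x_j^\top\) and assembling them gives the claimed matrix entry by entry; the \(\tfrac12\) factors that appear in positions like \((1,3)\), \((4,1)\), \((4,3)\), \((4,4)\), \((5,4)\), \((5,5)\), and in the \(G_3^2\)-row come directly from the explicit \(\tfrac12\)-prefactors appearing in Theorem \ref{thm:reductive23}, which themselves arose from the transvectant coefficients \(\binom{p}{i}/(\binom{n-1}{i}\binom{k-1}{j})\).

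Third, I would verify linear independence and completeness of the \(10\) parameters. Linear independence is automatic from the construction in Theorem \ref{thm:reductive23}, which produced the normal form as a direct sum of Clebsch-Gordan summands with no linear relations, a fact already verified through the Cushman-Sanders test in the proof. Completeness follows because the linear-part count matches the dimension of \(\ker \conn{\bar{\nM}}\) restricted to \(\mathcal{P}^5_0\): there are exactly \(10\) linearly independent degree-one elements (four \(F\)-family and six \(G\)-family generators consistent with the first two summands of the Cushman-Sanders generating function at \(t=0\)).

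The only delicate point I would anticipate is bookkeeping the \(\tfrac12\) factors consistently, since some of them are absorbed into the polynomial while others sit outside, and the parameters \(\alpha_\bullet^\bullet,\beta_\bullet^\bullet\) are defined as the coefficients appearing in the matrix rather than as the values of the polynomials at zero. Once the convention is fixed (identify \(\alpha_\bullet^\bullet\), respectively \(\beta_\bullet^\bullet\), with the coefficient in front of the corresponding basis polynomial in the expansion of \(F_\bullet^\bullet\), respectively \(G_\bullet^\bullet\), precisely as indicated in the corollary's statement), the proof reduces to a direct tabulation, requiring no new ideas beyond those already established.
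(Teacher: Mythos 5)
Your approach coincides with the paper's: the corollary is stated without a separate proof, and the intended argument is exactly what you describe -- read off the linear part of the normal form family of Theorem \ref{thm:reductive23} by replacing each \(F_\bullet^\bullet\), \(G_\bullet^\bullet\) with its constant term, exactly as in the remarks after the \(2\)- and \(3\)-dimensional irreducible cases (``the versal deformation of the linear part \dots\ is the same as in the vector field case since the map is linear''). One small slip in your completeness count: the linear-prefactor summands supply four \(F\)-generators \((F_1^0,F_1^1,F_3^0,F_3^1)\) but only five \(G\)-generators \((G_1^0,G_1^1,G_3^0,G_3^1,G_3^2)\), i.e.\ \(9\) distinct parameters, which matches the coefficient of \(t\) in \(G_{\conn{\bar{\nM}}}(1,t)=\tfrac{2}{(1-t)^5}-\tfrac{t}{1-t}\) (namely \(10-1=9\)) and the number of distinct entries in the displayed matrix; the figure ``\(10\)'' is the paper's own count and your ``four plus six'' rationalisation of it is not correct, though this does not affect the tabulation that yields the matrix.
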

\subsection{The $(2,2)$-reducible case}\label{sec:22}
In Theorem \ref{thm:reductive22} we give the explicit matrix \(\nN\).
In order to construct a general formula for the elements in  \(\ker\subs{\nM}\) we proceed as follows.
First we collect the generators of the kernel: \( u x_1,u x_3,x_1x_2,x_1x_4, x_2x_3,x_3x_4\). These we multiply with \(\mathbb{R}[u x_1,x_2,u x_3,x_4]\).
Then we look for linear dependencies and if necessary, adapt the multiplying polynomial ring.
This corresponds 
to a decomposition of \(\ker\subs{\nM}\)
of type (with the \(u\) powers according to the \(\subs{\mathbf{h}}\)-eigenvalues)
\bas
\ker\subs{\nM}&=&x_1 x_2 \mathbb{R}[u x_1,x_2,u x_3,x_4]
\oplus
x_1 x_4 \mathbb{R}[u x_1,u x_3,x_4]
\\&\oplus&
x_2 x_3 \mathbb{R}[x_2,u x_3,x_4]
\oplus
x_3 x_4 \mathbb{R}[u x_3,x_4]
\\&\oplus&u(x_1 \mathbb{R}[u x_1,u x_3]
\oplus x_3\mathbb{R}[u x_3]).
\eas
We now need to compute:
\bas
  \Join_0 |1,1||2\rangle&=&|2,2||2\rangle+|1,1||1\rangle, \\
\Join_0 |3,3||2\rangle&=&|4,4||2\rangle+|3,3||1\rangle,\\
\Join_0 |1,1||4\rangle&=&|2,2||4\rangle+|1,1||3\rangle,\\
\Join_0 |3,3||4\rangle&=&|4,4||4\rangle+|3,3||3\rangle.
\eas
\begin{thm}\label{thm:reductive22}
The \((2,2)\)-reducible nilpotent has the following normal form:
\bas
F(x)&=&\begin{pmatrix} 0&1&0&0\\0&0&0&0\\0&0&0&1\\0&0&0&0\end{pmatrix}
\begin{pmatrix}x_1\\x_2\\x_3\\x_4\end{pmatrix}
+\begin{pmatrix}0\\x_1x_2 F_{1,2}^0 (x_1,x_2,x_3,x_4)\\0\\x_1x_2 G_{1,2}^0 (x_1,x_2,x_3,x_4) \end{pmatrix}
\\&+&\begin{pmatrix}0\\x_1x_4 F_{1,4}^0 (x_1,x_3,x_4)\\0\\x_1x_4 G_{1,4}^0 (x_1,x_3,x_4) \end{pmatrix}
+\begin{pmatrix}0\\x_2x_3 F_{2,3}^0 (x_2,x_3,x_4)\\0\\x_2x_3 G_{2,3}^0 (x_2,x_3,x_4) \end{pmatrix}
+\begin{pmatrix}0\\x_3x_4 F_{3,4}^0 (x_3,x_4)\\0\\x_3x_4 G_{3,4}^0 (x_3,x_4) \end{pmatrix}
\\&+&\begin{pmatrix}0\\x_1 F_{1}^0 (x_1,x_3)\\0\\ x_1 G_{1}^0 (x_1,x_3)\end{pmatrix}
+\begin{pmatrix}x_1 F_{1}^1 (x_1,x_3)\\x_2 F_{1}^1 (x_2,x_4)\\ x_1 G_{1}^1 (x_1,x_3)\\ x_2 G_{1}^1 (x_2,x_4)\end{pmatrix}
+\begin{pmatrix}0\\x_3 F_{3}^0 (x_3)\\0\\ x_3 G_{4}^0 (x_3)\end{pmatrix}
+\begin{pmatrix}x_3 F_{3}^1 (x_3)\\x_4 F_{3}^1(x_4)\\ x_3 G_{3}^1 (x_3)\\x_4G_3^1(x_4)\end{pmatrix}.
\eas
\end{thm}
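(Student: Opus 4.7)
The plan is to mirror the argument given for Theorem \ref{thm:reductive23}. The ingredients are all in place: a decomposition of \(\ker\subs{\mathbf{m}}\), a description of \(\ker\mult{\mathbf{m}}\), the transvectant identities listed immediately before the theorem, and the Cushman--Sanders generating-function test. What remains is to package them for the \((2,2)\)-case and verify the dimension count.

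First I would recall the decomposition of \(\ker\subs{\mathbf{m}}\) displayed in the excerpt. As in the \((2,3)\)-case, this is produced algorithmically from the generators \(x_1x_2,\ x_1x_4,\ x_2x_3,\ x_3x_4,\ ux_1,\ ux_3\): each generator is tentatively multiplied by \(\mathbb{R}[ux_1,x_2,ux_3,x_4]\), the candidate monomial sets are intersected, and variables that produce collisions are stripped off the polynomial ring of the later generator. Then I would note that \(\ker\mult{\mathbf{m}} = u|2\rangle \oplus u|4\rangle\). For each top-weight pair \(\nw_{k-1}\otimes\nv_1\) obtained from the generators on the two sides, I apply the transvectant at the level determined by the \(\cann{\bar{\mathbf{h}}}\)-eigenvalue, using the four identities
\[
\Join_0|1,1||2\rangle,\quad \Join_0|3,3||2\rangle,\quad \Join_0|1,1||4\rangle,\quad \Join_0|3,3||4\rangle,
\]
together with the trivial transvectants \(\Join_k|i,j||l\rangle = |i,j||l\rangle\) when the weight matches. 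Multiplying each transvectant by an arbitrary element of the polynomial ring sitting above the corresponding generator of \(\ker\subs{\mathbf{m}}\) produces the normal form terms as displayed.

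To see these terms lie in \(\ker\conn{\bar{\mathbf{m}}}\), note that by construction transvectants map into \(\ker\conn{\bar{\mathbf{m}}}\), and multiplication by elements of the invariant polynomial rings preserves the \(\conn{\bar{\mathbf{m}}}\)-kernel. Linear independence is also automatic: the generators of \(\ker\subs{\mathbf{m}}\) were chosen precisely so that no two of the polynomial rings give overlapping monomials. To check that nothing is missed, I would apply the Cushman--Sanders test. Summing the contributions from each piece, I would compute \(G_{\conn{\bar{\mathbf{m}}}}(u,t)\) and verify
\[
\left.\frac{\partial}{\partial u}\bigl(u\, G_{\conn{\bar{\mathbf{m}}}}(u,t)\bigr)\right|_{u=1} \;=\; \frac{4}{(1-t)^4},
\]
which is the generating function of vector polynomials on \(\mathbb{R}^4\).

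The main obstacle is the bookkeeping in the generating-function calculation: each of the eight families contributes a rational term in \(u\) and \(t\) whose numerators must be tracked with the right weights, and the sum must collapse (after the \(u\)-derivative at \(u=1\)) to \(\frac{4}{(1-t)^4}\). If this identity holds, then since the terms are linearly independent and sit in \(\ker\conn{\bar{\mathbf{m}}}\), the expression is the complete normal form in \(\ker\conn{\bar{\mathbf{m}}}\)-style. As a sanity check this should be consistent with Remark \ref{rem:kerm}: for equal block sizes \(n_1=n_2=2\) the conjectural correction sum is empty, so \(\ker\conn{\bar{\mathbf{m}}}\)-style should agree with \(\ker\mult{\mathbf{m}}\)-style here, a feature visible in the normal form as each component being a polynomial times a generator of \(\ker\nM\).
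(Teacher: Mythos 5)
Your proposal is correct and follows essentially the same route as the paper: build the candidate terms from the displayed decomposition of \(\ker\subs{\nM}\) and the four transvectants \(\Join_0\), note that kernel membership and linear independence hold by construction, and confirm completeness via the Cushman--Sanders test \(\frac{\partial}{\partial u}\bigl(u\,G_{\conn{\bar{\nM}}}(u,t)\bigr)\big|_{u=1}=\frac{4}{(1-t)^4}\), which is exactly what the paper's proof does (it writes out \(G_{\conn{\bar{\nM}}}(u,t)\) explicitly and checks the identity). One small wording caveat: the arbitrary polynomial is folded into the top-weight vector \emph{before} transvecting (which is why its arguments appear shifted, e.g.\ \(F_1^1(x_1,x_3)\) versus \(F_1^1(x_2,x_4)\)), rather than multiplying the finished transvectant by an element of those (non-invariant) weighted rings.
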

\begin{proof}
To check for computational errors, we apply the Cushman-Sanders \cite{MR855083} test:
\bas
G_{\conn{\bar{\nM}}}(u,t)&=& 2u+\frac{2 u t^2}{(1-t)^4}
+2\frac{ 2ut^2}{(1-t)^3}
+\frac{2ut^2}{(1-t)^2}
+2\frac{(u^2+1)t}{(1-t)^2}
+2\frac{(u^2+1)t}{(1-t)}
\eas
and it follows that
\bas
\frac{\partial}{\partial u} uG_{\conn{\bar{\nM}}}(u,t)|_{u=1}&=&4+\frac{4  t^2}{(1-t)^4}
+2\frac{ 4t^2}{(1-t)^3}
+\frac{4t^2}{(1-t)^2}
+2\frac{4t}{(1-t)^2}
+2\frac{4t}{(1-t)}
= \frac{ 4}{(1-t)^4}.
\eas
In other words, the normal form passed the Cushman-Sanders test, its terms are linearly independent by construction and they lie in the kernel, also by construction.
This proves it is the \((2,2)\)-normal form in \(\ker\conn{\bar{\nM}}\)-style.
\end{proof}
The versal deformation is:
\[
\begin{pmatrix} \alpha_1^1&1&\alpha_3^1&0\\\alpha_1^0&\alpha_1^1&\alpha_3^0&\alpha_3^1\\\beta_1^1&0&\beta_3^1&1\\\beta_1^0&\beta_1^1&\beta_3^0&\beta_3^1\end{pmatrix},
\]
where the numbering of the \(8\) parameters \(\alpha\) and \(\beta\) corresponds to that of the \(F\) and \(G\), respectively.

The generating function of the kernel is
\bas
G_{\conn{\bar{\nM}}}(1,t)&=& 2+\frac{2  t^2}{(1-t)^4}
+2\frac{ 2t^2}{(1-t)^3}
+\frac{2t^2}{(1-t)^2}
+2\frac{2t}{(1-t)^2}
+2\frac{2t}{(1-t)}
=\frac{2}{(1-t)^4}.
\eas
This confirms the conjecture made in Remark \ref{rem:kerm}, since this is the generating function of \(\ker\nM\).
\subsection{The $(k_1,k_2)$-reducible case}\label{sec:2blocks}
We assume \(k_1\leq k_2\).
In order to construct a general formula for the elements in  \(\ker\subs{\nM}\) we proceed as follows.
First we collect the generators of the kernel: 
\bas && u^{k_1-1} x_1, u^{k_1-2} x_1 x_2,\cdots x_1 x_{k_1}, u^{ k_2-1} x_1 x_{k_1+1} , \cdots, x_1 x_n , 
\\&&
u^{k_1-2} x_2x_{k_1+1}, \cdots, x_{k_1}x_{k_1+1}, u^{k_2-1} x_{k_1+1}, u^{k_2-2} x_{k_1+1} x_{k_1+2}, \cdots,x_{k_1+1}x_n.\eas
These we multiply with \(\mathbb{R}[x_1,\cdots,x_n]\).
Then we look for linear dependencies and if necessary, adapt the multiplying polynomial ring.
This corresponds
to a decomposition of \(\ker\subs{\nM}\)
of type (with the \(u\) powers according to the \(\subs{\nH}\)-eigenvalues),
where we have to take care that the arguments \(x_j\) in the polynomial ring have an eigenvalue \(\geq\) the eigenvalue of \(x_1x_{i}\):
\bas
\ker\subs{\bar{\nM}} &=&
\bigoplus_{i=0}^{k_1-2}u^{i} x_1 x_{k_1-i} \mathbb{R}[u^{k_1-1} x_1,\cdots,u^{i} x_{k_1-i},u^{k_2-1} x_{k_1+1},\cdots ,u^{i} x_{k_1+k_2-i}]
\\&\oplus&\bigoplus_{i=0}^{k_1-2}u^{i} x_1 x_{k_1+k_2-i} \mathbb{R}[u^{k_1-1} x_1,\cdots,u^{i+1}  x_{k_1-i-1} , u^{k_2-1} x_{k_1+1},\cdots ,u^{i} x_{k_1+k_2-i}]
\\&\oplus& u^{k_1-1} x_1 \mathbb{R}[u^{k_1-1} x_1,u^{k_2-1} x_{k_1+1},\cdots ,u^{k_1-1} x_{k_2+1}]
\\&\oplus&\bigoplus_{i=0}^{k_1-2}  u^{i} x_{k_1-i}x_{k_1+1} \mathbb{R}[u^{k_1-2}x_2,\cdots,u^{i}x_{k_1-i},u^{k_2-1}x_{k_1+1},\cdots,u^{i} x_{k_1+k_2-i}]
\\&\oplus&\bigoplus_{i=0}^{k_1-3}  u^{i} x_{k_1+1} x_{k_1+k_2-i}\mathbb{R}[u^{k_1-2}x_2,\cdots,u^{i+1}x_{k_1-i-1},u^{k_2-1} x_{k_1+1},\cdots,u^{i}x_{k_1+k_2-i}]
\\&\oplus&\bigoplus_{i=k_1-2}^{k_2-2}  u^{i} x_{k_1+1} x_{k_1+k_2-i}\mathbb{R}[u^{k_2-1} x_{k_1+1},\cdots,u^{i}x_{k_1+k_2-i}]
\\&\oplus& u^{k_2-1} x_{k_1+1} \mathbb{R}[u^{k_2-1} x_{k_1+1}].
\eas
Let \(\tau=1-t\). Then
\bas
G_{\subs{\bar{\nM}}}(t,u)&=&1
+\sum_{i=0}^{k_1-2}\frac{u^{i} t^2}{(1-t)^{k_1+k_2-2i}} 
+2\sum_{i=0}^{k_1-2}\frac{u^{i} t^2}{(1-t)^{k_1+k_2-2i-1}} 
+\sum_{i=0}^{k_1-3} \frac{ u^{i} t^2}{(1-t)^{k_1+k_2-2i-2}} 
\\&+&\sum_{i=k_1-2}^{k_2-2} \frac{ u^{i} t^2}{(1-t)^{k_2-i}}
+ \frac{u^{k_1-1} t}{(1-t)^{ k_2-k_1+2}}
+ \frac{u^{k_2-1} t}{1-t}
\\&=&
1
+(1+\tau)^2(1-\tau)^2\tau^{-k_1-k_2} \sum_{i=0}^{k_1-2}(u\tau^2)^{i} 
\\&&
+(1-\tau)^2 \tau^{-k_2} \sum_{i=k_1-1}^{k_2-2} u^{i} \tau^{i}
+ (1-\tau) \tau^{k_1-k_2-2} u^{k_1-1} 
+ (1-\tau)\tau^{-1} u^{k_2-1}
\\&=&
1
+(1-\tau^2)^2\tau^{-k_1-k_2}\frac{1-(u\tau^2)^{k_1-1}}{1-u\tau^2}  
\\&&
+(1-\tau)^2 \tau^{-k_2} \frac{(u\tau)^{k_1-1}-(u\tau)^{k_2-1}}{1-u\tau}
+ (1-\tau) \tau^{k_1-k_2-2} u^{k_1-1}
+ (1-\tau)\tau^{-1} u^{k_2-1},
\eas
where the \(1\) is added to the function to make the result look better, although it is of course not in \(\ker\subs{\bar{\nM}}\).
To check for computational errors, we apply the Cushman-Sanders test to the generating function of  \(\ker\subs{\bar{\nM}}\), see \cite{MR855083}:
\bas
\frac{\partial}{\partial u} u G_{\subs{\bar{\nM}}}(u,t)|_{u=1}&=&1
+(1-\tau^2)^2\tau^{-k_1-k_2} (\frac{1 - k_1 \tau^{2k_1-2}}{1-\tau^2}+\frac{\tau^2-\tau^{2k_1} }{(1-\tau^2)^2})
+k_1 (1-\tau) \tau^{k_1-k_2-2} 
\\&&
+(1-\tau)^2 \tau^{-k_2} ( \frac{k_1 \tau^{k_1-1} -k_2 \tau^{k_2-1}}{1-\tau}+\frac{\tau^{k_1}-\tau^{k_2}}{(1-\tau)^2})
+k_2 (1-\tau)\tau^{-1} 
\\&=&1
+(1-\tau^2)\tau^{-k_1-k_2} (1 - k_1 \tau^{2k_1-2})
+\tau^{-k_1-k_2} (\tau^2-\tau^{2k_1} )
+k_1 (1-\tau) \tau^{k_1-k_2-2}
\\&&
+(1-\tau) \tau^{-k_2}  (k_1 \tau^{k_1-1} -k_2 \tau^{k_2-1})
+\tau^{-k_2} (\tau^{k_1}-\tau^{k_2})
+k_2 (1-\tau)\tau^{-1}
\\&=&
+\tau^{-k_1-k_2} 
-k_1 (1-\tau^2)\tau^{k_1-k_2-2} 
-\tau^{k_1-k_2}
+k_1 (1-\tau) \tau^{k_1-k_2-2}
\\&&
+k_1 (1-\tau)\tau^{k_1-k_2-1} 
+\tau^{k_1-k_2} 
\\&=&
\tau^{-k_1-k_2}.
\eas
as it should be.
Furthermore,
\bas
G_{\ker\subs{\bar{\nM}}}(t)&=&
G_{\subs{\bar{\nM}}}(t,1)=
+(1-\tau^2)\tau^{-k_1-k_2}(1-\tau^{2k_1-2})
\\&&
+(1-\tau) (\tau^{k_1-k_2-1}-\tau^{-1})
+ (1-\tau) \tau^{k_1-k_2-2} 
+\tau^{-1} 
\\&=&1
+\tau^{-k_1-k_2}
-\tau^{-k_1-k_2+2}.
\eas
This is as it should be: all polynomials minus those without \(x_1\) and \(x_{k_1+1}\).
\begin{lem}
\bas
\sum_{i=m_1}^{m_2} (1+i) T^i&=&
\frac{(m_1+1)T^{m_1}-m_1T^{m_1+1}-(m_2+2)T^{m_2+1}+(m_2+1)T^{m_2+2}}{(1-T)^2}.
\eas
\end{lem}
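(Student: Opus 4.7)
The identity is a purely elementary closed form for a weighted geometric sum, so the plan is to reduce it to differentiation of a finite geometric series, which makes the algebra transparent and avoids any induction.

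First I would observe that $(1+i)T^i = \frac{d}{dT}T^{i+1}$, so that
\[
\sum_{i=m_1}^{m_2}(1+i)T^i \;=\; \frac{d}{dT}\sum_{i=m_1}^{m_2} T^{i+1} \;=\; \frac{d}{dT}\,\frac{T^{m_1+1}-T^{m_2+2}}{1-T},
\]
where the last equality uses the standard finite geometric sum. This is the only non-trivial step; after this, everything reduces to bookkeeping.

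Next I would apply the quotient rule, obtaining
\[
\frac{\bigl((m_1+1)T^{m_1}-(m_2+2)T^{m_2+1}\bigr)(1-T) + \bigl(T^{m_1+1}-T^{m_2+2}\bigr)}{(1-T)^2},
\]
and then expand the numerator. The $T^{m_1+1}$ terms combine to $-(m_1+1)T^{m_1+1}+T^{m_1+1} = -m_1 T^{m_1+1}$, and the $T^{m_2+2}$ terms combine to $(m_2+2)T^{m_2+2}-T^{m_2+2} = (m_2+1)T^{m_2+2}$. This yields exactly the right-hand side of the claimed identity.

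The only mild subtlety is the edge case $m_1 > m_2$ (empty sum) or $T=1$, but the formula is a polynomial identity in $T$ after clearing denominators, so these cases pose no obstruction: one can either verify them separately by inspection or simply note that both sides are rational functions of $T$ that agree on a dense set. I do not expect any real obstacle here; the work is entirely bookkeeping once the derivative trick is in place.
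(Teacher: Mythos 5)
Your proof is correct, and it is essentially the same argument the paper has in mind: the paper's proof is stated simply as a direct computation, and its (suppressed) calculation is precisely your differentiation of the finite geometric series $\frac{d}{dT}\frac{T^{m_1+1}-T^{m_2+2}}{1-T}$ followed by combining the $T^{m_1+1}$ and $T^{m_2+2}$ terms. Nothing further is needed.
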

\begin{proof}
Direct computation.
\end{proof}
At the other side in the tensor product we have
(again, with the \(u\) powers according to the \(\mult{\nH}\)-eigenvalues)
\bas
\ker\mult{\nM}&=&u^{k_1-1}|k_1\rangle \oplus u^{k_2-1}|k_1+k_2\rangle.
\eas
We recall the Clebsch-Gordan formula \(u^i \otimes u^k=\sum_{j=0}^{\min(i,k)} u^{i+k-2j}\), so that the result is \(\min(i,k)+1\) when evaluated in \(u=1\).
\bas
G_{\conn{\bar{\mathfrak{m}}}}(t,1)&=&G(u,t)\otimes (u^{k_1-1}+u^{k_2-1})|_{u=1} 
\\&=&2
+2(1+\tau)^2(1-\tau)^2\tau^{-k_1-k_2} \sum_{i=0}^{k_1-2}(i+1)\tau^{2i}
\\&&
+(1-\tau)^2 \tau^{-k_2} \sum_{i=k_1-1}^{k_2-2} (k_1 + i+1) \tau^{i}
+2 (1-\tau) \tau^{k_1-k_2-2} k_1
+ (1-\tau)\tau^{-1} (k_1 +k_2)
\\&=&2
+2\tau^{-k_1-k_2}(1-k_1\tau^{2k_1-2}+(k_1-1)\tau^{2k_1})
\\&&
+k_1(1-\tau) \tau^{-k_2}  (\tau^{k_1-1} -\tau^{k_2-1})
\\&&
+\tau^{-k_2} (k_1\tau^{(k_1-1)}-(k_1-1)\tau^{k_1}-k_2\tau^{k_2-1}+(k_2-1)\tau^{k_2})
\\&&
+2 (1-\tau) \tau^{k_1-k_2-2} k_1
+ (1-\tau)\tau^{-1} (k_1 +k_2)
\\&=&
2\tau^{-k_1-k_2}
-\tau^{k_1-k_2}
+1
\\&=&
2\tau^{-k_1-k_2}
-\sum_{i=1}^{k_2-k_1}\frac{t}{\tau^{i}},
\eas
as conjectured in  Remark \ref{rem:kerm}, restricted to two blocks.

We will not write out the explicit normal form, but this is not too hard using the expressions for the transvectants.
\section{Concluding remarks}
One of the ideas one might get from this paper is that the extension of a nilpotent action into an \(\Sl\)-triple
leads to constructions that hardly need the triple in the actual computations. It is, as if the existence of the triple is enough,
and this would readily follow from the finite dimensionality of the representations and the Jacobson-Morozov theorem.
This impression is created by the fact that the \(\Sl\)-equivariant decomposition is given automatically if the nilpotent is in Jordan normal form and therefore
acts as a simple shift operator on the monomials.
It is this simplicity that allows us to give such general result on the normal form of maps with nilpotent linear part,
	much more general than is possible in the corresponding vector field case.
	\bibliographystyle{plain}

\begin{thebibliography}{10}

\bibitem{MR1735239}
Guoting Chen and Jean Della~Dora.
\newblock Normal forms for differentiable maps near a fixed point.
\newblock {\em Numer. Algorithms}, 22(2):213--230, 1999.

\bibitem{MR855083}
Richard Cushman and Jan~A Sanders.
\newblock Nilpotent normal forms and representation theory of {$\mathfrak{
  sl}(2,{\mathbb R})$}.
\newblock In {\em Multiparameter bifurcation theory ({A}rcata, {C}alif.,
  1985)}, volume~56 of {\em Contemp. Math.}, pages 31--51. Amer. Math. Soc.,
  Providence, RI, 1986.

\bibitem{cushman1988splitting}
Richard Cushman and Jan~A Sanders.
\newblock Splitting algorithm for nilpotent normal forms.
\newblock {\em Dynamics and Stability of systems}, 2(3-4):235--246, 1988.

\bibitem{cushman1988normal}
Richard Cushman, Jan~A Sanders, and Neil White.
\newblock Normal form for the (2; n)-nilpotent vector field, using invariant
  theory.
\newblock {\em Physica D: Nonlinear Phenomena}, 30(3):399--412, 1988.

\bibitem{gazor2013volume}
Majid Gazor and Fahimeh Mokhtari.
\newblock Volume-preserving normal forms of {H}opf-zero singularity.
\newblock {\em Nonlinearity}, 26(10):2809, 2013.

\bibitem{gazor2014normal}
Majid Gazor and Fahimeh Mokhtari.
\newblock Normal forms of {H}opf-zero singularity.
\newblock {\em Nonlinearity}, 28(2):311, 2014.

\bibitem{gazor2013normal}
Majid Gazor, Fahimeh Mokhtari, and Jan~A Sanders.
\newblock Normal forms for {H}opf-zero singularities with nonconservative
  nonlinear part.
\newblock {\em Journal of Differential Equations}, 254(3):1571--1581, 2013.

\bibitem{gramchev2005normal}
Todor Gramchev and Sebastian Walcher.
\newblock Normal forms of maps: formal and algebraic aspects.
\newblock {\em Acta Applicandae Mathematica}, 87(1-3):123--146, 2005.

\bibitem{herzog2009compute}
J{\"u}rgen Herzog, Marius Vladoiu, and Xinxian Zheng.
\newblock How to compute the {S}tanley depth of a monomial ideal.
\newblock {\em Journal of Algebra}, 322(9):3151--3169, 2009.

\bibitem{humphreys2012introduction}
James~E Humphreys.
\newblock {\em Introduction to {L}ie algebras and representation theory},
  volume~9.
\newblock Springer Science \& Business Media, 2012.

\bibitem{kuznetsov2004elements}
Yuri Kuznetsov.
\newblock {\em Elements of Applied Bifurcation Theory}.
\newblock Applied Mathematical Sciences. Springer New York, 2004.

\bibitem{kuznetsov2019numerical}
Yuri~A Kuznetsov and Hil~GE Meijer.
\newblock {\em Numerical Bifurcation Analysis of Maps}, volume~34.
\newblock Cambridge University Press, 2019.

\bibitem{mokhtari2019equivariant}
Fahimeh Mokhtari and Jan~A Sanders.
\newblock Equivariant decomposition of polynomial vector fields.
\newblock {\em arXiv preprint arXiv:1909.06955}, 2019.

\bibitem{mokhtari2019versal}
Fahimeh Mokhtari and Jan~A Sanders.
\newblock Versal normal form for nonsemisimple singularities.
\newblock {\em Journal of Differential Equations}, 267(5):3083--3113, 2019.

\bibitem{murdock2006normal}
James Murdock.
\newblock {\em Normal forms and unfoldings for local dynamical systems}.
\newblock Springer Science \& Business Media, 2006.

\bibitem{murdock2016box}
James Murdock.
\newblock Box products in nilpotent normal form theory: {T}he factoring method.
\newblock {\em Journal of Differential Equations}, 260(2):1010--1077, 2016.

\bibitem{murdock2015block}
James Murdock and Theodore Murdock.
\newblock Block {S}tanley decompositions {I}. {E}lementary and gnomon
  decompositions.
\newblock {\em Journal of Pure and Applied Algebra}, 219(6):2189--2205, 2015.

\bibitem{roell2020}
Ernst R\"oell.
\newblock Nilpotent normal forms for maps.
\newblock Master's thesis, Utrecht University, 2020.

\bibitem{sanders2007averaging}
Jan~A Sanders, Ferdinand Verhulst, and James~A Murdock.
\newblock {\em Averaging methods in nonlinear dynamical systems}, volume~59.
\newblock Springer, 2007.

\bibitem{wang2008further}
Duo Wang, Min Zheng, and Jianping Peng.
\newblock Further reduction of normal forms and unique normal forms of smooth
  maps.
\newblock {\em International Journal of Bifurcation and Chaos},
  18(03):803--825, 2008.

\end{thebibliography}

\end{document}